\newtheorem{thm}{Theorem}[section]
\newtheorem*{mthm}{Theorem I}
\newtheorem*{mthm2}{Theorem II}
\newtheorem{cor}[thm]{Corollary}
\newtheorem{lem}[thm]{Lemma}
\newtheorem{prop}[thm]{Proposition}
\newtheorem*{thm*}{Theorem}
\newtheorem*{cor*}{Corollary}
\theoremstyle{definition}
\newtheorem{dfn}[thm]{Definition}
\newtheorem{rem}[thm]{Remark}
\newtheorem{ques}[thm]{Question}
\newtheorem{conj}[thm]{Conjecture}
\newtheorem{conv}[thm]{Convention}
\newtheorem*{claim*}{Claim}
\newtheorem{nota}[thm]{Notation}
\numberwithin{equation}{thm}
\def\depth{\operatorname{{\sf depth}}}
\def\CIdim{\operatorname{{\sf CIdim}}}
\def\Gdim{\operatorname{{\sf Gdim}}}
\def\dim{\operatorname{{\sf dim}}}
\def\size{\operatorname{\mathsf{size}}}
\def\rank{\operatorname{\mathsf{rank}}}
\def\radi{\operatorname{{\sf radius}}}
\def\radius{\operatorname{{\sf radius}}}
\def\height{\operatorname{{\sf ht}}}
\def\len{\operatorname{{\sf length}}}
\def\pd{\operatorname{{\sf pd}}}
\def\LL{\operatorname{{\sf \ell\ell}}}
\def\cx{\operatorname{{\sf cx}}}
\def\codim{\operatorname{{\sf codim}}}
\def\mult{\operatorname{{\sf mult}}}
\def\res{\operatorname{res}}
\def\thick{\operatorname{thick}}
\def\add{\operatorname{add}}
\def\mod{\operatorname{mod}}
\def\CM{\operatorname{CM}}
\def\TR{\operatorname{\mathcal{G}}}
\def\uCM{\operatorname{\underline{CM}}}
\def\cm{\operatorname{\mathsf{CM}}}
\def\Spec{\operatorname{Spec}}
\def\Sing{\operatorname{Sing}}
\def\NF{\operatorname{NF}}
\def\Min{\operatorname{\mathsf{Min}}}
\def\spec{\operatorname{\mathsf{Spec}}}
\def\sing{\operatorname{\mathsf{Sing}}}
\def\ZZ{\mathbb{Z}}
\def\N{\mathbb{N}}
\def\C{\mathcal{C}}
\def\A{\mathcal{A}}
\def\X{\mathcal{X}}
\def\Y{\mathcal{Y}}
\def\Z{\mathcal{Z}}
\def\m{\mathfrak{m}}
\def\n{\mathfrak{n}}
\def\p{\mathfrak{p}}
\def\q{\mathfrak{q}}
\def\r{\mathfrak{r}}
\def\a{\mathfrak{a}}
\def\v{\mathsf{V}}
\def\syz{\mathrm{\Omega}}
\def\tr{\mathrm{Tr}}
\def\Ext{\operatorname{Ext}}
\def\Hom{\operatorname{Hom}}
\newcommand{\ses}[3]{0 \to {#1} \to {#2} \to {#3} \to 0}
\def\uac{{\bf (UAC)}}
\def\Ann{\operatorname{Ann}}
\def\im{\operatorname{Im}}
\begin{document}
\setlength{\baselineskip}{15pt}
\title{The radius of a subcategory of modules}
%\date{at\ \digitalhour:\digitalminute\ (GMT-05:00)\ on\ \today}
\author{Hailong Dao}
\address{Department of Mathematics, University of Kansas, Lawrence, KS 66045-7523, USA}
\email{hdao@math.ku.edu}
\urladdr{http://www.math.ku.edu/~hdao/}
\author{Ryo Takahashi}
\address{Graduate School of Mathematics, Nagoya University, Furocho, Chikusaku, Nagoya 464-8602, Japan/Department of Mathematics, University of Nebraska, Lincoln, NE 68588-0130, USA}
\email{takahashi@math.nagoya-u.ac.jp}
\urladdr{http://www.math.nagoya-u.ac.jp/~takahashi/}
\dedicatory{Dedicated to Professor Craig Huneke on the occasion of his sixtieth birthday}
\thanks{2010 {\em Mathematics Subject Classification.} Primary 13C60; Secondary 13C14, 16G60, 18E30}
\thanks{{\em Key words and phrases.} resolving subcategory, thick subcategory, Cohen-Macaulay module, complete intersection, dimension of triangulated category, Cohen-Macaulay representation type}
\thanks{The first author was partially supported by NSF grant DMS 0834050. The second author was partially supported by JSPS Grant-in-Aid for Young Scientists (B) 22740008 and by JSPS Postdoctoral Fellowships for Research Abroad}
\begin{abstract}
We introduce a new invariant for subcategories $\X$ of finitely generated modules over a local ring $R$ which we call the radius of $\X$.
We show that if $R$ is a complete intersection and $\X$ is resolving, then finiteness of the radius forces $\X$ to contain only maximal Cohen-Macaulay modules. We also show that the category of maximal Cohen-Macaulay modules has finite radius when $R$ is a Cohen-Macaulay complete local ring with perfect coefficient field. 
We link the radius to many well-studied notions such as the dimension of the stable category of maximal Cohen-Macaulay modules, finite/countable Cohen-Macaulay representation type and the uniform Auslander condition.
\end{abstract}
\maketitle
%\tableofcontents
%%%%%%%%%%%%%%%%%%%%%%%%%%%%%%%%%%%%%%%%%%%%%%%%%%%%%%%%
\section*{Introduction}

Let $R$ be a commutative Noetherian local ring and $\mod R$  the category of finitely generated modules over $R$.
In this paper we introduce and study a new invariant for subcategories $\X$ of $\mod R$ which we call the radius of $\X$.
Roughly speaking, it is defined as the least number of extensions necessary to build the whole objects in $\X$ out of a single object in $\mod R$.
(For the precise definition, see Definition \ref{defrad} in this paper.)
Our definition is inspired by the notion of dimension of triangulated categories that was introduced by Rouquier in \cite{R}.

We obtain strong evidences that the concept of radius is intimately linked to both the representation theory and the singularity of $R$.
For example, over a Gorenstein complete local ring $R$, the category of maximal Cohen-Macaulay modules has radius zero if and only if $R$ has finite Cohen-Macaulay representation type, in other words, $R$ is a simple hypersurface singularity (when $R$ has an algebraically closed coefficient field of characteristic zero).
In addition, the category of maximal Cohen-Macaulay modules over a complete local hypersurface (over an algebraically closed field of characteristic not two) of countable Cohen-Macaulay representation type has radius one.
We also observe a tantalizing connection to the uniform Auslander condition, which has attracted researchers over the years.

Perhaps most surprisingly, one of the corollaries of our first main result (Theorem \ref{3}) states:

\begin{mthm}
Let $R$ be a local complete intersection, and let $\X$ be a resolving subcategory of $\mod R$. If the radius of $\X$ is finite, then $\X$ contains only maximal Cohen-Macaulay modules.
\end{mthm}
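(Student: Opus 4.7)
My approach is by contradiction. Assume $\radius \X = r < \infty$, realized by some $G \in \mod R$, and suppose some $M \in \X$ has $\depth M = t < \depth R = d$. After replacing $R$ by its completion (which preserves being a complete intersection, being resolving, and having finite radius), I may assume $R$ is complete with a coefficient field. Since $\X$ is resolving, all syzygies $\syz^i M$ lie in $\X$, as do arbitrary Yoneda extensions built from them together with $R$; this gives a lot of room to manufacture new objects of $\X$ carrying any cohomological obstruction coming from $M$.

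The engine I would rely on is the paper's first main result, Theorem \ref{3}, which I expect converts the hypothesis $\radius \X \le r$ into a uniform cohomological bound on objects of $\X$: the existence of some $C \in \mod R$ and an integer $n$ such that, for every $X \in \X$, a suitable $\Ext$ or $\Tor$ group between $C$ and $X$ in degree $\ge n$ either vanishes or is annihilated by a prescribed ideal independent of $X$. Such a statement is the natural module-category analogue of the Rouquier ghost-map bound for the dimension of a triangulated category: a long enough composition of ``ghost'' maps in $\X$ — maps killed by $\Hom(G,-)$ — must be zero on every object built from $G$ in $r+1$ extension steps, forcing iterated cohomology classes to collapse.

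The remaining step is to combine this uniform bound with the complete intersection hypothesis to force every $X \in \X$ to be MCM. Over a CI, Avramov--Buchweitz theory makes $\Ext^*_R(X, Y)$ a finitely generated module over the polynomial ring in the Eisenbud cohomology operators, so a uniform bound from Theorem \ref{3} constrains support varieties in a controlled way. On the other hand, a non-MCM $M$ produces nontrivial classes in $\Ext^{d-t}_R(k, M)$ via local duality over the Cohen--Macaulay ring $R$, and these classes propagate through the resolving closure of $M$ — by iterating syzygies and splicing extensions — yielding modules in $\X$ whose cohomology outgrows any admissible bound. The main obstacle I anticipate is exactly this propagation: confirming that the CI structure together with resolving closure amplifies a single non-MCM obstruction into cohomological data rich enough to violate whatever uniform control Theorem \ref{3} delivers. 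A natural reduction that might simplify matters is to cut down by a system of parameters or descend along $\syz$ to reduce to the cleaner assertion ``$k \in \X$ forces $R$ to be regular,'' which is intuitively the cleanest case finite radius should rule out over a non-regular CI.
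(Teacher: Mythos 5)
There is a genuine gap: your sketch never supplies the mechanism that actually derives a contradiction, and the one concrete reduction you propose does not work. First, note that the statement you are proving is, in the paper, an immediate consequence of Theorem \ref{3}(4): if $M\in\X$ is not maximal Cohen--Macaulay and $R$ is a complete intersection, then $0<\CIdim_RM=\depth R-\depth_RM<\infty$, and Theorem \ref{3}(4) says outright that $\radi\X=\infty$. You cite Theorem \ref{3} but misdescribe it as delivering ``a uniform cohomological bound'' on objects of $\X$ --- that is really the content of one of its ingredients (Lemma \ref{b}, the annihilator estimate for $\Ext_S^i(-,N)$ along balls $[C]_n$), not of the theorem itself --- and you then propose additional machinery (support varieties, local duality, ghost maps) to close a loop that the theorem, correctly quoted, already closes. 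So either your proof is a two-line citation, or the burden is to prove Theorem \ref{3}(4), and your sketch does neither. The substantive steps you are missing are: (a) the reducible-complexity argument (Lemma \ref{main3}, via Bergh and Avramov--Gasharov--Peeva) showing that a resolving subcategory containing a module of finite positive CI-dimension contains a module of finite positive \emph{projective} dimension; (b) Proposition \ref{14}, which from such a module manufactures an explicit unbounded family in $\X$ --- one reduces to a module of projective dimension one, locally free on the punctured spectrum, with $\Ext_R^1(M,R)\cong k$, deduces $\tr k\in\X$ up to free summands, and then $\tr_R(R/\m^i)\in\X$ for every $i$; and (c) the contradiction itself, obtained by passing to a Gorenstein cover $S\twoheadrightarrow\widehat R$ and showing via Lemma \ref{b} that finiteness of the radius would force $\Ann_S\Hom_S(\widehat R,S)$ to contain a fixed product of annihilators of $\Ext_S^{\ge1}(\widehat R,S)$ and $\Ext_S^{\ge1}(\widehat C,S)$, which fails after localizing at a minimal prime of $S$ containing the kernel of $S\to\widehat R$. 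None of this is the Avramov--Buchweitz support-variety theory you gesture at, and the ``propagation'' you correctly identify as the main obstacle is exactly the content of (b).

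Second, the proposed fallback reduction to ``$k\in\X$ forces $R$ to be regular'' is not available. A resolving subcategory containing a non-maximal-Cohen--Macaulay module need not contain $k$ (for instance, the modules of finite projective dimension over a singular Cohen--Macaulay ring form a resolving subcategory containing non-MCM modules but not $k$), so you cannot descend to that case. The case $k\in\X$ is handled in the paper by a different and easier argument (Theorem \ref{t1}, via Loewy lengths of $\m$-torsion submodules), corresponding to part (1) of Theorem \ref{3}, not part (4), and it proves ``finite radius and $k\in\X$ imply $\dim R=0$,'' not regularity.
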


We conjecture that the above result holds for all Cohen-Macaulay local rings. Our second main result below supports this conjecture, which follows from a more general results (Theorems \ref{88} and \ref{13}).

\begin{mthm2}
Let $R$ be a Cohen-Macaulay complete local ring with perfect coefficient field.
Then the category of maximal Cohen-Macaulay modules over $R$ has finite radius.
\end{mthm2}

The structure of the paper is as follows.
In Section \ref{prelim} we set the basic notations and definitions.
Section \ref{defisec} contains our key definition (Definition \ref{keydef}) of the radius of a subcategory of $\mod R$, as well as some detailed comparisons to similar notions.
We also give several results connecting the radius to the singularities of finite and countable Cohen-Macaulay representation type.
Sections \ref{mainsec} and \ref{proofsec} consist of the statement and proof of our main Theorem I, respectively.
We also discuss here thickness of resolving subcategories of maximal Cohen-Macaulay modules over a complete intersection. Section \ref{section5} contains the proof of (generalizations of) our main Theorem II.
Section \ref{appsec} connects  the main results  to  the uniform Auslander condition and discuss some open questions.

%%%%%%%%%%%%%%%%%%%%%%%%%%%%%%%%%%%%%%%%%%%%%%%%%%%%%%%%
\section{Preliminaries}\label{prelim}

In this section, we recall the definitions of a resolving subcategory, totally reflexive modules and a thick subcategory.
We begin with our convention.

\begin{conv}
Throughout this paper, we assume all rings are commutative Noetherian rings with identity.
All modules are finitely generated.
All subcategories are full and strict.
(Recall that a subcategory $\X$ of a category $\C$ is called {\it strict} provided that for objects $M,N\in\C$ with $M\cong N$, if $M$ is in $\X$, then so is $N$.)
Hence, the {\em subcategory} of a category $\C$ consisting of objects $\{M_\lambda\}_{\lambda\in\Lambda}$ always mean the smallest strict full subcategory of $\C$ to which $M_\lambda$ belongs for all $\lambda\in\Lambda$.
Note that this coincides with the full subcategory of $\C$ consisting of all objects $X\in\C$ such that $X\cong M_\lambda$ for some $\lambda\in\Lambda$.
Let $R$ be a (commutative Noetherian) ring.
Denote by $\mod R$ the category of (finitely generated) $R$-modules and $R$-homomorphisms.
For a Cohen-Macaulay local ring $R$, we call a maximal Cohen-Macaulay $R$-module just a Cohen-Macaulay $R$-module.
We denote by $\CM(R)$ the subcategory of $\mod R$ consisting of Cohen-Macaulay $R$-modules.
\end{conv}

The following notation is used throughout this paper.

\begin{nota}
For a subcategory $\X$ of $\mod R$, we denote by $\add\X$ (or $\add_R\X$) the {\em additive closure} of $\X$, namely, the subcategory of $\mod R$ consisting of direct summands of finite direct sums of modules in $\X$.
When $\X$ consists of a single module $M$, we simply denote it by $\add M$ (or $\add_RM$).
For an $R$-module $M$, we denote by $M^*$ the $R$-dual module $\Hom_R(M,R)$.
For a homomorphism $f:M\to N$ of $R$-modules, $f^*$ denotes the $R$-dual homomophism $N^*\to M^*$ sending $\sigma\in N^*$ to the composition $\sigma\cdot f\in M^*$.
\end{nota}

The notion of a resolving subcategory has been introduced by Auslander and Bridger \cite{AB}.
It can actually be defined for an arbitrary abelian category with enough projective object.
The only resolving subcategories we deal with in this paper are ones of $\mod R$.

\begin{dfn}
A subcategory $\X$ of $\mod R$ is called {\it resolving} if the following hold.
\begin{enumerate}[(R1)]
\item
$\X$ contains the projective $R$-modules.
\item
$\X$ is closed under direct summands: if $M$ is an $R$-module in $\X$ and $N$ is an $R$-module that is a direct summand of $M$, then $N$ is also in $\X$.
\item
$\X$ is closed under extensions: for an exact sequence $0 \to L \to M \to N \to 0$ of $R$-modules, if $L,N$ are in $\X$, then so is $M$.
\item
$\X$ is closed under kernels of epimorphisms: for an exact sequence $0 \to L \to M \to N \to 0$ of $R$-modules, if $M,N$ are in $\X$, then so is $L$.
\end{enumerate}
\end{dfn}

A resolving subcategory is a subcategory such that any two minimal resolutions of a module by modules in it have the same length; see \cite[Lemma (3.12)]{AB}.
Note that one can replace the condition (R1) with:
\begin{enumerate}[(R1')]
\item
$\X$ contains $R$.
\end{enumerate}

Next we recall the notion of a totally reflexive module.

\begin{dfn}
An $R$-module $M$ is called {\em totally reflexive} if the natural homomorphism $M\to M^{**}$ is an isomorphism and $\Ext_R^i(M,R)=0=\Ext_R^i(M^*,R)$ for all $i>0$.
We denote by $\TR(R)$ the subcategory of $\mod R$ consisting of totally reflexive modules.
\end{dfn}

A totally reflexive module was defined by Auslander \cite{A}, and deeply studied by Auslander and Bridger \cite{AB}.
The $R$-dual of a totally reflexive $R$-module is also totally reflexive.
Every projective module is totally reflexive, i.e., $\add R\subseteq\TR(R)$.
If $R$ is a Cohen-Macaulay local ring, then every totally reflexive $R$-module is Cohen-Macaulay, i.e., $\TR(R)\subseteq\CM(R)$.
When $R$ is a Gorenstein local ring, an $R$-module is totally reflexive if and only if it is Cohen-Macaulay, i.e., $\TR(R)=\CM(R)$.
For more details, see \cite{AB} and \cite{Ch}.

Syzygies, cosyzygies and transposes are key tools in this paper.
We recall here their precise definitions.

\begin{dfn}\label{stc}
Let $(R,\m)$ be a local ring, and let $M$ be an $R$-module.
\begin{enumerate}[(1)]
\item
Take a minimal free resolution
$
\cdots \xrightarrow{\delta_{n+1}} F_n \xrightarrow{\delta_n} F_{n-1} \xrightarrow{\delta_{n-1}} \cdots \xrightarrow{\delta_1} F_0 \to M \to 0
$
of $M$.
Then, for each $n\ge1$, the image of $\delta_n$ is called the {\em $n$-th syzygy} of $M$ and denoted by $\syz^nM$ (or $\syz_R^nM$).
For convention, we set $\syz^0M=M$.
\item
The cokernel of the $R$-dual map $\delta_1^*:F_0^*\to F_1^*$ is called the {\em (Auslander) transpose} of $M$ and denoted by $\tr M$ (or $\tr_RM$).
\item
Let
$
0 \to M \to F_{-1} \xrightarrow{\delta_{-1}} \cdots \xrightarrow{\delta_{-(n-1)}} F_{-n} \xrightarrow{\delta_{-n}} F_{-(n+1)} \xrightarrow{\delta_{-(n+1)}} \cdots
$
be a {\em minimal free coresolution} of $M$, that is, an exact sequence with $F_{-n}$ free and $\im\delta_{-n}\subseteq\m F_{-(n+1)}$ for all $n\ge1$.
Then we call the image of $\delta_{-n}$ the {\em $n$-th cosyzygy} of $M$ and denote it by $\syz^{-n}M$ (or $\syz_R^{-n}M$).
\end{enumerate}
\end{dfn}

Let $R$ be a local ring.
Then by \cite[Lemma 3.2]{Y2} one can replace (R4) with:
\begin{enumerate}[(R4')]
\item
$\X$ is closed under syzygies: if $M$ is in $\X$, then so is $\syz M$.
\end{enumerate}

Totally reflexive modules behave well under taking their syzygies, cosyzygies and transposes.
Let $R$ be a local ring.
Let $M$ be a totally reflexive $R$-module.
The $R$-dual of a minimal free resolution (respectively, coresolution) of $M$ is a minimal free coresolution (respectively, resolution) of $M^*$.
In particular, a minimal free coresolution of $M$ always exists, and it is uniquely determined up to isomorphism.
The $n$-th syzygy $\syz^nM$ and cosyzygy $\syz^{-n}M$ are again totally reflexive for all $n$.
This is an easy consequence of \cite[(1.2.9) and (1.4.8)]{Ch}.
The transpose $\tr M$ is also totally reflexive; see \cite[Proposition (3.8)]{AB}.
For an $R$-module $M$, the $n$-th syzygy $\syz^nM$ for any $n\ge1$ and the transpose $\tr M$ are uniquely determined up to isomorphism, since so are a minimal free resolution of $M$.
If $M$ is totally reflexive, then the $n$-th cosyzygy $\syz^{-n}M$ for any $n\ge1$ is also uniquely determined up to isomorphism, since so is a minimal free coresolution of $M$.

A lot of subcategories of $\mod R$ are known to be resolving.
For example, $\CM(R)$ is a resolving subcategory of $\mod R$ if $R$ is Cohen-Macaulay.
The subcategory of $\mod R$ consisting of totally reflexive $R$-modules is resolving by \cite[(3.11)]{AB}.
One can construct a resolving subcategory easily by using vanishing of Tor or Ext.
Also, the modules of complexity less than a fixed integer form a resolving subcategory of $\mod R$.
For the details, we refer to \cite[Example 2.4]{res}.

Now we define a thick subcategory of totally reflexive modules.

\begin{dfn}
A subcategory $\X$ of $\TR(R)$ is called {\it thick} if it is closed under direct summands, and short exact sequences: for an exact sequence $0 \to L \to M \to N \to 0$ of totally reflexive $R$-modules, if two of $L,M,N$ are in $\X$, then so is the third.
\end{dfn}

A typical example of a thick subcategory is obtained by restricting a resolving subcategory to $\TR(R)$.

The following proposition is shown by an argument dual to \cite[Lemma 3.2]{Y2}.

\begin{prop}\label{12}
Let $R$ be a local ring.
Let $\X$ be a subcategory of $\TR(R)$ containing $R$.
Then $\X$ is a thick subcategory of $\TR(R)$ if and only if $\X$ is a resolving subcategory of $\mod R$ and is closed under cosyzygies: if $M$ is in $\X$, then so is $\syz^{-1}M$.
\end{prop}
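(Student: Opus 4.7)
The plan is to prove both implications using the standard fact (see \cite{Ch}) that $\TR(R)$ is closed under extensions, direct summands, syzygies, and cosyzygies. The forward direction is a sequence of bookkeeping verifications; the reverse direction rests on a single pushout construction, dual to the pullback that establishes \cite[Lemma~3.2]{Y2}.

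For the forward direction, assume $\X$ is thick in $\TR(R)$. Closure under direct summands (R2) is immediate from the definition of thick. From $R\in\X$ and the split exact sequences $\ses{R}{R^{n+1}}{R^n}$, whose terms all lie in $\TR(R)$, thickness inductively places every free module in $\X$; summands then give every projective, proving (R1). Property (R3) follows because an extension of totally reflexive modules is totally reflexive, so thickness applies directly. For (R4), given $\ses{L}{M}{N}$ with $M,N\in\X$, pull back $M\to N$ along a minimal free cover $F\to N$; the resulting row $\ses{L}{Q}{F}$ splits because $F$ is free, and the resulting column $\ses{\syz N}{Q}{M}$ is exact. Hence $Q\cong L\oplus F$ is an extension of totally reflexive modules, so $L$ itself is totally reflexive, and thickness applied to $\ses{L}{M}{N}$ yields $L\in\X$. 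Cosyzygy closure is obtained by applying thickness to $\ses{M}{F}{\syz^{-1}M}$, whose first two terms lie in $\X$.

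For the reverse direction, assume $\X$ is resolving and closed under cosyzygies. Given $\ses{L}{M}{N}$ in $\TR(R)$ with two terms in $\X$, we want the third in $\X$. If $L,N\in\X$, then $M\in\X$ by (R3); if $M,N\in\X$, then $L\in\X$ by (R4). The remaining case, $L,M\in\X$, is the heart of the argument. Form the pushout of $L\hookrightarrow M$ along the inclusion $L\hookrightarrow F$ drawn from the cosyzygy sequence $\ses{L}{F}{\syz^{-1}L}$; the pushout object $P$ sits in two short exact sequences $\ses{M}{P}{\syz^{-1}L}$ and $\ses{F}{P}{N}$. By the cosyzygy hypothesis $\syz^{-1}L\in\X$, so together with $M\in\X$ axiom (R3) gives $P\in\X$; since $F\in\X$ (it is free), axiom (R4) applied to the second sequence yields $N\in\X$.

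The main obstacle is precisely this final pushout step: one must present $N$ as the cokernel of a monomorphism between two objects of $\X$, and the only natural source of such a presentation is the cosyzygy of $L$. This is exactly why \emph{cosyzygy} closure—rather than syzygy closure—is the correct supplementary hypothesis, and it mirrors the pullback trick that replaces (R4) by syzygy closure in \cite[Lemma~3.2]{Y2}.
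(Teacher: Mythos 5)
Your overall strategy is the one the paper intends (it only says the proposition ``is shown by an argument dual to \cite[Lemma 3.2]{Y2}''), and the forward direction is carried out correctly: in particular your pullback argument showing that the kernel $L$ in (R4) is itself totally reflexive, so that thickness may legitimately be applied to $\ses{L}{M}{N}$, is exactly the kind of verification that needs to be made.

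There is, however, one genuine misstep, at the last line of the reverse direction. From the pushout you obtain $\ses{F}{P}{N}$ with $F$ free and $P\in\X$, and you claim that ``axiom (R4) applied to the second sequence yields $N\in\X$.'' But (R4) is closure under \emph{kernels} of epimorphisms: from the middle and right terms it produces the left term. Here you know the left and middle terms and want the right one, i.e.\ you are asking for closure under \emph{cokernels of monomorphisms}, which resolving subcategories do not satisfy (for $R$ regular local of dimension one, $\add R$ is resolving, yet $0\to R\xrightarrow{x}R\to R/xR\to0$ has cokernel outside $\add R$). Your closing paragraph repeats the same misconception when it says the goal is ``to present $N$ as the cokernel of a monomorphism between two objects of $\X$.'' The step is easily repaired, but by a different mechanism: since $N$ is totally reflexive, $\Ext_R^1(N,F)=0$ for the free module $F$, so the sequence $\ses{F}{P}{N}$ splits, $N$ is a direct summand of $P$, and (R2) gives $N\in\X$. (Equivalently, one can read off from $\ses{M}{P}{\syz^{-1}L}$ that $P$ is, up to free summands, forced to contain $N$ as a summand.) With that substitution the proof is complete.
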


Let $(R,\m)$ be a local ring.
We call $R$ a {\em hypersurface} if the $\m$-adic completion $\widehat R$ of $R$ is a residue ring of a complete regular local ring by a principal ideal.
We say that $R$ is a {\em complete intersection} if $\widehat R$ is a residue ring of a complete regular local ring by an ideal generated by a regular sequence.

We recall the definitions of {\em Gorenstein dimension} and {\em complete intersection dimension}, which are abbreviated to G-dimension and CI-dimension.
These notions have been introduced by Auslander and Bridger \cite{AB} and Avramov, Gasharov and Peeva \cite{AGP}, respectively.

\begin{dfn}
Let $R$ be a local ring, and let $M$ be an $R$-module.
The {\em G-dimension} of $M$, denoted $\Gdim_RM$, is defined as the infimum of the lengths of totally reflexive resolutions of $M$, namely, exact sequences of the form
$
0 \to X_n \to X_{n-1} \to \cdots \to X_1 \to X_0 \to M \to 0
$
with each $X_i$ being totally reflexive.
The {\em CI-dimension} of $M$ is defined as the infimum of
$
\pd_S (M\otimes_RR')-\pd_SR'
$
where $R \to R' \leftarrow S$ runs over the quasi-deformations of $R$.
Here, a diagram $R \overset{f}{\to} R' \overset{g}{\leftarrow} S$ of homomorphisms of local rings is called a quasi-deformation of $R$ if $f$ is faithfully flat and $g$ is a surjection whose kernel is generated by an $S$-sequence.
\end{dfn}

Recall that $M$ is said to have {\em complexity} $c$, denoted by $\cx_RM=c$, if $c$ is the least nonnegative integer $n$ such that there exists a real number $r$ satisfying the inequality $\beta_i^R(M)\le ri^{n-1}$ for all $i\gg 0$.

\begin{rem}
For a local ring $(R,\m,k)$ and a module $M$ over $R$, the following are known to hold.
For the proofs, we refer to \cite{Ch} and \cite{AGP}.
\begin{enumerate}[(1)]
\item
$\Gdim_RM=\infty$ if and only if $M$ does not admit a totally reflexive resolution of finite length.
\item
$\CIdim_RM=\infty$ if and only if $\pd_S(M\otimes_RR')=\infty$ for every quasi-deformation $R \to R' \leftarrow S$.
\item
One has $M=0\Leftrightarrow\Gdim_RM=-\infty\Leftrightarrow\CIdim_RM=-\infty$.
\item
$\Gdim_RM\le0$ if and only if $M$ is totally reflexive.
\item
If $\Gdim_RM$ (respectively, $\CIdim_RM$) is finite, then it is equal to $\depth R-\depth_RM$.
\item
The inequalities $\Gdim_RM\le\CIdim_RM\le\pd_RM$ hold, and equalities hold to the left of any finite dimension.
\item
If $M\ne0$, then $\Gdim_R(\syz^nM)=\sup\{\Gdim_RM-n,0\}$ and $\CIdim_R(\syz^nM)=\sup\{\CIdim_RM-n,0\}$ hold for all $n\ge0$.
\item
If $R$ is a Gorenstein ring (respectively, a complete intersection), then $\Gdim_RM$ (respectively, $\CIdim_RM$) is finite.
If $\Gdim_Rk$ (respectively, $\CIdim_Rk$) is finite, then $R$ is a Gorenstein ring (respectively, a complete intersection).
\item
If $\CIdim_RM<\infty$, then $\cx_RM<\infty$.
\end{enumerate}
\end{rem}

%%%%%%%%%%%%%%%%%%%%%%%%%%%%%%%%%%%%%%%%%%%%%%%%%%%%%%%%
\section{Definition of the radius of a subcategory}\label{defisec}

This section contains the key definition and establishes several results.
More precisely, we will give the definition of the radius of a subcategory of $\mod R$ for a local ring $R$, and compare it with other notions, such as the dimension of a triangulated category defined by Rouquier.
We will also explore its relationships with representation types of a Cohen-Macaulay local ring.

\begin{dfn}\label{keydef}
Let $R$ be a local ring.
\begin{enumerate}[(1)]
\item
For a subcategory $\X$ of $\mod R$ we denote by $[\X]$ the additive closure of the subcategory of $\mod R$ consisting of $R$ and all modules of the form $\syz^iX$, where $i\ge0$ and $X\in\X$.
When $\X$ consists of a single module $X$, we simply denote it by $[X]$.
\item
For subcategories $\X,\Y$ of $\mod R$ we denote by $\X\circ\Y$ the subcategory of $\mod R$ consisting of the $R$-modules $M$ which fits into an exact sequence
$
0 \to X \to M \to Y \to 0
$
with $X\in\X$ and $Y\in\Y$.
We set $\X\bullet\Y=[[\X]\circ[\Y]]$.
\item
Let $\C$ be a subcategory of $\mod R$.
We define the {\it ball of radius $r$ centered at $\C$} as
$$
[\C]_r=
\begin{cases}
[\C] & (r=1),\\
[\C]_{r-1}\bullet\C=[[\C]_{r-1}\circ[\C]] & (r\ge2).
\end{cases}
$$
If $\C$ consists of a single module $C$, then we simply denote $[\C]_r$ by $[C]_r$, and call it the ball of radius $r$ centered at $C$.
We write $[\C]_r^R$ when we should specify that $\mod R$ is the ground category where the ball is defined.
\end{enumerate}
\end{dfn}

Some similar notions have already been introduced.
In \cite[Definition 3.1]{res} the second author defines the subcategory $\res^n\X$ of the resolving closure $\res\X$ of a given subcategory $\X$ of $\mod R$.
This is different from ours in that $\res^n\X$ is not closed under syzygies.
In \cite{ABIM} the {\em thickening} $\thick^n\X$ of a given subcategory $\X$ of a triangulated category is defined.
This cannot be applied directly to a module category.

\begin{prop}\label{1}
Let $R$ be a local ring.
\begin{enumerate}[\rm(1)]
\item
Let $\X,\Y$ be subcategories of $\mod R$.
The following are equivalent for an $R$-module $M$:
\begin{enumerate}[\rm(a)]
\item
$M$ belongs to $\X\bullet\Y$;
\item
There exists an exact sequence $0 \to X \to Z \to Y \to 0$ of $R$-modules with $X\in[\X]$ and $Y\in[\Y]$ such that $M$ is a direct summand of $Z$.
\end{enumerate}
\item
For subcategories $\X,\Y,\Z$ of $\mod R$, one has $(\X\bullet\Y)\bullet\Z=\X\bullet(\Y\bullet\Z)$.
\item
Let $\C$ be a subcategory of $\mod R$, and let $a,b$ be positive integers.
Then one has $[\C]_a\bullet[\C]_b=[\C]_{a+b}=[\C]_b\bullet[\C]_a$.
\end{enumerate}
\end{prop}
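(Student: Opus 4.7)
The plan is to establish the three parts in order: part (1) via an iterated horseshoe argument, part (2) via pushout and pullback diagrams built from (1), and part (3) by induction using (1) and (2). The main obstacle, concentrated entirely in (1), is that syzygies are not functorial on modules, so a short exact sequence does not yield a short exact sequence of minimal syzygies without the appearance of extra free summands; these must be absorbed into $[\X]$, which by definition already contains $R$ and all iterated syzygies of its members.

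For part (1), the implication (b) $\Rightarrow$ (a) is immediate, since any $Z$ sitting in such a sequence lies in $[\X]\circ[\Y]$, so every direct summand of $Z$ lies in $[[\X]\circ[\Y]]=\X\bullet\Y$. For (a) $\Rightarrow$ (b), $M$ is by definition a direct summand of some $R^a\oplus\bigoplus_j\syz^{i_j}N_j$ with each $N_j$ in an exact sequence $0\to X_j\to N_j\to Y_j\to 0$, $X_j\in[\X]$, $Y_j\in[\Y]$. I would apply the horseshoe lemma iteratively to produce, for each $i\ge 0$, an exact sequence $0\to\syz^i X_j\to\syz^i N_j\oplus F_{i,j}\to\syz^i Y_j\to 0$ with $F_{i,j}$ free, the free summand arising because the horseshoe construction from minimal resolutions of $X_j$ and $Y_j$ need not yield a minimal resolution of $N_j$. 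Summing these sequences at the indices $i_j$ and adding the trivial sequence $0\to R^a\to R^a\to 0\to 0$ produces a short exact sequence whose middle term contains $M$ as a direct summand and whose outer terms lie in $[\X]$ and $[\Y]$ respectively, since both categories are closed under direct sums with $R$ and under iterated syzygies.

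For part (2), I would argue via pushout and pullback. To prove $(\X\bullet\Y)\bullet\Z\subseteq\X\bullet(\Y\bullet\Z)$, pick $M$ on the left; by (1) it is a direct summand of some $W$ sitting in $0\to A\to W\to B\to 0$ with $A\in\X\bullet\Y$ (using that $[\,\cdot\,]$ is idempotent, so $[\X\bullet\Y]=\X\bullet\Y$) and $B\in[\Z]$. Applying (1) to $A$, write $A\oplus A'=W'$ with $0\to X\to W'\to Y\to 0$, $X\in[\X]$, $Y\in[\Y]$; then forming the quotient $(W\oplus A')/X$ along the injection $X\hookrightarrow W'\hookrightarrow W\oplus A'$ produces two exact sequences
\[
0\to X\to W\oplus A'\to (W\oplus A')/X\to 0,\qquad 0\to Y\to (W\oplus A')/X\to B\to 0,
\]
the second placing $(W\oplus A')/X$ in $[\Y]\circ[\Z]\subseteq\Y\bullet\Z$ and the first then exhibiting $M$ (a summand of $W$, hence of $W\oplus A'$) as an element of $\X\bullet(\Y\bullet\Z)$. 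The reverse inclusion is dual: after writing $C\oplus C'=W'$ for the right-hand term $C$ of the outer sequence, take the preimage of $Y$ inside $W\oplus C'$ to obtain a subobject fitting into $[\X]\circ[\Y]\subseteq\X\bullet\Y$ with quotient $Z\in[\Z]$.

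For part (3), I would induct on $b$. Each $[\C]_r$ is closed under syzygies and under additive closure, hence $[[\C]_r]=[\C]_r$, and the base case $b=1$ is just the recursive definition $[\C]_{a+1}=[\C]_a\bullet\C$. The inductive step
\[
[\C]_a\bullet[\C]_{b+1} = [\C]_a\bullet([\C]_b\bullet\C) = ([\C]_a\bullet[\C]_b)\bullet\C = [\C]_{a+b}\bullet\C = [\C]_{a+b+1}
\]
uses the associativity from (2), and the commuted identity $[\C]_{a+b}=[\C]_b\bullet[\C]_a$ follows by swapping the roles of $a$ and $b$ and running the same induction.
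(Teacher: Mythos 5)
Your proposal is correct and follows essentially the same route as the paper's proof: the horseshoe-type syzygy argument (with the extra free summands absorbed into the middle term) for part (1), pushout/pullback constructions for the two inclusions in part (2) — your explicit quotient $(W\oplus A')/X$ and the dual preimage are exactly the paper's $3\times 3$ diagrams written out — and induction on $b$ via associativity for part (3). Your explicit remarks that $[\,\cdot\,]$ is idempotent on the classes $\X\bullet\Y$ and $[\C]_r$ are points the paper uses silently, so no gap there.
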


\begin{proof}
(1) The implication (b) $\Rightarrow$ (a) is obvious.
To prove the opposite implication (a) $\Rightarrow$ (b), let $M$ be an $R$-module in $\X\bullet\Y=[[\X]\circ[\Y]]$.
By definition, $M$ is isomorphic to a direct summand of $R^{\oplus p}\oplus\bigoplus_{i=0}^n(\syz^iZ_i)^{\oplus q_i}$, where $p,q_i\ge0$ and $Z_i\in[\X]\circ[\Y]$.
For each $0\le i\le n$ there is an exact sequence $0 \to X_i \to Z_i \to Y_i \to 0$ with $X_i\in[\X]$ and $Y_i\in[\Y]$.
Taking syzygies and direct sums, we have an exact sequence
$$
0 \to R^{\oplus p}\oplus\bigoplus_{i=0}^n(\syz^iX_i)^{\oplus q_i} \to R^{\oplus p}\oplus\bigoplus_{i=0}^n(\syz^iZ_i)^{\oplus q_i}\oplus R^{\oplus r} \to \bigoplus_{i=0}^n(\syz^iY_i)^{\oplus q_i} \to 0.
$$
The left and right terms are in $[\X]$ and $[\Y]$, respectively.
The middle term contains an $R$-module isomorphic to $M$.
Thus the statement (b) follows.

(2) First, let $M$ be an $R$-module in $(\X\bullet\Y)\bullet\Z$.
By the assertion (1) there is an exact sequence $0 \to W \xrightarrow{f} V \to Z \to 0$ with $W\in\X\bullet\Y$ and $Z\in[\Z]$ such that $M$ is a direct summand of $V$.
By (1) again, we have an exact sequence $0 \to X \to U \to Y \to 0$ with $X\in[\X]$ and $Y\in[\Y]$ such that $W$ is a direct summand of $U$.
Writing $U=W\oplus W'$, we make the following pushout diagram.
$$
\begin{CD}
@. 0 @. 0 \\
@. @VVV @VVV \\
@. X @= X \\
@. @VVV @VVV \\
0 @>>> W\oplus W' @>{
\left(
\begin{smallmatrix}
f & 0 \\
0 & 1
\end{smallmatrix}
\right)
}>> V\oplus W' @>>> Z @>>> 0 \\
@. @VVV @VVV @| \\
0 @>>> Y @>>> T @>>> Z @>>> 0 \\
@. @VVV @VVV \\
@. 0 @. 0
\end{CD}
$$
The bottom row implies that $T$ is in $\Y\bullet\Z$, and it follows from the middle column that $M$ belongs to $\X\bullet(\Y\bullet\Z)$.
Hence we have $(\X\bullet\Y)\bullet\Z\subseteq\X\bullet(\Y\bullet\Z)$.

Next, let $M$ be an $R$-module in $\X\bullet(\Y\bullet\Z)$.
Then it follows from (1) that there is an exact sequence $0 \to X \to V \xrightarrow{f} W \to 0$ with $X\in[\X]$ and $W\in\Y\bullet\Z$ such that $M$ is a direct summand of $V$.
Applying (1) again, we have an exact sequence $0 \to Y \to U \to Z \to 0$ with $Y\in[\Y]$ and $Z\in[\Z]$ such that $W$ is a direct summand of $U$.
Write $U=W\oplus W'$, and we have a pullback diagram:
$$
\begin{CD}
@. @. 0 @. 0 \\
@. @. @VVV @VVV \\
0 @>>> X @>>> T @>>> Y @>>> 0 \\
@. @| @VVV @VVV \\
0 @>>> X @>>> V\oplus W' @>{
\left(
\begin{smallmatrix}
f & 0 \\
0 & 1
\end{smallmatrix}
\right)
}>> W\oplus W' @>>> 0 \\
@. @. @VVV @VVV \\
@. @. Z @= Z \\
@. @. @VVV @VVV \\
@. @. 0 @. 0
\end{CD}
$$
We see from the first row that $T$ is in $\X\bullet\Y$, and from the middle column that $M$ is in $(\X\bullet\Y)\bullet\Z$.
Therefore $\X\bullet(\Y\bullet\Z)\subseteq(\X\bullet\Y)\bullet\Z$ holds.

(3) It is enough to show the equality $[\C]_a\bullet[\C]_b=[\C]_{a+b}$.
We prove this by induction on $b$.
It holds by definition when $b=1$.
Let $b\ge2$.
Then we have
$$
[\C]_a\bullet[\C]_b
= [\C]_a\bullet([\C]_{b-1}\bullet\C)
= ([\C]_a\bullet[\C]_{b-1})\bullet\C
= [\C]_{a+b-1}\bullet\C
= [\C]_{a+b},
$$
where the second equality follows from (2), and the induction hypothesis implies the third equality.
\end{proof}

Let $\C$ be a subcategory of $\mod R$ and $r\ge0$ an integer.
By the second and third assertions of Proposition \ref{1}, without danger of confusion we can write:
$$
[\C]_r=\overbrace{\C\bullet\cdots\bullet\C}^r.
$$

Now we can make the definition of the radius of a subcategory.

\begin{dfn}\label{defrad}
Let $R$ be a local ring, and let $\X$ be a subcategory of $\mod R$.
We define the {\it radius} of $\X$, denoted by $\radi\X$, as the infimum of the integers $n\ge0$ such that there exists a ball of radius $n+1$ centered at a module containing $\X$.
By definition, $\radi\X\in\N\cup\{\infty\}$.
\end{dfn}

The definition of the radius of a resolving subcategory looks similar to that of the {\em dimension} of a triangulated category which has been introduced by Rouquier (cf. \cite[Definition 3.2]{R}).
The stable category $\uCM(R)$ of Cohen-Macaulay modules over a Gorenstein local ring $R$ is triangulated by \cite{B,H}, and the dimension of $\uCM(R)$ in the sense of Rouquier is defined.
It might look the same as the radius of $\CM(R)$ in our sense.

However there are (at least) two differences in the definitions:
\begin{enumerate}[(1)]
\item
A defining object for $\dim\uCM(R)$ is required to be inside the category $\uCM(R)$, but a defining object for $\radi\CM(R)$ is not, i.e., it is enough to be an object of $\mod R$.
More precisely, $\dim\uCM(R)$ (respectively, $\radi\CM(R)$) is defined as the infimum of the integers $n\ge0$ such that $\uCM(R)=\langle G\rangle_{n+1}$ for some object $G$ (respectively, $\CM(R)\subseteq[C]_{n+1}$ for some object $C$).
Then $G$ must be an object of $\uCM(R)$, while $C$ may not be an object of $\CM(R)$, just being an object of $\mod R$.
\item
Let $\X$ and $\Y$ be subcategories of $\CM(R)$ and $\uCM(R)$, respectively.
Then the subcategory $\langle\Y\rangle$ of $\uCM(R)$ is closed under taking cosyzygies of Cohen-Macaulay modules in it, but the subcategory $[\X]$ of $\CM(R)$ is not in general.
(In fact, this difference is a reason why we can prove Proposition \ref{10} below but do not know whether the analogue for dimension holds or not; see Question \ref{11} below.)
\end{enumerate}

Thus these two notions are different, but they are still related to each other.
In fact, we can show that the following relationship exists between them.

\begin{prop}\label{dimrad}
Let $R$ be a Gorenstein local ring.
\begin{enumerate}[\rm(1)]
\item
One has the inequality
$
\dim\uCM(R)\le\radi\CM(R).
$
\item
The equality holds if $R$ is a hypersurface.
\end{enumerate}
\end{prop}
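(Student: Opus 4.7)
For part (1), I would begin with a witness $C\in\mod R$ attaining $\radi\CM(R)=n$, so that $\CM(R)\subseteq[C]_{n+1}$, and set $G:=\syz^d C$ with $d=\dim R$. Since $R$ is Gorenstein of dimension $d$, the $d$-th syzygy of any module is Cohen-Macaulay, so $G\in\CM(R)$. The plan is to push the ball $[C]_{n+1}$ through the functor $\syz^d$ into the triangulated ball $\langle G\rangle_{n+1}$ in $\uCM(R)$. Three ingredients drive this: (i) $\syz^d R=0$, so the free generator disappears in $\uCM(R)$; (ii) the iterated horseshoe lemma sends a short exact sequence in $\mod R$ to a short exact sequence of $d$-th syzygies in $\CM(R)$ up to free summands, hence to a triangle in $\uCM(R)$; (iii) for every $i\ge0$ the generator $\syz^{d+i}C=\syz^i G$ represents $\Sigma^{-i}G$ in $\uCM(R)$, where $\Sigma$ is the suspension. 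Applying $\syz^d$ to a witness decomposition of any $M\in\CM(R)$ therefore places $\syz^d M$ in $\langle G\rangle_{n+1}$; since $M\cong\Sigma^d\syz^d M$ in $\uCM(R)$ and $\langle G\rangle_{n+1}$ is shift-closed, $M\in\langle G\rangle_{n+1}$.

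For part (2), I would take a witness $G\in\uCM(R)$ attaining $\dim\uCM(R)=n$ and show that the same $G$, viewed as an object of $\mod R$, satisfies $\CM(R)\subseteq[G]_{n+1}$. Over a hypersurface, Eisenbud's matrix factorization theorem gives $\Sigma^2\cong\mathrm{id}$ in $\uCM(R)$, so the only stable isomorphism classes of shifts of $G$ are $G$ and $\Sigma G\cong\syz^{-1}G\cong\syz G$; both have representatives in $\mod R$ lying in $[G]$. I would next observe that $[G]_r$ is closed under stable isomorphism, since whenever $M\oplus R^a\cong N\oplus R^b$ with $M\in[G]_r$, $N$ is a summand of $M\oplus R^a\in[G]_r$ (using $R\in[G]_1\subseteq[G]_r$ together with closure under direct sums and summands provided by Proposition \ref{1}). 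An induction on $r$ then shows that any $X\in\langle G\rangle_r$ has a representative in $[G]_r$: triangles in $\uCM(R)$ arise, up to stable isomorphism, from short exact sequences in $\CM(R)$, and the operation $\bullet$ is designed to absorb precisely such sequences. Taking $r=n+1$ gives $\CM(R)\subseteq[G]_{n+1}$, hence $\radi\CM(R)\le n$, which combined with (1) yields the equality.

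The main obstacle, and the point requiring real care, is controlling the free summands that appear whenever a triangle in $\uCM(R)$ is lifted to an honest short exact sequence in $\CM(R)$. These free summands threaten to push objects outside the ball, but they are harmless precisely because $R$ itself lies in $[G]$, so that $[G]_r$ absorbs both the addition and removal of free summands at every stage of the induction. A smaller subtlety specific to part (1) is that the witness $C$ need not be Cohen-Macaulay; taking exactly $d=\dim R$ syzygies forces every generator $\syz^{d+i}C$ into $\CM(R)$, which is what makes the transported extensions into genuine triangles in $\uCM(R)$.
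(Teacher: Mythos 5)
Your proposal is correct and follows essentially the same route as the paper: in (1) the paper also replaces $C$ by the Cohen--Macaulay module $\syz^{-d}\syz^dC$ (equivalently, works with $G=\syz^dC$ in $\uCM(R)$) and transports the ball by induction via the horseshoe lemma, absorbing free summands; in (2) it likewise uses the $2$-periodicity $\syz N\cong\syz^{-1}N$ up to free summands to handle shifts and lifts each triangle to a short exact sequence $0\to X\to Y\oplus R^{\oplus h}\to Z\to 0$. The points you flag as requiring care (free summands, and the witness $C$ not being Cohen--Macaulay) are exactly the ones the paper's argument addresses.
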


\begin{proof}
(1) We may assume that $n:=\radi\CM(R)<\infty$.
Then there exists an $R$-module $C$ such that $\CM(R)$ is contained in the ball $[C]_{n+1}$.

We claim that $\CM(R)=[\syz^{-d}\syz^dC]_{n+1}$ holds, where $d=\dim R$.
This claim implies $\uCM(R)=\langle\syz^dC\rangle_{n+1}$, which shows $\dim\uCM(R)\le n$.

In the following, we show this claim.
Since $\syz^{-d}\syz^dC$ is a Cohen-Macaulay $R$-module, and $\CM(R)$ is a resolving subcategory of $\mod R$, the inclusion $\CM(R)\supseteq[\syz^{-d}\syz^dC]_{n+1}$ hold.
To get the opposite inclusion, it is enough to prove that for every $m\ge1$ and $M\in[C]_m$ we have $\syz^{-d}\syz^dM\in[\syz^{-d}\syz^d C]_m$.
Let us prove this by induction on $m$.
The case $m=1$ is obvious, so let $m\ge2$.
According to Proposition \ref{1}(1), there is an exact sequence
$
0 \to X \to Y \to Z \to 0
$
of $R$-modules with $X\in[C]_{m-1}$ and $Z\in[C]$ such that $M$ is a direct summand of $Y$.
Taking the $d$-th syzygies, we have an exact sequence $0 \to \syz^dX \to \syz^dY\oplus R^{\oplus l} \to \syz^dZ \to 0$ of Cohen-Macaulay $R$-modules.
Since $R$ is Gorenstein, taking the $d$-th cosyzygies makes an exact sequence
$$
0 \to \syz^{-d}\syz^dX \to \syz^{-d}\syz^dY\oplus R^{\oplus k} \to \syz^{-d}\syz^dZ \to 0
$$
of Cohen-Macaulay modules.
The induction hypothesis implies $\syz^{-d}\syz^dZ\in[\syz^{-d}\syz^dC]$ and $\syz^{-d}\syz^dX\in[\syz^{-d}\syz^dC]_{m-1}$.
Since $\syz^{-d}\syz^dM$ is a direct summand of $\syz^{-d}\syz^dY$, it belongs to $[\syz^{-d}\syz^dC]_m$.

(2) Let $n:=\dim\uCM(R)<\infty$.
We find a Cohen-Macaulay $R$-module $G$ such that $\uCM(R)=\langle G\rangle_{n+1}$.
We want to prove that $\CM(R)=[G]_{n+1}$ holds, and it suffices to show that for every $m\ge1$ and $M\in\langle G\rangle_m$ we have $M\in[G]_m$.
Let us use induction on $m$.
The case $m=1$ follows from the fact that $\syz N\cong\syz^{-1}N$ up to free summand for each Cohen-Macaulay $R$-module $N$, since $R$ is a hypersurface.
When $m\ge2$, there exists an exact triangle
$
X \to Y \to Z \to \Sigma X
$
in $\uCM(R)$ with $X\in\langle G\rangle_{m-1}$ and $Z\in\langle G\rangle$ such that $M$ is a direct summand of $Y$.
Then we have an exact sequence
$
0 \to X \to Y\oplus R^{\oplus h} \to Z \to 0
$
of $R$-modules, and we are done by applying the induction hypothesis.
\end{proof}

In the rest of this section, we will study the relationships between the representation types of a Cohen-Macaulay local ring and the radius of the category of Cohen-Macaulay modules.
Recall that a Cohen-Macaulay local ring $R$ is said to be of {\it finite} (respectively, {\it countable}) {\it Cohen-Macaulay representation type} if $\CM(R)$ has only finitely (respectively, countably but not finitely) many indecomposable modules up to isomorphism.

We can describe the property of finite Cohen-Macaulay representation type in terms of a radius.

\begin{prop}\label{10}
Let $R$ be a Gorenstein Henselian local ring.
The following are equivalent:
\begin{enumerate}[\rm(1)]
\item
One has $\radi\CM(R)=0$;
\item
The ring $R$ has finite Cohen-Macaulay representation type.
\end{enumerate}
\end{prop}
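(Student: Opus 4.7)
The direction $(2)\Rightarrow(1)$ is essentially Krull--Schmidt: if $R$ has finite Cohen-Macaulay representation type, I would take $M_1,\dots,M_n$ to be a complete list of indecomposable Cohen-Macaulay $R$-modules (finite since $R$ is Henselian), set $C=M_1\oplus\cdots\oplus M_n$, and observe that $\CM(R)=\add C\subseteq[C]=[C]_1$, so $\radi\CM(R)=0$.

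For the nontrivial direction $(1)\Rightarrow(2)$, suppose $\CM(R)\subseteq[C]_1$ for some $R$-module $C$. My first move would be to replace $C$ by $\syz^{-d}\syz^d C$ with $d=\dim R$: this is a Cohen-Macaulay module because $R$ is Gorenstein, and the same pushout-style argument used in the proof of Proposition \ref{dimrad}(1), specialized to $n=0$, shows that $\CM(R)\subseteq[\syz^{-d}\syz^d C]$ still holds. So I may assume from the start that $C\in\CM(R)$, and by Henselianness I decompose $C=C_1^{a_1}\oplus\cdots\oplus C_t^{a_t}$ into indecomposable Cohen-Macaulay summands.

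Next, I would exploit that $\syz$ is an auto-equivalence of $\uCM(R)$ (since $R$ is Gorenstein), so that for each non-free indecomposable $C_j$ and each $i\ge0$ the module $\syz^i C_j$ is indecomposable non-free Cohen-Macaulay up to free summands. The hypothesis $\CM(R)\subseteq[C]$ then forces every indecomposable non-free Cohen-Macaulay module to be stably isomorphic to some $\syz^i C_j$ with $i\ge 0$. Applying this in particular to the cosyzygy $\syz^{-1}C_j$ (itself indecomposable non-free Cohen-Macaulay) yields, for each non-free index $j$, an index $\phi(j)$ and an integer $e(j)\ge1$ with $C_j\cong\syz^{e(j)}C_{\phi(j)}$ in $\uCM(R)$.

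The rest is a pigeonhole argument: $\phi$ is a self-map of the finite set of non-free indices, so every orbit eventually enters a cycle, forcing some $C_k$ to be $\syz$-periodic in $\uCM(R)$; every other $C_j$ is a syzygy shift of such a $C_k$ and therefore has finite $\syz$-orbit. The finite union of these orbits, together with the class of $R$, accounts for every indecomposable Cohen-Macaulay $R$-module, so $R$ has finite Cohen-Macaulay representation type. The main obstacle I anticipate is the stable-module bookkeeping in the third paragraph: one has to verify carefully that ``indecomposable non-free summand of $\syz^i C$'' really does correspond to an iterated syzygy of one of the fixed $C_j$'s, which is exactly where the Henselian and Gorenstein hypotheses both come genuinely into play (the former through Krull--Schmidt, the latter through the fact that syzygies of indecomposable non-free Cohen-Macaulay modules remain indecomposable non-free up to free summands).
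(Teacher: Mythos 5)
Your proposal is correct and follows essentially the same route as the paper: reduce to $C\in\CM(R)$ via $\syz^{-d}\syz^d$, decompose by Krull--Schmidt, use that (co)syzygies preserve indecomposability of nonfree Cohen--Macaulay modules over a Gorenstein ring, and apply the hypothesis to $\syz^{-1}C_j$ to force periodicity. The only cosmetic difference is at the end: the paper first normalizes $C$ so that no $C_{j'}$ is a syzygy of a different $C_j$, making each $C_j$ periodic outright, while you run a pigeonhole argument on the self-map $\phi$ of indices -- both are fine.
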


\begin{proof}
(2) $\Rightarrow$ (1):
If $M_1,\dots,M_r$ are the nonisomorphic indecomposable Cohen-Macaulay $R$-modules, then we have $\CM(R)=[M_1\oplus\cdots\oplus M_r]$.

(1) $\Rightarrow$ (2):
There is an $R$-module $C$ satisfying $\CM(R)\subseteq[C]$.
Setting $d=\dim R$, we have $\CM(R)=[\syz^{-d}\syz^dC]$.
Replacing $C$ with $\syz^{-d}\syz^dC$, we may assume that $\CM(R)=[C]$ with $C$ being Cohen-Macaulay.

Note that since $R$ is Henselian, the Krull-Schmidt theorem holds, i.e., each $R$-module uniquely decomposes into indecomposable $R$-modules up to isomorphism.
Let $C_1,\dots,C_n$ be the indecomposable direct summands of $C$.
We may assume that $C=C_1\oplus\cdots\oplus C_n$.
Since $R$ is Gorenstein, taking syzygies preserves indecomposability of nonfree Cohen-Macaulay $R$-modules.
We see that the set of nonisomorphic indecomposable Cohen-Macaulay $R$-modules is
$$
\{R\}\cup\{\,\syz^iC_j\mid i\ge0,\,1\le j\le n\,\}.
$$
We may assume that for all $i\ge0$ and $1\le j\ne j'\le n$ we have $\syz^iC_j\not\cong C_{j'}$, because if $\syz^iC_j\cong C_{j'}$ for some such $i,j,j'$, then we can exclude $C_{j'}$ from $C$.
Now fix an integer $j$ with $1\le j\le n$.
As taking cosyzygies preserves indecomposability of nonfree Cohen-Macaulay $R$-modules, $\syz^{-1}C_j$ is isomorphic to $\syz^aC_b$ for some $a\ge0$ and $1\le b\le n$.
Taking the $a$-th cosyzygies, we have $\syz^{-1-a}C_j\cong C_b$, hence $C_j\cong\syz^{1+a}C_b$.
This forces us to have $b=j$, which says that $C_j$ is periodic.
Hence there are only finitely many indecomposable Cohen-Macaulay $R$-modules.
\end{proof}

\begin{ques}\label{11}
Does the equality in Proposition \ref{dimrad}(1) hold true?
If it does, then Proposition \ref{10} will say that \emph{a Gorenstein Henselian local ring $R$ has finite Cohen-Macaulay representation type if and only if $\dim\uCM(R)\le0$}.
This statement is a partial generalization of Minamoto's theorem \cite[Theorem 0.2]{M}, which asserts that the same statement holds for a finite-dimensional selfinjective algebra over a perfect field, extending Yoshiwaki's recent theorem \cite[Corollary 3.10]{Y}.
\end{ques}

The next result hints at further relationship between finite radius of $\CM(R)$ and more well-known classification of singularities.

\begin{prop}
Let $R$ be a complete local hypersurface over an algebraically closed field of characteristic not two.
Assume that $R$ is of countable Cohen-Macaulay representation type.
Then $\radi\CM(R) =1$.
\end{prop}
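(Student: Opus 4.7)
The plan is to establish both $\radi\CM(R) \geq 1$ and $\radi\CM(R) \leq 1$.

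\emph{Lower bound.} A complete local hypersurface is Gorenstein and Henselian, so Proposition~\ref{10} applies. Countable Cohen-Macaulay representation type, by the convention fixed in this paper, excludes the finite case, and therefore $\radi\CM(R) \neq 0$, giving $\radi\CM(R) \geq 1$.

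\emph{Upper bound.} The strategy is to invoke the Buchweitz--Greuel--Schreyer--Kn\"orrer classification of complete local hypersurface singularities of countable CM type over an algebraically closed field of characteristic not two. Up to isomorphism, $R$ is one of the $A_\infty$- or $D_\infty$-singularities
\[
k[[x_0,\dots,x_d]]/(x_0^2+x_2^2+\cdots+x_d^2) \quad\text{or}\quad k[[x_0,\dots,x_d]]/(x_0^2 x_1+x_2^2+\cdots+x_d^2).
\]
Kn\"orrer periodicity, applied at the level of matrix factorizations, reduces the MCM classification over $R$ (up to free summands) to that of the one-dimensional model $R_A := k[[x,y]]/(x^2)$ or $R_D := k[[x,y]]/(x^2y)$, and this correspondence preserves short exact sequences. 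Hence it is enough to bound the radius in dimension one.

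For $A_\infty$, the non-free indecomposable MCMs over $R_A$ are $N := R_A/(x)$ and the ideals $I_n := (x,y^n)$ for $n\geq 1$. The short exact sequence
\[
0 \to (x) \to I_n \to I_n/(x) \to 0,
\]
together with the $R_A$-module isomorphisms $(x) \cong R_A/(x) = N$ (using $x^2=0$, so $\Ann_R(x)=(x)$) and $I_n/(x) \cong y^n k[[y]] \cong N$, realizes each $I_n$ as an extension of two copies of $N$, so $I_n \in [N]\circ[N] \subseteq [N]_2$. Together with $R,N \in [N] \subseteq [N]_2$, this gives $\CM(R_A)\subseteq [N]_2$ with center $C = N$, proving $\radi\CM(R_A) \leq 1$. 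For $D_\infty$, an analogous but more elaborate analysis, based on the known explicit list of indecomposable MCMs over $R_D$, will identify a finite direct sum $C$ of indecomposables such that each remaining indecomposable fits in a one-step extension of summands in $[C]$.

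\emph{Main obstacle.} The principal difficulties will be (i) making the Kn\"orrer reduction rigorous for the \emph{unstable} invariant $\radi$, which is defined via short exact sequences in $\mod R$ rather than triangles in $\uCM(R)$, so that one may prefer to argue directly at the level of matrix factorizations in arbitrary dimension; and (ii) handling the $D_\infty$ case, whose MCM classification is substantially richer than that of $A_\infty$ and therefore requires carefully choosing a finite center $C$ and explicitly exhibiting the required short exact sequences for each family of indecomposables.
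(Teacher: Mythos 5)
Your lower bound is exactly the paper's: countable type excludes finite type, so Proposition \ref{10} gives $\radi\CM(R)\neq 0$. For the upper bound, however, the paper does not redo any classification; it simply invokes \cite[Theorem 1.1]{AIR}, which asserts the existence of a single module $X$ such that every indecomposable $M\in\CM(R)$ fits into an exact sequence $0\to L\to M\oplus R^n\to N\to 0$ with $L,N\in\{0,X,\syz X\}$; this immediately gives $\CM(R)\subseteq[X]_2$ and hence $\radi\CM(R)\le 1$. What you propose is, in effect, to reprove that structure theorem, and the two points you yourself flag as obstacles are precisely where the content lies; as written they are genuine gaps. First, the $D_\infty$ case is not carried out at all, and it is not a routine variant of $A_\infty$: over $k[[x,y]]/(x^2y)$ the indecomposables form several infinite families, and producing for each one a one-step extension by summands of a fixed $[C]$ is the bulk of the work in \cite{AIR}. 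Second, the Kn\"orrer reduction is asserted but not justified for the unstable invariant $\radi$, which is defined by short exact sequences in $\mod R$ rather than by triangles.

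The second obstacle can in fact be closed with tools already in the paper: by Proposition \ref{dimrad}(2) one has $\radi\CM(R)=\dim\uCM(R)$ for a hypersurface, and Kn\"orrer periodicity is a triangle equivalence of stable categories (available here since the field is algebraically closed of characteristic not two, so the quadratic tail can be rewritten as a sum of products), and triangle equivalences preserve Rouquier dimension; thus the radius is unchanged by the reduction to the low-dimensional models. Your one-dimensional $A_\infty$ computation is correct: $\Ann_{R_A}(x)=(x)$ gives $(x)\cong N$, and $I_n/(x)\cong y^nk[[y]]\cong N$, so each $I_n$ lies in $[N]\circ[N]$, and Krull--Schmidt then yields $\CM(R_A)\subseteq[N]_2$. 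But until you either complete the $D_\infty$ analysis or cite \cite[Theorem 1.1]{AIR} outright, the upper bound is not established.
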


\begin{proof}
It follows from \cite[Theorem 1.1]{AIR} that there exists an $R$-module $X$ such that for every indecomposable module $M\in\CM(R)$ there is an exact sequence
$
0 \to L \to M\oplus R^n \to N \to 0
$
with $L,N\in\{0,X,\syz X\}$.
This shows that $\CM(R) = [X]_2$.
Now we see from Proposition \ref{10} that the radius of $\CM(R)$ is equal to one.
\end{proof}

%%%%%%%%%%%%%%%%%%%%%%%%%%%%%%%%%%%%%%%%%%%%%%%%%%%%%%%%
\section{Finiteness of the radius of a resolving subcategory}\label{mainsec} 

In this section we state our guiding conjecture and first main result.

\begin{conj}\label{2}
Let $R$ be a Cohen-Macaulay local ring.
Let $\X$ be a resolving subcategory of $\mod R$ with finite radius.
Then every $R$-module in $\X$ is Cohen-Macaulay.
\end{conj}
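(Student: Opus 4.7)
The plan is to proceed by contradiction: suppose $\X$ has finite radius, say $\X\subseteq [C]_{r+1}$ for some $C\in\mod R$, yet contains a module $M$ that is not maximal Cohen-Macaulay. Since the non-CM property of $M$ is detected at some prime $\p$, and since the localization $\X_\p$ is a resolving subcategory of $\mod R_\p$ into which the ball $[C]_{r+1}^R$ maps onto $[C_\p]_{r+1}^{R_\p}$, we have $\radi\X_\p\le\radi\X<\infty$. After localizing and completing we may assume that $R$ is a complete Cohen-Macaulay local ring of dimension $d$ and that $\depth_R M<d$.

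The heart of the argument is to produce a numerical invariant $\delta\colon\mod R\to\N\cup\{\infty\}$ with the following properties: (i) $\delta(R)=0$; (ii) $\delta$ is subadditive on short exact sequences, i.e.\ $\delta(B)\le \delta(A)+\delta(C)$ whenever $\ses{A}{B}{C}$ is exact; (iii) $\delta$ is preserved under direct summands and by the syzygy operator; and (iv) $\delta(N)=\infty$, or at least grows without bound along the syzygy sequence of $N$, whenever $N$ fails to be Cohen-Macaulay. Properties (i)--(iii) force $\delta$ to be uniformly bounded on the ball $[C]_{r+1}$ by a quantity depending only on $C$ and $r$, while (iv) produces the required contradiction with $M\in\X\subseteq[C]_{r+1}$ (applied, if necessary, to the syzygies of $M$, which all lie in $\X$ by resolvingness).

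In the complete-intersection case (the content of Theorem \ref{3} and hence Theorem I), the cohomology operators of Gulliksen and Eisenbud supply such a $\delta$ essentially via the complexity $\cx_R(-)$ and the CI-dimension: complexity is subadditive on short exact sequences, preserved under direct summands and syzygies, and the structure of $\Ext^*_R(N,R)$ as a finitely generated module over the polynomial ring of operators controls the Cohen-Macaulay defect $d-\depth_R N$ uniformly. The main obstacle in extending this to a general Cohen-Macaulay $R$ is the very construction of $\delta$ in the absence of cohomology operators: the naive candidate $\delta(N)=d-\depth_R N$ fails to be subadditive in the right direction, and without a CI-style action on $\Ext^*_R(N,R)$ there is no uniform cohomological bound to exploit. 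A plausible route is to combine the construction that yields finite radius of $\CM(R)$ in Theorem II (which uses the perfect-coefficient-field hypothesis) with a careful depth-growth analysis along minimal free resolutions, hoping that the generator $C$ witnessing finite radius can be used to manufacture an Ext-degree-type invariant satisfying (i)--(iv); turning that heuristic into a theorem is precisely what the conjecture asks for.
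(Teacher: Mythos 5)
First, a point of framing: the statement you were asked to prove appears in the paper as a \emph{conjecture}. The authors do not prove it for general Cohen--Macaulay local rings either; they establish it only in the special cases of Corollary \ref{8} (complete intersections, and Gorenstein rings over which every resolving subcategory inside $\CM(R)$ is thick), via Theorem \ref{3}. Your closing admission that the general case remains open is therefore consistent with the paper. The problem is that your sketch of the one case you present as settled --- the complete intersection case --- does not describe a working argument, and it is not the paper's argument.

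The invariant $\delta$ you posit cannot be ``essentially complexity'': complexity does satisfy your (i)--(iii), hence is bounded on every ball $[C]_{r+1}$, but it fails (iv) outright, since a non-Cohen--Macaulay module of finite projective dimension (e.g.\ $R/(x)$ for a regular element $x$) has complexity zero. Conversely, the Cohen--Macaulay defect $\dim R-\depth_RN$ detects non-CM-ness but is bounded above by $\dim R$ for \emph{every} module, so it can never ``grow without bound,'' and it is not subadditive in the direction you need; so no single invariant of the kind you describe is exhibited. The paper's route is genuinely different. Complexity enters only through Bergh's notion of reducible complexity, used in Lemma \ref{main3} to replace a module of finite positive CI-dimension in $\X$ by a module of finite positive \emph{projective} dimension in $\X$. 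The contradiction is then produced in Proposition \ref{14}: from a projective-dimension-one module one forces $\tr_R(R/\m^i)$ into $\X$ for all $i>0$, and the quantity that is uniformly controlled on balls is not a number but an annihilator ideal, namely $\Ann_S\Ext_S^j(-,S)$ computed over a Gorenstein cover $S$ of $\widehat R$ (Lemma \ref{b}). The family $\{\tr_R(R/\m^i)\}_{i>0}$ --- not the syzygies of $M$ --- violates the resulting uniform bound, because $\bigcap_{i>0}\Ann_S\bigl(\Hom_S(\widehat R,S)/\n^i\Hom_S(\widehat R,S)\bigr)=\Ann_S\Hom_S(\widehat R,S)$ is contained in a minimal prime of $S$, at which all positive Ext against $S$ vanishes. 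Without this mechanism, or some substitute in which your properties (ii) and (iv) genuinely coexist in one invariant, your outline does not yield a proof even over a complete intersection, let alone of the conjecture in general.
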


\begin{rem}
The converse of Conjecture \ref{2} also seems to be true.
We consider this in Section \ref{section5}.
\end{rem}

Let $\X$ be a subcategory of $\mod R$.
We denote by $\res\X$ (or $\res_R\X$) the {\em resolving closure} of $\X$, namely, the smallest resolving subcategory of $\mod R$ containing $\X$.
If $\X$ consists of a single module $M$, then we simply denote it by $\res M$ (or $\res_RM$).
For a prime ideal $\p$ of $R$, we denote by $\X_\p$ the subcategory of $\mod R_\p$ consisting of all modules of the form $X_\p$, where $X\in\X$.
The first main result of this paper is the following theorem.

\begin{thm}\label{3}
Let $R$ be a commutative Noetherian ring.
Let $\X$ be a resolving subcategory of $\mod R$.
Suppose that there exist a prime ideal $\p$ of $R$ with $\height\p>0$ and an $R_\p$-module $M$ with $0\ne M\in\add_{R_\p}\X_\p$ which satisfy one of the following conditions.
\begin{enumerate}[\rm(1)]
\item\label{i}
$\p M=0$.
\item\label{n}
$0<\Gdim_{R_\p}M=n<\infty$ and $\syz_{R_\p}^{-2}\syz_{R_\p}^nM\in\add_{R_\p}\X_\p$.
\item\label{s}
$0<\Gdim_{R_\p}M<\infty$ and $\res_{R_\p}(\syz_{R_\p}^nM)$ is a thick subcategory of $\TR(R_\p)$ for some $n\ge0$.
\item\label{y}
$0<\CIdim_{R_\p}M<\infty$.
\end{enumerate}
Then $\X$ has infinite radius.
\end{thm}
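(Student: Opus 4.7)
The plan is to argue by contradiction: assume $\radi\X = r < \infty$, so $\X \subseteq [C]_{r+1}^R$ for some $R$-module $C$, and deduce that none of the four hypotheses can hold for any $\p$ with $\height\p > 0$.

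First I would reduce to the local case. The operations $\add$, $\syz^i$ and short exact sequences all commute with localization at $\p$, so $[C]_{r+1}^R$ localizes to $[C_\p]_{r+1}^{R_\p}$, and the given $M \in \add_{R_\p}\X_\p$ lies in $[C_\p]_{r+1}^{R_\p}$. Replacing $(R,\X,C)$ by $(R_\p,\X_\p,C_\p)$, I may assume $(R,\m,k)$ is local with $\dim R \ge \height\p > 0$ and $M \in [C]_{r+1}$ satisfying one of (1)--(4).

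The core of the argument is a \emph{Ghost Lemma} adapted to the asymmetric closure $[\ \cdot\ ]$: if $N \in [C]_{r+1}$, then no composition of $r+1$ suitably chosen ``$C$-ghost'' maps (i.e.\ maps vanishing on $\Ext_R^{\ge 0}(C,-)$ against a fixed target) starting at $N$ can be nonzero. This is proved by induction on $r$ using the pushout/pullback description of $\bullet$ in Proposition~\ref{1}: given $0 \to X \to Y \to Z \to 0$ with $X \in [C]_r$, $Z \in [C]$, and $N$ a summand of $Y$, any ghost composition out of $N$ is pushed through the sequence and killed by the inductive hypothesis on $X$. With the lemma in place, each case of the theorem supplies a sequence of $r+2$ ghosts whose total composition is nonzero, contradicting finiteness of the radius. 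In case (1), the vanishing $\m M = 0$ together with $\dim R > 0$ lets one use multiplication by a system of parameters on the minimal free resolution of $M$ to manufacture ghost compositions of arbitrary length, in the spirit of Rouquier's proof of $\dim D^b(\mod R) \ge \dim R$. Cases (2)--(4) first replace $M$ by the nonzero totally reflexive syzygy $\syz^n M$ provided by finiteness of $\Gdim_{R_\p} M$ or $\CIdim_{R_\p} M$; the thickness hypothesis in (3) and the cosyzygy hypothesis $\syz^{-2}\syz^n M \in \add\X$ in (2) let the ghost construction be executed inside $\TR(R)$ using \emph{both} syzygies and cosyzygies (Proposition~\ref{12}), while in (4) a quasi-deformation $R \to R' \leftarrow S$ transports the ghost chain built over the complete-intersection cover $S$ back down to $R$, using the finite complexity implied by $\CIdim_R M < \infty$.

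The principal obstacle is the Ghost Lemma itself in this module-theoretic, asymmetric setting: without triangulated cones on the ghost maps, each inductive step must be executed by concrete pushout/pullback diagrams as in the proof of Proposition~\ref{1}(2), while carefully tracking the stray free summands absorbed into $[\ \cdot\ ]$ at each stage. A secondary and, I expect, technically heaviest difficulty is case (4), where the quasi-deformation has to be made compatible with the syzygy operation at $\p$ so that the nonvanishing ghost chain produced over $S$ descends to a nonvanishing chain over $R_\p$; this interplay between change of rings and the ghost calculus is where the argument will be most delicate.
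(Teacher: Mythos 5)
Your reduction to the local case is exactly right (this is Remark \ref{rrr} in the paper), but beyond that the proposal is a program rather than a proof, and the program has real gaps. The central object, the ``Ghost Lemma'' for the closure $[\,\cdot\,]_{r+1}$, is never formulated precisely, let alone proved: because $[\C]$ contains \emph{all} syzygies $\syz^iX$ together with free summands, a ``$C$-ghost'' would have to kill $\Ext^j(C,-)$ in every degree simultaneously, and in an abelian category there is no cone through which to factor a ghost map, so the inductive step you describe (``pushed through the sequence and killed by the inductive hypothesis'') does not follow from the pushout/pullback manipulations of Proposition \ref{1} alone. Equally seriously, you never construct the required chains of $r+2$ ghosts with nonzero composite in any of the four cases; and in case (4) the quasi-deformation $R\to R'\leftarrow S$ changes the ring, so a ghost chain built over $S$ lives in $\mod S$ while $\X$ lives in $\mod R$ --- the proposal offers no mechanism for descending it, nor for transporting the resolving subcategory $\X$ up to $R'$.

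The paper avoids all of this by replacing the ghost calculus with \emph{uniform annihilator bounds}: if $M\in[C]_n$ then $\Ann_R\Gamma_I(M)\supseteq(\Ann_R\Gamma_I(R)\cdot\Ann_R\Gamma_I(C))^n$ (Lemma \ref{16}), and over a Gorenstein cover $S\twoheadrightarrow\widehat R$ one has $\bigcap_{i>0}\Ann_S\Ext_S^i(M,S)\supseteq\bigl(\prod_{i=1}^{d}\Ann_S\Ext_S^i(\widehat R,S)\cdot\Ann_S\Ext_S^i(\widehat C,S)\bigr)^{n}$ (Lemma \ref{b}). Case (1) then follows because $R/\m^i\in\X$ has Loewy length $i$, unbounded, whereas the first bound forces $\LL(\Gamma_\m(X))$ to be bounded on any ball of finite radius (Proposition \ref{15}). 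Cases (2)--(4) are all reduced --- via Auslander--Buchweitz-style pushouts with cosyzygies for (2) and (3), and via Bergh's reducible complexity for (4) --- to the single statement that $\X$ contains a module of projective dimension one; from such a module the paper extracts $\tr k\in\X$, hence $\tr(R/\m^i)\in\X$ for all $i$, and the second annihilator bound applied to this infinite family yields the contradiction (Proposition \ref{14}). To salvage your approach you would need to state and prove the Ghost Lemma for $[\,\cdot\,]_n$ and exhibit the explicit nonvanishing chains in each case; as written, the proposal does not constitute a proof.
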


The proof of this theorem will be given in the next section.
As a direct consequence of the above theorem, we obtain two cases in which our conjecture holds true.

\begin{cor}\label{8}
Conjecture \ref{2} is true if
\begin{enumerate}[\rm(1)]
\item
$R$ is a complete intersection, or
\item
$R$ is Gorenstein, and every resolving subcategory of $\mod R$ contained in $\CM(R)$ is a thick subcategory of $\CM(R)$.
\end{enumerate}
\end{cor}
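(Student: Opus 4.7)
The plan is to argue by contradiction: assume $\X$ is a resolving subcategory of $\mod R$ with $\radi\X<\infty$ but containing some module $X$ that is not maximal Cohen-Macaulay. Since every nonzero module over a zero-dimensional Cohen-Macaulay local ring is automatically maximal Cohen-Macaulay, we may assume $\dim R>0$, so that $\height\m>0$. The idea is then to specialize Theorem \ref{3} to $\p=\m$ (so that $R_\p=R$ and $\X_\p=\X$) and $M=X$, and to verify the relevant homological hypothesis in each case; either case will produce a contradiction with $\radi\X<\infty$.

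In case (1), where $R$ is a complete intersection, item (8) of the remark on homological dimensions gives $\CIdim_R X<\infty$. Because $X$ is not maximal Cohen-Macaulay, $\depth_R X<\depth R$; combined with the Auslander--Bridger-type equality $\CIdim_R X=\depth R-\depth_R X$ from item (5), this forces $0<\CIdim_R X<\infty$, so part (\ref{y}) of Theorem \ref{3} applies and yields $\radi\X=\infty$. In case (2), $R$ is Gorenstein, so item (8) gives $n:=\Gdim_R X<\infty$, and the same reasoning yields $n=\depth R-\depth_R X>0$. By item (7), $\syz^nX$ is totally reflexive, and it is nonzero (otherwise $\pd_R X<n$, hence $\Gdim_R X\le\pd_R X<n$). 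Since $R$ is Gorenstein we have $\TR(R)=\CM(R)$, so $\res_R(\syz^nX)$ is a resolving subcategory of $\mod R$ contained in $\CM(R)$; the standing hypothesis of (2) then makes it thick in $\CM(R)=\TR(R)$, verifying part (\ref{s}) of Theorem \ref{3}, which again gives $\radi\X=\infty$.

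The argument is essentially assembly; the only non-routine ingredients are the Auslander--Bridger-type equalities (which guarantee strict positivity of the two homological dimensions) and, in case (2), the invocation of the thickness hypothesis at the specific subcategory $\res_R(\syz^nX)\subseteq\CM(R)$. Taking $\p=\m$ sidesteps any need to localize $\X$ or to transfer the thickness hypothesis down the spectrum, so no delicate descent issues arise; the main point is simply recognizing which item of Theorem \ref{3} to cite and checking the nonvanishing of $\syz^nX$ in case (2).
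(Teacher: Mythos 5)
Your proposal is correct and follows essentially the same route as the paper: reduce to $\dim R>0$, take $\p=\m$ and $M=X$, use $\CIdim_RX=\depth R-\depth_RX>0$ (resp.\ $\Gdim_RX=\depth R-\depth_RX>0$) to verify hypothesis (4) (resp.\ (3)) of Theorem \ref{3}. The extra checks you record (nonvanishing of $\syz^nX$, containment of $\res_R(\syz^nX)$ in $\CM(R)$) are exactly the implicit steps in the paper's argument.
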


\begin{proof}
Conjecture \ref{2} trivially holds in the case where $R$ is Artinian, so let $(R,\m,k)$ be a Cohen-Macaulay local ring of positive dimension.
Then we have $\height\m>0$.
Let $\X$ be a resolving subcategory of $\mod R$, and suppose that $\X$ contains a non-Cohen-Macaulay $R$-module $M$.

(1) We have $0<\dim R-\depth_RM=\depth R-\depth_RM=\CIdim_RM<\infty$, and Theorem \ref{3}(4) implies that $\X$ has infinite radius.

(2) We have $0<n:=\dim R-\depth_RM=\depth R-\depth_RM=\Gdim_RM<\infty$.
The module $\syz_R^nM$ is Cohen-Macaulay, and by assumption $\res_R(\syz_R^nM)$ is a thick subcategory of $\CM(R)=\TR(R)$.
Theorem \ref{3}(3) implies that $\X$ has infinite radius.
\end{proof}

%%%%%%%%%%%%%%%%%%%%%%%%%%%%%%%%%%%%%%%%%%%%%%%%%%%%%%%%%
\section{Proof of Theorem I}\label{proofsec}

This section is devoted to give the proof of Theorem \ref{3} (hence of Theorem I from Introduction), which we break up into several parts.
Most of them also reveal properties of subcategories of $\mod R$ which are interesting in their own right.

First of all, we make a remark to reduce our theorem to the local case.

\begin{rem}\label{rrr}
Let $C$ be an $R$-module, $n\ge0$ an integer and $\p$ a prime ideal of $R$.
Then for a subcategory $\X$ of $\mod R$ the implication
$$
\X\subseteq[C]_n^R\quad\Rightarrow\quad\add_{R_\p}\X_\p\subseteq[C_\p]_n^{R_\p}
$$
holds.
It follows from \cite[Lemma 4.8]{stcm} that $\add_{R_\p}\X_\p$ is a resolving subcategory of $\mod R_\p$.
(The ring $R$ in \cite[Lemma 4.8]{stcm} is assumed to be local, but its proof does not use this assumption, so it holds for an arbitrary commutative Noetherian ring.)
Hence, to prove Theorem \ref{3}, without loss of generality we can assume $(R,\p)$ is a local ring with $\dim R>0$ and $M$ is an $R$-module with $0\ne M\in\X$.
\end{rem}

\subsection{Proof of Theorem \ref{3}(1)}

First, we investigate the annihilators of torsion submodules.
For an ideal $I$ of $R$ and an $R$-module $M$, we denote by $\Gamma_I(M)$ the {\em $I$-torsion submodule} of $M$.
Recall that $\Gamma_I(M)$ is by definition the subset of $M$ consisting of all elements that are annihilated by some power of $I$, and the assignment $M\mapsto\Gamma_I(M)$ defines a left exact additive covariant functor $\Gamma_I:\mod R\to\mod R$.

\begin{lem}\label{16}
Let $I$ be an ideal of $R$.
Let $C,M$ be $R$-modules and $n\ge1$ an integer.
If $M$ belongs to $[C]_n$, then one has
$
\Ann_R\Gamma_I(M)\supseteq(\Ann_R\Gamma_I(R)\cdot\Ann_R\Gamma_I(C))^n.
$
\end{lem}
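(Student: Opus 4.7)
The plan is to induct on $n$, using Proposition \ref{1}(1) to peel off one factor at each step and the left exactness of $\Gamma_I$ to control annihilators.

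For the base case $n=1$, membership $M\in[C]$ means $M$ is a direct summand of some $R^{\oplus p}\oplus\bigoplus_{i=0}^{m}(\syz^iC)^{\oplus q_i}$. Functoriality of $\Gamma_I$ makes $\Gamma_I(M)$ a direct summand of the corresponding sum, so it suffices to bound $\Ann_R\Gamma_I(R)$ and $\Ann_R\Gamma_I(\syz^iC)$ from below by $\Ann_R\Gamma_I(R)\cdot\Ann_R\Gamma_I(C)$. For $i=0$ this is immediate, and for $i\ge 1$ I would apply $\Gamma_I$ to a short exact sequence $0\to\syz^iC\to R^{\oplus a}\to\syz^{i-1}C\to0$ (iterating as needed). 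Left exactness embeds $\Gamma_I(\syz^iC)$ into $\Gamma_I(R)^{\oplus a}$, whence $\Ann_R\Gamma_I(R)\subseteq\Ann_R\Gamma_I(\syz^iC)$. All of these annihilators certainly contain the product $\Ann_R\Gamma_I(R)\cdot\Ann_R\Gamma_I(C)$.

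For the inductive step with $n\ge2$, I would write $[C]_n=[C]_{n-1}\bullet[C]$ and apply Proposition \ref{1}(1) to produce an exact sequence $0\to X\to Z\to Y\to0$ with $X\in[C]_{n-1}$, $Y\in[C]$, and $M$ a direct summand of $Z$. Applying the left exact functor $\Gamma_I$ yields $0\to\Gamma_I(X)\to\Gamma_I(Z)\to\Gamma_I(Y)$. The standard fact that annihilators multiply along short exact sequences (and hence along such a left-exact sequence: if $\alpha\in\Ann_R\Gamma_I(Y)$ and $\beta\in\Ann_R\Gamma_I(X)$, then $\alpha\Gamma_I(Z)$ lands in $\Gamma_I(X)$, so $\beta\alpha$ annihilates $\Gamma_I(Z)$) gives
\[
\Ann_R\Gamma_I(X)\cdot\Ann_R\Gamma_I(Y)\ \subseteq\ \Ann_R\Gamma_I(Z)\ \subseteq\ \Ann_R\Gamma_I(M),
\]
where the last inclusion uses that $M$ is a direct summand of $Z$. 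Plugging in the inductive bound $(\Ann_R\Gamma_I(R)\cdot\Ann_R\Gamma_I(C))^{n-1}\subseteq\Ann_R\Gamma_I(X)$ and the base case bound for $\Ann_R\Gamma_I(Y)$ gives exactly the desired $n$-th power inclusion.

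There is no serious obstacle here; the only subtle point is the base case, where I have to ensure that a syzygy $\syz^iC$ (for $i\ge1$) really contributes $\Ann_R\Gamma_I(R)$ rather than $\Ann_R\Gamma_I(C)$ to the annihilator, which is precisely why the formula is stated with a product of two annihilators rather than just $\Ann_R\Gamma_I(C)$. Everything else is a routine, formally multiplicative bookkeeping along one short exact sequence per inductive step.
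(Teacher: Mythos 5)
Your proof is correct and follows essentially the same route as the paper's: the same induction on $n$, the same use of Proposition \ref{1}(1) together with left exactness of $\Gamma_I$ in the inductive step, and the same observation in the base case that $\syz^iC$ embeds into a free module so that its $I$-torsion is controlled by $\Ann_R\Gamma_I(R)$. The only differences are notational.
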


\begin{proof}
Let us prove the lemma by induction on $n$.

When $n=1$, the module $M$ is isomorphic to a direct summand of $(R\oplus\bigoplus_{i=0}^a\syz^iC)^{\oplus b}$ for some $a,b\ge0$.
Hence $\Gamma_I(M)$ is isomorphic to a direct summand of $(\Gamma_I(R)\oplus\bigoplus_{i=0}^a\Gamma_I(\syz^iC))^{\oplus b}$.
For $i\ge1$, the syzygy $\syz^iC$ is a submodule of some free module $R^{\oplus c_i}$, and $\Gamma_I(\syz^iC)$ is a submodule of $\Gamma_I(R)^{\oplus c_i}$, which implies that $\Ann_R\Gamma_I(\syz^iC)$ contains $\Ann_R\Gamma_I(R)$.
Hence we obtain
\begin{align*}
\Ann_R\Gamma_I(M)
& \supseteq \Ann_R\Gamma_I(R)\cap(\textstyle\bigcap_{i=0}^a\Ann_R\Gamma_I(\syz^iC)) \\
& = \Ann_R\Gamma_I(R)\cap\Ann_R\Gamma_I(C)
\supseteq \Ann_R\Gamma_I(R)\cdot\Ann_R\Gamma_I(C).
\end{align*}

Let $n\ge2$.
Then $M$ is in $[C]_n=[C]_{n-1}\bullet[C]$, and Proposition \ref{1}(1) says that there is an exact sequence $0 \to X \to Y \to Z\to 0$ with $X\in[C]_{n-1}$ and $Z\in[C]$ such that $M$ is a direct summand of $Y$.
We have an exact sequence $0 \to \Gamma_I(X) \to \Gamma_I(Y) \to \Gamma_I(Z)$, and therefore we obtain
\begin{align*}
\Ann_R\Gamma_I(M)
& \supseteq \Ann_R\Gamma_I(Y)
\supseteq \Ann_R\Gamma_I(X)\cdot\Ann_R\Gamma_I(Z) \\
& \supseteq (\Ann_R\Gamma_I(R)\cdot\Ann_R\Gamma_I(C))^{n-1}\cdot(\Ann_R\Gamma_I(R)\cdot\Ann_R\Gamma_I(C)) \\
& = (\Ann_R\Gamma_I(R)\cdot\Ann_R\Gamma_I(C))^n,
\end{align*}
which is what we want.
\end{proof}

Let $(R,\m)$ be a local ring, and let $M$ be an $R$-module.
We denote by $\LL(M)$ the {\em Loewy length} of $M$, which is by definition the infimum of the intergers $n\ge0$ such that $\m^nM=0$.
Obviously, $\LL(M)$ is finite if and only if $M$ has finite length.
There is a relationship between finite radius and Loewy length:

\begin{prop}\label{15}
Let $(R,\m)$ be a local ring, and let $\X$ be a resolving subcategory of $\mod R$.
If $\radi\X<\infty$, then $\sup_{X\in\X}\{\LL(\Gamma_\m(X))\}<\infty$.
\end{prop}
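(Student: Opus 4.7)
The plan is to combine the hypothesis of finite radius with Lemma \ref{16} (applied to $I = \m$) and the elementary fact that finitely generated $\m$-torsion modules have bounded annihilators.

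First I would unpack the definition of $\radi\X < \infty$: there exist an integer $n \ge 0$ and an $R$-module $C$ such that $\X \subseteq [C]_{n+1}$. Next, since $R$ is Noetherian, the $\m$-torsion submodules $\Gamma_\m(R) \subseteq R$ and $\Gamma_\m(C) \subseteq C$ are both finitely generated, and every element in each is annihilated by some power of $\m$. By finite generation, there exist integers $a, b \ge 0$ with $\m^a \cdot \Gamma_\m(R) = 0$ and $\m^b \cdot \Gamma_\m(C) = 0$, so $\Ann_R \Gamma_\m(R) \supseteq \m^a$ and $\Ann_R \Gamma_\m(C) \supseteq \m^b$.

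Now for any $X \in \X$, the containment $X \in [C]_{n+1}$ allows us to apply Lemma \ref{16} to conclude
\[
\Ann_R \Gamma_\m(X) \supseteq (\Ann_R \Gamma_\m(R) \cdot \Ann_R \Gamma_\m(C))^{n+1} \supseteq (\m^a \cdot \m^b)^{n+1} = \m^{(a+b)(n+1)}.
\]
This forces $\m^{(a+b)(n+1)} \Gamma_\m(X) = 0$, hence $\LL(\Gamma_\m(X)) \le (a+b)(n+1)$. Taking the supremum over $X \in \X$ yields the claimed uniform bound.

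There is no real obstacle here: Lemma \ref{16} does all the heavy lifting, and the only additional input is the finite-generation argument bounding the annihilators of $\Gamma_\m(R)$ and $\Gamma_\m(C)$ by powers of $\m$. The bound $(a+b)(n+1)$ depends only on $n$ and on the chosen center $C$, not on the particular module $X \in \X$, which is precisely the uniformity asserted by the proposition.
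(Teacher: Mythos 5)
Your proof is correct and follows essentially the same route as the paper: take a ball $[C]_{n+1}$ containing $\X$, apply Lemma \ref{16} with $I=\m$, and bound $\Ann_R\Gamma_\m(R)$ and $\Ann_R\Gamma_\m(C)$ below by powers of $\m$ (the paper phrases this as $\Gamma_\m(R)$ and $\Gamma_\m(C)$ having finite length, hence finite Loewy length, which is the same observation). The resulting uniform bound $(a+b)(n+1)$ is exactly the one in the paper.
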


\begin{proof}
Put $r=\radi\X$.
By definition, there exists an $R$-module $C$ such that $[C]_{r+1}$ contains $\X$.
Let $X$ be a module in $\X$.
It follows from Lemma \ref{16} that the annihilator $\Ann\Gamma_\m(X)$ contains the ideal $(\Ann\Gamma_\m(R)\cdot\Ann\Gamma_\m(C))^{r+1}$.
As $\Gamma_\m(R)$ and $\Gamma_\m(C)$ have finite length, they have finite Loewy length.
Set $a=\LL(\Gamma_\m(R))$ and $b=\LL(\Gamma_\m(C))$.
Then $\Ann\Gamma_\m(X)$ contains $\m^{(a+b)(r+1)}$, which means that $\LL(\Gamma_\m(X))$ is at most $(a+b)(r+1)$.
Since the number $(a+b)(r+1)$ is independent of the choice of $X$, we have $\sup_{X\in\X}\{\LL(\Gamma_\m(X))\}\le(a+b)(r+1)<\infty$.
\end{proof}

The following is the essential part of Theorem \ref{3}(1).

\begin{thm}\label{t1}
Let $(R,\m,k)$ be a local ring of positive dimension.
Let $\X$ be a resolving subcategory of $\mod R$.
If $\X$ contains $k$, then $\X$ has infinite radius.
\end{thm}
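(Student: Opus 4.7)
The plan is to apply the contrapositive of Proposition \ref{15}: if I can exhibit a family of modules in $\X$ whose $\m$-torsion has unbounded Loewy length, then $\X$ must have infinite radius. The natural choice is the family $\{R/\m^n\}_{n\ge 1}$.

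First, I would show by induction on $n$ that $R/\m^n$ lies in $\X$. The case $n=1$ is the hypothesis $k\in\X$. For the inductive step, consider the short exact sequence
$$
0 \to \m^{n-1}/\m^n \to R/\m^n \to R/\m^{n-1} \to 0.
$$
The left-hand term is a finite-dimensional $k$-vector space, hence a finite direct sum of copies of $k$, and resolving subcategories are closed under finite direct sums (apply (R3) to the split sequence $0 \to k \to k\oplus k^{\oplus (m-1)} \to k^{\oplus(m-1)} \to 0$ and induct). The right-hand term lies in $\X$ by the inductive hypothesis. Closure of $\X$ under extensions then gives $R/\m^n\in\X$.

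Next I would observe that $\Gamma_\m(R/\m^n)=R/\m^n$, since $\m^n$ annihilates $R/\m^n$, and that its Loewy length is exactly $n$. This latter claim amounts to $\m^{n-1}\neq \m^n$ for every $n\ge 1$. If instead $\m^{n-1}=\m^n=\m\cdot\m^{n-1}$, Nakayama's lemma forces $\m^{n-1}=0$, so $R$ would be Artinian, contradicting $\dim R>0$. Hence $\LL(\Gamma_\m(R/\m^n))=n$, and the supremum of $\LL(\Gamma_\m(X))$ over $X\in\X$ is infinite.

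By Proposition \ref{15}, this supremum being infinite is incompatible with $\radi\X<\infty$, so $\radi\X=\infty$, as desired. I do not anticipate a genuine obstacle here: the argument is a direct assembly of the resolving closure properties with the Loewy-length bound from Proposition \ref{15}, and the only place the hypothesis $\dim R>0$ enters is the Nakayama step ensuring that the powers of $\m$ strictly descend.
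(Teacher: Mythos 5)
Your proof is correct and follows essentially the same route as the paper: build $R/\m^n\in\X$ by induction using the extension $0\to\m^{n-1}/\m^n\to R/\m^n\to R/\m^{n-1}\to 0$, note $\LL(\Gamma_\m(R/\m^n))=n$ (the paper leaves the Nakayama justification implicit), and conclude by Proposition \ref{15}. No gaps.
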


\begin{proof}
We claim that $R/\m^i$ belongs to $\X$ for all integers $i>0$.
Indeed, there is an exact sequence
$
0 \to \m^{i-1}/\m^i \to R/\m^i \to R/\m^{i-1} \to 0,
$
and the left term belongs to $\X$ as it is a $k$-vector space.
Induction on $i$ shows the claim.

We have $\LL(\Gamma_\m(R/\m^i))=\LL(R/\m^i)=i$, where the second equality follows from the assumption that $\dim R>0$.
Therefore it holds that
$
\sup_{X\in\X}\{\LL(\Gamma_\m(X))\}\ge\sup_{i>0}\{\LL(\Gamma_\m(R/\m^i))\}=\sup_{i>0}\{ i\}=\infty,
$
which implies $\radi\X=\infty$ by Proposition \ref{15}.
\end{proof}

Now, let us prove Theorem \ref{3}(1).
By Remark \ref{rrr}, we may assume that $(R,\p)$ is a local ring with $\dim R>0$ and that $M$ is a nonzero $R$-module in $\X$.
The assumption $\p M=0$ says that $M$ is a nonzero $k$-vector space, where $k=R/\p$ is the residue field of $R$.
Since $\X$ is closed under direct summands, $k$ belongs to $\X$.
Theorem \ref{t1} yields $\radi\X=\infty$.

\subsection{Proof of Theorem \ref{3}(2)(3)}

Establishing several preliminary lemmas and propositions are necessary, which will also be used in the proof of Theorem \ref{3}(4).

We begin with stating an elementary lemma, whose proof we omit.

\begin{lem}\label{a}
Let $(R,\m,k)$ be a local ring.
Let $0 \to L \to M \to N \to 0$ be an exact sequence of $R$-modules.
Then one has
$
\inf\{\depth L,\depth N\}=\inf\{\depth M,\depth N\}.
$
\end{lem}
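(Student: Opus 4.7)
The plan is to reduce the equality to the standard depth lemma for a short exact sequence, which follows from the long exact sequence in local cohomology $H^i_\m(-)$. Setting $a=\depth L$, $b=\depth M$, $c=\depth N$, the claim becomes $\min(a,c)=\min(b,c)$, and only two of the three classical depth inequalities are needed: $b\ge\min(a,c)$ and the slightly sharper $a\ge\min(b,c+1)$.

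First I would handle $\min(b,c)\ge\min(a,c)$, which is almost immediate. Since $b\ge\min(a,c)$ by the first depth inequality and trivially $c\ge\min(a,c)$, the minimum of $b$ and $c$ is at least $\min(a,c)$.

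Next I would prove the reverse inequality $\min(a,c)\ge\min(b,c)$ by splitting on whether $c\ge b$ or $c<b$. If $c\ge b$, then $\min(b,c+1)=b$, so the second depth inequality gives $a\ge b$, and combining with $c\ge b$ yields $\min(a,c)\ge b=\min(b,c)$. If instead $c<b$, then $c+1\le b$, hence $\min(b,c+1)=c+1$, so $a\ge c+1>c$, and thus $\min(a,c)=c=\min(b,c)$.

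The only real subtlety is to remember to use the sharper bound $a\ge\min(b,c+1)$ rather than the weaker $a\ge\min(b,c)$; without the ``$+1$'' the case $c<b$ would fail. Beyond this bookkeeping point the argument is formal, so there is no substantial obstacle — consistent with the authors' choice to omit the proof as elementary.
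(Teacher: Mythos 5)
Your argument is correct: the two standard depth inequalities $\depth M\ge\inf\{\depth L,\depth N\}$ and $\depth L\ge\inf\{\depth M,\depth N+1\}$ (both immediate from the long exact sequence of $\Ext_R^\ast(k,-)$ or of local cohomology, with the convention $\depth 0=\infty$ covering degenerate cases) do yield the stated equality exactly as you describe, and your observation that the ``$+1$'' in the second inequality is what makes the case $\depth N<\depth M$ work is the right point to flag. The paper omits the proof as elementary, and this is precisely the argument the authors have in mind, so there is nothing further to compare.
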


For an ideal $I$ of $R$, we denote by $V(I)$ (respectively, $D(I)$) the closed (respectively, open) subset of $\Spec R$ defined by $I$ in the Zariski topology, namely, $V(I)$ is the set of prime ideals containing $I$ and $D(I)=\Spec R\setminus V(I)$.
For an $R$-module $M$ we denote by $\NF(M)$ the {\em nonfree locus} of $M$, that is, the set of prime ideals $\p$ of $R$ such that the $R_\p$-module $M_\p$ is nonfree.
It is well-known that $\NF(M)$ is a closed subset of $\Spec R$.

The next result builds, out of each module in a resolving subcategory and each point in its nonfree locus, another module in the same resolving subcategory whose nonfree locus coincides with the closure of the point.
Such a construction has already been given in \cite[Theorem 4.3]{res}, but we need in this paper a more detailed version.
Indeed, the following lemma yields a generalization of \cite[Theorem 4.3]{res}.

\begin{lem}\label{c}
Let $M$ be an $R$-module.
For every $\p\in\NF(M)$ there exists $X\in\res M$ satisfying $\NF(X)=V(\p)$ and $\depth X_\q=\inf\{\depth M_\q,\depth R_\q\}$ for all $\q\in V(\p)$.
\end{lem}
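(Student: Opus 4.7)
The plan is to build $X$ by Noetherian induction on the closed set $\NF(M)\subseteq\Spec R$, shrinking the nonfree locus one prime at a time while using Lemma \ref{a} to track the depth formula. The base case $\NF(M)=V(\p)$ is handled by taking $X=M\oplus R^n$ for a suitable $n$: since $\depth(A\oplus B)=\inf\{\depth A,\depth B\}$, this automatically yields $\depth X_\q=\inf\{\depth M_\q,\depth R_\q\}$ at every $\q\in V(\p)$.

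For the inductive step suppose $\NF(M)\supsetneq V(\p)$. Since $\NF(M)$ is closed and specialization-closed it contains $V(\p)$, so we may choose a prime $\q$ minimal in $\NF(M)\setminus V(\p)$ (hence $\q\not\supseteq\p$) and an element $x\in\p\setminus\q$. Take the canonical sequence $0\to\syz M\xrightarrow{\iota}F\to M\to 0$ from a minimal free cover and form the pushout $M'$ along multiplication by $x$ on $\syz M$, producing an exact sequence
$$0\to\syz M\to M'\to M\to 0,$$
with $M'\in\res M$ since $\res M$ is closed under extensions and contains both $M$ and $\syz M$. At any prime $\q'$ with $x\notin\q'$, multiplication by $x$ on $(\syz M)_{\q'}$ is an isomorphism, so the localized pushout square forces $F_{\q'}\cong M'_{\q'}$; in particular $M'_{\q'}$ is free. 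Hence $\NF(M')\subseteq\NF(M)\cap V(x)$, a closed set strictly smaller than $\NF(M)$ that still contains $V(\p)$ once one verifies that $M'_\p$ remains nonfree. The induction hypothesis applied to $M'$ then gives $X\in\res M'\subseteq\res M$ with $\NF(X)=V(\p)$.

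The depth formula is preserved through the induction by combining Lemma \ref{a} applied to $0\to(\syz M)_\q\to M'_\q\to M_\q\to 0$ for $\q\in V(\p)$, which yields $\inf\{\depth(\syz M)_\q,\depth M_\q\}=\inf\{\depth M'_\q,\depth M_\q\}$, with the standard estimate $\depth(\syz M)_\q\ge\inf\{\depth R_\q,\depth M_\q+1\}$; a short case analysis then gives $\depth M'_\q=\inf\{\depth M_\q,\depth R_\q\}$, after possibly adjoining a free summand to $X$ to cap the depth at $\depth R_\q$ when $\depth M_\q>\depth R_\q$. The principal obstacle is confirming that $M'_\p$ stays nonfree after each pushout, so that the induction terminates exactly at $V(\p)$: when the class $x\cdot\eta\in\Ext^1_R(M,\syz M)$ vanishes at $\p$, the local sequence splits and $M'_\p\cong(\syz M)_\p\oplus M_\p$ is nonfree, while in the nonvanishing case a direct examination of generators and relations of $M'_\p$, or a strengthening of the construction (iterating the pushout, or taking $M'\oplus M$ before the next step to guarantee $\p$ stays in the nonfree locus), is needed to close the argument.
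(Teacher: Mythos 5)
Your overall strategy is the same as the paper's: shrink $\NF(M)$ by repeatedly forming the pushout of the minimal free presentation $0\to\syz M\to F\to M\to 0$ along multiplication by $x\in\p\setminus\r$, observing that the resulting module $N$ is free off $V(x)$. However, there are two genuine gaps, both of which the paper closes with one homological observation that your argument is missing. First, the nonfreeness of $N_\q$ for $\q\in V(\p)$, which you explicitly leave open: neither of your proposed patches works (replacing $N$ by $N\oplus M$ restores $\NF(M)$ and destroys the strict shrinking of the nonfree locus, so the Noetherian induction no longer terminates), and the non-split case cannot be waved away, since a non-split extension of $M_\q$ by $(\syz M)_\q$ can perfectly well be free --- the presentation $0\to\syz M\to F\to M\to 0$ itself is such an extension. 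Second, the depth computation: Lemma \ref{a} applied to $0\to(\syz M)_\q\to N_\q\to M_\q\to 0$ only yields $\inf\{\depth N_\q,\depth M_\q\}=\inf\{\depth R_\q,\depth M_\q\}$. When $\depth M_\q<\depth R_\q$ this says only $\depth N_\q\ge\depth M_\q$ and gives no upper bound, so it does not pin down $\depth N_\q$; adjoining a free summand only caps the depth at $\depth R_\q$ and cannot repair an overshoot above $\depth M_\q$.

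The point you are missing is that the extension class of the bottom row is $x$ times that of the top row, and $x$ lies in every $\q\in V(\p)$. Consequently, for such $\q$ the connecting maps in the long exact sequences of both $\Tor^{R_\q}_*(\kappa(\q),-)$ and $\Ext^*_{R_\q}(\kappa(\q),-)$ attached to $0\to(\syz M)_\q\to N_\q\to M_\q\to 0$ are multiplication by $x$ composed with the connecting maps of the split-free top row, hence are zero. The resulting short exact sequences give, on the Tor side, a surjection $\Tor_1^{R_\q}(\kappa(\q),N_\q)\twoheadrightarrow\Tor_1^{R_\q}(\kappa(\q),M_\q)\ne0$, so $N_\q$ is nonfree (this is the content of \cite[Proposition 4.2]{res}, which the paper invokes); and, on the Ext side, the exact equality $\depth N_\q=\inf\{\depth(\syz M)_\q,\depth M_\q\}$, which combined with Lemma \ref{a} applied to the free presentation gives $\depth N_\q=\inf\{\depth R_\q,\depth M_\q\}$. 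Without this use of $x\in\q$ your induction neither terminates at $V(\p)$ nor controls the depth.
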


\begin{proof}
Note that $V(\p)$ is contained in $\NF(M)$.
If $V(\p)=\NF(M)$, then we can take $X:=M\oplus R$.
Suppose $V(\p)$ is strictly contained in $\NF(M)$.
Then there is a prime ideal $\r$ in $\NF(M)$ that is not in $V(\p)$.
Choose an element $x\in\p\setminus\r$.
By \cite[Proposition 4.2]{res}, we have a commutative diagram with exact rows
$$
\begin{CD}
0 @>>> \syz M @>>> F @>>> M @>>> 0 \\
@. @V{x}VV @VVV @| \\
0 @>>> \syz M @>>> N @>>> M @>>> 0 
\end{CD}
$$
where $F$ is free, $V(\p)\subseteq\NF(N)\subseteq\NF(M)$ and $D((x))\cap\NF(N)=\emptyset$.
The second row shows that $N$ belongs to $\res M$.
Since $\r$ is in $D((x))$, it is not in $\NF(N)$, and we have
$
V(\p)\subseteq\NF(N)\subsetneq\NF(M).
$

Now we claim that $\depth N_\q=\inf\{\depth M_\q,\depth R_\q\}$ for all $\q\in V(\p)$.
Indeed, localizing the above diagram at $\q$ and taking long exact sequences with respect to Ext, we get a commutative diagram with exact rows
$$
\begin{CD}
E^{i-1}(M_\q) @>>> E^i((\syz M)_\q) @>>> E^i(F_\q) @>>> E^i(M_\q) @>>> E^{i+1}((\syz M)_\q) \\
@| @V{x}V{(1)}V @VVV @| @V{x}V{(2)}V \\
E^{i-1}(M_\q) @>{(3)}>> E^i((\syz M)_\q) @>>> E^i(N_\q) @>>> E^i(M_\q) @>{(4)}>> E^{i+1}((\syz M)_\q)
\end{CD}
$$
for $i\in\ZZ$, where $E^i(-)=\Ext_{R_\q}^i(\kappa(\q),-)$.
As $x$ is an element of $\q$, the maps (1),(2) are zero maps, and so are (3),(4).
Thus we have a short exact sequence
$$
0 \to \Ext^i(\kappa(\q),(\syz M)_\q) \to \Ext^i(\kappa(\q),N_\q) \to \Ext^i(\kappa(\q),M_\q) \to 0
$$
for each integer $i$.
It is easy to see from this that the first equality in the following holds, while the second equality is obtained by applying Lemma \ref{a} to the exact sequence $0\to (\syz M)_\q\to F_\q\to M_\q\to 0$.
$$
\depth N_\q=\inf\{\depth (\syz M)_\q,\depth M_\q\}=\inf\{\depth R_\q,\depth M_\q\}.
$$
Thus the claim follows.

If $V(\p)=\NF(N)$, then we can take $X:=N$.
If $V(\p)$ is strictly contained in $\NF(N)$, then the above procedure gives rise to an $R$-module $L\in\res N$ (hence $L\in\res M$) with
$
V(\p)\subseteq\NF(L)\subsetneq\NF(N)\subsetneq\NF(M)
$
such that
$
\depth L_\q=\inf\{\depth N_\q,\depth R_\q\}=\inf\{\depth M_\q,\depth R_\q\}
$
for all $\q\in V(\p)$.
Since $\Spec R$ is a Noetherian space, iteration of this procedure must stop in finitely many steps, and we eventually obtain such a module $X$ as in the lemma.
\end{proof}

The next lemma will play a crucial role in the proofs of our theorems.
The main idea of the proof is similar to that of Lemma \ref{16}, but a much closer examination is necessary to be made.

\begin{lem}\label{b}
Let $S\to R$ be a homomorphism of rings.
Let $C,M$ be $R$-modules with $M\in[C]_n^R$, and let $N$ be an $S$-module of injective dimension $m<\infty$.
Then one has
\begin{align*}
\textstyle\bigcap_{i>0}\Ann_S\Ext_S^i(M,N) & =\textstyle\bigcap_{i=1}^m\Ann_S\Ext_S^i(M,N)\\
& \supseteq(\textstyle\prod_{i=1}^m\Ann_S\Ext_S^i(R,N)\cdot\Ann_S\Ext_S^i(C,N))^n.
\end{align*}
\end{lem}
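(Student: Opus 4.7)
The equality $\bigcap_{i>0}\Ann_S\Ext_S^i(M,N)=\bigcap_{i=1}^m\Ann_S\Ext_S^i(M,N)$ is immediate from the definition of injective dimension: since $\id_S N=m$, we have $\Ext_S^i(-,N)=0$ for $i>m$, so those terms contribute the unit ideal. The real content is the containment, which I will prove by induction on $n$, closely paralleling the proof of Lemma \ref{16}. Write $E^i(-):=\Ext_S^i(-,N)$, $I_R:=\prod_{i=1}^m\Ann_S E^i(R)$, and $I_C:=\prod_{i=1}^m\Ann_S E^i(C)$, so that the target ideal is $(I_R I_C)^n$.

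For the base case $n=1$, the module $M$ is a direct summand of $R^{\oplus p}\oplus\bigoplus_{j=0}^a(\syz^j C)^{\oplus b_j}$. The main subtle point is to bound $\Ann_S E^i(\syz^j C)$ by $I_R I_C$ for every $j\ge 0$ and every $i\in[1,m]$. Applying $\Hom_S(-,N)$ to the short exact sequences $0\to\syz^{j+1}C\to F_j\to\syz^j C\to 0$ coming from a minimal $R$-free resolution of $C$, and recalling that $F_j$ is $R$-free so $\Ann_S E^k(F_j)\supseteq\Ann_S E^k(R)$, a straightforward induction on $j$ yields
$$\Ann_S E^i(\syz^j C)\;\supseteq\;\prod_{k=i}^{i+j-1}\Ann_S E^k(R)\,\cdot\,\Ann_S E^{i+j}(C).$$
Factors whose index exceeds $m$ are the unit ideal, and each surviving factor contains the whole of $I_R$ or $I_C$ respectively (being itself one of the constituent factors of that product). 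Hence $\Ann_S E^i(\syz^j C)\supseteq I_R I_C$ uniformly. Passing to direct sums (annihilator of a direct sum is the intersection of annihilators, which contains their product) and then to the direct summand $M$ gives $\Ann_S E^i(M)\supseteq I_R I_C$ for every $i\in[1,m]$.

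For the inductive step with $n\ge 2$, Proposition \ref{1}(1) produces a short exact sequence $0\to X\to Y\to Z\to 0$ with $X\in[C]_{n-1}$, $Z\in[C]$, and $M$ a direct summand of $Y$. Applying $\Hom_S(-,N)$ gives a long exact sequence in which $E^i(Y)$ is sandwiched between a quotient of $E^i(Z)$ and a submodule of $E^i(X)$, so that
$$\Ann_S E^i(Y)\;\supseteq\;\Ann_S E^i(Z)\cdot\Ann_S E^i(X)\;\supseteq\;(I_R I_C)\cdot(I_R I_C)^{n-1}=(I_R I_C)^n$$
by the base case and the inductive hypothesis. The same bound transfers to the direct summand $M$, completing the induction.

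The only step that needs genuine care is the base-case arithmetic: correctly tracking how the factors $\Ann_S E^k(R)$ and $\Ann_S E^{i+j}(C)$ produced by iterating the long exact sequence for $\syz^j C$ assemble, once vanishing beyond index $m$ is invoked, into an ideal containing $I_R I_C$ uniformly in $j$. The rest is a mechanical replay of Lemma \ref{16} with $\Gamma_I$ replaced by $E^i=\Ext_S^i(-,N)$.
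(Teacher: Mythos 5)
Your proof is correct and follows essentially the same route as the paper's: induction on $n$, with the base case handled by telescoping annihilators along the syzygy exact sequences of $C$ (using that free modules contribute $\Ann_S\Ext_S^k(R,N)$) and the inductive step via Proposition \ref{1}(1) and the long exact sequence for $\Ext_S(-,N)$. The only cosmetic difference is that the paper carries the sharper index-dependent bound $\Ann_S\Ext_S^i(M,N)\supseteq(\prod_{j=i}^m\a_R^j\a_C^j)^n$ through the induction, whereas you collapse immediately to the full product $(I_RI_C)^n$; the correct justification for your ``hence'' in the base case is that the surviving factors form a partial product of $I_R$ (resp.\ $I_C$) over \emph{distinct} indices and a partial product of ideals contains the full product --- not merely that each individual factor contains $I_R$.
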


\begin{proof}
For each integer $i\le 1$ and  $R$-module $L$, set $\a_L^i=\Ann_S\Ext_S^i(L,N)$.
Note that $\a_L^h=S$ for all $h>m$ since $\Ext_S^h(L,N)=0$.
It suffices to prove that
$
\a_M^i\supseteq(\prod_{j=i}^m\a_R^j\a_C^j)^n.
$
Let us proceed by induction on $n$.

When $n=0$, we have $M=0$, and the above two ideals coincide with $S$.

Let $n=1$.
Then $M$ is isomorphic to a direct summand of a finite direct sum of copies of $R\oplus(\bigoplus_{j=0}^l\syz^jC)$.
Hence $\Ext_S^i(M,N)$ is isomorphic to a direct summand of a finite direct sum of copies of $\Ext_S^i(R,N)\oplus(\bigoplus_{j=0}^l\Ext_S^i(\syz^jC,N))$.
Thus we have
$
\a_M^i\supseteq\a_R^i\cap\left(\bigcap_{j=0}^l\a_{\syz^jC}^i\right).
$
For each $j\ge1$ there is an exact sequence $0 \to \syz^jC \to R^{\oplus k_j} \to \syz^{j-1}C \to 0$, which induces an exact sequence $\Ext_S^i(R,N)^{\oplus k_j} \to \Ext_S^i(\syz^jC,N) \to \Ext_S^{i+1}(\syz^{j-1}C,N)$.
This gives
$$
\a_{\syz^jC}^i
\supseteq \a_R^i\a_{\syz^{j-1}C}^{i+1}
\supseteq \a_R^i\a_R^{i+1}\a_{\syz^{j-2}C}^{i+2}
\supseteq \cdots
\supseteq \a_R^i\a_R^{i+1}\cdots\a_R^{i+j-1}\a_C^{i+j}.
$$
Regarding $\a_R^i\a_R^{i+1}\cdots\a_R^{i+j-1}\a_C^{i+j}$ as $\a_C^i$ when $j=0$, we have $\a_{\syz^jC}^i\supseteq\a_R^i\a_R^{i+1}\cdots\a_R^{i+j-1}\a_C^{i+j}$ for all $j\ge0$.
Thus we obtain:
\begin{align*}
\a_M^i
\supseteq \a_R^i\cap(\textstyle\bigcap_{j=0}^l\a_R^i\a_R^{i+1}\cdots\a_R^{i+j-1}\a_C^{i+j})
& \supseteq (\a_R^i\a_R^{i+1}\cdots\a_R^m\a_R^{m+1}\cdots)(\a_C^i\a_C^{i+1}\cdots\a_C^m\a_C^{m+1}\cdots) \\
& = (\a_R^i\a_R^{i+1}\cdots\a_R^m)(\a_C^i\a_C^{i+1}\cdots\a_C^m)
\supseteq \textstyle\prod_{j=i}^m\a_R^j\a_C^j.
\end{align*}

Now let us consider the case where $n\ge2$.
We have $M\in[C]_n=[C]_{n-1}\bullet[C]$, and by Proposition \ref{1}(1), there exists an exact sequence $0 \to X \to Y \to Z\to 0$ with $X\in[C]_{n-1}$ and $Z\in[C]$ such that $M$ is a direct summand of $Y$.
Using the induction hypothesis, we have
$$
\a_M^i
\supseteq \a_Y^i
\supseteq \a_X^i\cdot\a_Z^i
\supseteq (\textstyle\prod_{j=i}^m\a_R^j\a_C^j)^{n-1}\cdot(\textstyle\prod_{j=i}^m\a_R^j\a_C^j)
= (\textstyle\prod_{j=i}^m\a_R^j\a_C^j)^n,
$$
which completes the proof of the lemma.
\end{proof}

Here we prepare a lemma, which is an easy consequence of Krull's intersection theorem.

\begin{lem}\label{e}
Let $(R,\m)$ be a local ring and $M$ an $R$-module.
Then $\Ann_RM=\bigcap_{i>0}\Ann_R(M/\m^iM)$.
\end{lem}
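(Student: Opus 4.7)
The plan is to prove the two inclusions separately, with the nontrivial direction handled by a direct appeal to Krull's intersection theorem.

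For the inclusion $\Ann_R M \subseteq \bigcap_{i>0} \Ann_R(M/\m^i M)$, I would simply observe that if $r\in R$ kills $M$, then it kills every quotient $M/\m^i M$, so this direction is immediate and requires no hypothesis on $R$ or $M$.

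For the reverse inclusion, I would take $r \in \bigcap_{i>0}\Ann_R(M/\m^i M)$. By definition this means $rM \subseteq \m^i M$ for every $i \ge 1$, and hence
\[
rM \;\subseteq\; \bigcap_{i>0} \m^i M.
\]
Now $M$ is finitely generated over the Noetherian local ring $(R,\m)$, so Krull's intersection theorem yields $\bigcap_{i>0}\m^i M = 0$. Therefore $rM = 0$, i.e., $r \in \Ann_R M$, finishing the proof.

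There is no real obstacle here: the only substantive ingredient is Krull's intersection theorem, which applies verbatim since the standing convention in the paper is that $R$ is commutative Noetherian and all modules are finitely generated. This is why the authors flag the result as an easy consequence of Krull; the lemma is being recorded only because it will be combined with Lemma \ref{b} in the subsequent arguments (where taking annihilators of finite-length quotients makes $\Ext$-annihilator computations accessible).
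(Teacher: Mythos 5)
Your proof is correct and is exactly the argument the authors intend: the paper omits the proof, noting only that the lemma is an easy consequence of Krull's intersection theorem, which is precisely the one nontrivial ingredient you invoke (legitimately, since the standing convention makes $R$ Noetherian local and $M$ finitely generated).
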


Now we can prove the following proposition, which will be the base of the proofs of our theorems.
Actually, all of them will be proved by making use of this proposition.

\begin{prop}\label{14}
Let $(R,\m,k)$ be a local ring.
Let $\X$ be a resolving subcategory of $\mod R$.
If $\X$ contains a module $M$ such that $0< \pd_RM<\infty$, then one has $\radi \X =\infty$.
\end{prop}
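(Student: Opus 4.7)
Proof plan: I will argue by contradiction. Suppose $\radi \X = r < \infty$, so there exists an $R$-module $C$ with $\X \subseteq [C]_{r+1}$. Since $\X$ is resolving (hence closed under syzygies) and $M \in \X$ has $\pd_R M = p > 0$, the syzygy $\syz_R^{p-1}M \in \X$ has projective dimension $1$ and is nonfree; replacing $M$ by this syzygy, I may assume $\pd_R M = 1$. There is then a short exact sequence $0 \to F \to G \to M \to 0$ with $F, G$ finitely generated free modules and connecting map represented by a matrix with entries in $\m$.

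The central tool is Lemma \ref{b}: applied with a well-chosen ring map $S \to R$ and $S$-module $N$ of finite injective dimension $m$, it produces a fixed ideal
\[
J := \left(\prod_{i=1}^m \Ann_S \Ext_S^i(R, N) \cdot \Ann_S \Ext_S^i(C, N)\right)^{r+1}
\]
satisfying $J \subseteq \bigcap_{i=1}^m \Ann_S \Ext_S^i(X, N)$ for every $X \in \X$. Since a finitely generated $R$-module of finite injective dimension need not exist without a Cohen--Macaulay assumption, I would arrange this by passing through a completion and Cohen's structure theorem: $\widehat R \cong S/I$ for some complete regular local ring $S$, and then the module $N := S$ satisfies $\id_S N = \dim S < \infty$ since $S$ is Gorenstein. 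Specializing to $X = M$ and using the fact that $\pd_R M = 1$, the Ext groups $\Ext_S^i(M, N)$ become explicitly computable from the short free resolution of $M$, yielding concrete annihilator constraints.

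The final step combines the preceding bound with Lemma \ref{e}, which expresses annihilators as intersections $\bigcap_{j>0} \Ann_R(-/\m^j(-))$. Using the closure properties of $\res M \subseteq \X$ (extensions, syzygies, and direct summands), I would construct an explicit family of modules in $\res M$ whose Ext annihilators shrink, being contained in $\m^j$ for arbitrarily large $j$. This forces $J \subseteq \bigcap_j \m^j = 0$, contradicting the nonvanishing of $J$ arranged by the choice of $C$, $N$, and $S$.

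The main obstacle is the delicate dual nature of the construction: on one hand, $J$ must be arranged to be nonzero by a careful choice of the quasi-deformation $S \to R$ and of $N$ (ensuring that the relevant Ext modules $\Ext_S^i(R,N)$ and $\Ext_S^i(C,N)$ have proper nonzero annihilators); on the other hand, one must exhibit enough modules in $\res M$ — using the finite free resolution of $M$ in combination with the resolving operations — to drive the Ext annihilators into $\bigcap_j \m^j$. Balancing these two requirements, using faithful flatness to transfer information between $R$ and $S$, is the technical heart of the argument. Once established, this proposition serves as the base case to which parts \eqref{n}, \eqref{s}, and \eqref{y} of Theorem \ref{3} are reduced via quasi-deformations converting finite Gorenstein or CI-dimension into finite projective dimension.
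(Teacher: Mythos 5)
Your overall architecture matches the paper's (reduce to $\pd_RM=1$, pass to a Cohen presentation $S\twoheadrightarrow\widehat R$, apply Lemma \ref{b} with $N=S$, and contradict an annihilator bound), but the proposal leaves out precisely the step that carries the proof. A single module $M$ of projective dimension one yields only one fixed annihilator constraint; to force the ideal $J$ into $\bigcap_j\n^j$ (or into a contradiction of any kind) you need an \emph{infinite family} of modules in $\X$ whose Ext-annihilators over $S$ shrink, and you give no construction of such a family. The paper's construction is nontrivial: it first uses Lemma \ref{c} to replace $M$ by a module in $\res M$ that is free on the punctured spectrum and still has $\pd=1$, then repeatedly extends by socle elements of $\Ext_R^1(M,R)$ to reduce its length to one, concluding $\tr M\cong k$ and hence $\tr k\in\X$; an induction on length then puts $\tr(R/\m^i)$ in $\X$ for every $i$. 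It is the family $\tr_{\widehat R}(\widehat R/\n^i\widehat R)$, $i\gg0$, that is fed into Lemma \ref{b}, via the explicit injection $\Hom_S(\widehat R,S)/\n^i\Hom_S(\widehat R,S)\hookrightarrow\Ext_S^1(\tr_{\widehat R}(\widehat R/\n^i\widehat R),S)$. ``Construct an explicit family of modules in $\res M$ whose Ext annihilators shrink'' is a description of the goal, not an argument.

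Two further points would derail the plan as written. First, you take $S$ to be a complete \emph{regular} local ring; the paper instead takes $S$ Gorenstein with $\dim S=\dim R$. This matters: if $S$ is a domain properly surjecting onto $\widehat R$, then $\Hom_S(\widehat R,S)=(0:_SI)=0$, so the quotients $\Hom_S(\widehat R,S)/\n^i\Hom_S(\widehat R,S)$ that drive the shrinking-annihilator computation all vanish and give no information. The equality $\dim S=\dim\widehat R$ (so that $\height I=0$ and $\Hom_S(\widehat R,S)\ne0$) is essential. Second, the paper's contradiction is not ``$J\ne0$ versus $J\subseteq\bigcap_j\n^j=0$'': after Lemma \ref{e} one gets $J\subseteq\Ann_S\Hom_S(\widehat R,S)$, and the contradiction comes from localizing at a minimal prime $\p$ of $S$ containing $I$, where $\Hom_{S_\p}(\widehat R_\p,S_\p)$ is a nonzero injective hull (forcing $J\subseteq\p$) while $\Ext_{S_\p}^l(-,S_\p)=0$ for $l\ge1$ (forcing $J\not\subseteq\p$). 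Establishing $J\ne0$ directly is not available in the paper's setting since the Gorenstein ring $S$ need not be a domain. So as it stands the proposal has a genuine gap at its technical heart, and its stated endgame does not quite close.
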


\begin{proof}
Applying Lemma \ref{c} to $\m\in\NF(M)$, we find a module $X\in\res M\subseteq\X$ satisfying $\NF(X)=\{\m\}$ and $\depth X=\inf\{\depth M,\depth R\}$.
Since $M$ has finite projective dimension, the depth of $M$ is at most that of $R$.
Hence we have $\depth X=\depth M$.
Note that the subcategory of $\mod R$ consisting of $R$-modules of finite projective dimension is resolving.
Since it contains $M$, it also contains $\res M$.
This implies that $X$ has finite projective dimension, and we have $\pd_RX=\depth R-\depth X=\depth R-\depth M=\pd_RM$.
Thus, replacing $M$ with $X$, we may assume that $M$ is locally free on the punctured spectrum of $R$.
Taking the $n$-th syzygy of $M$ where $n=\pd_RM-1\ge0$, we may also assume that the projective dimension of $M$ is equal to $1$.

Now $\Ext_R^1(M,R)$ is a nonzero $R$-module of finite length, and we can choose a socle element $0\ne\sigma\in\Ext_R^1(M,R)$.
It can be represented as a short exact sequence:
$$
\sigma: 0 \to R \to N \to M \to 0.
$$
The module $N$ belongs to $\X$, is locally free on the punctured spectrum of $R$ and has projective dimension at most $1$.
Hence $\pd_RN=1$ if and only if $\Ext_R^1(N,R)\ne0$.
Applying the functor $\Hom_R(-,R)$, we get an exact sequence $R \xrightarrow{f} \Ext_R^1(M,R) \to \Ext_R^1(N,R) \to 0$, where $f$ sends $1\in R$ to $\sigma\in\Ext_R^1(M,R)$.
Hence we obtain an exact sequence
$$
0 \to k \to \Ext_R^1(M,R) \to \Ext_R^1(N,R) \to 0.
$$
This implies $\len(\Ext_R^1(N,R))=\len(\Ext_R^1(M,R))-1$.
Replacing $M$ by $N$ and repeating this process if $\len(\Ext_R^1(N,R))>0$, we can assume that $\Ext_R^1(N,R)=0$.
Therefore $\Ext_R^1(M,R)\cong k$.
Since $\pd_RM=1$, we easily get an isomorphism $\tr M\cong k$.
Taking the transpose of this isomorphism, we see that $\tr k$ is isomorphic to $M$ up to free summand (cf. \cite[Proposition (2.6)(d)]{AB}).
It follows that $\tr k$ belongs to $\X$.

We claim that $\tr L$ is in $\X$ for any $R$-module $L$ of finite length.
This is shown by induction on $\len L$.
If $\len L>0$, then there is an exact sequence $0 \to L' \to L \to k \to 0$, and applying \cite[Lemma (3.9)]{AB} (see also \cite[Proposition 3.3(3)]{crs}), we have an exact sequence
$$
0=(L')^\ast \to \tr k \to \tr L\oplus R^{\oplus n} \to \tr L' \to 0,
$$
where the equality follows from the fact that $R$ has positive depth.
(As $R$ possesses a module of finite positive projective dimension, the depth of $R$ is positive.)
The induction hypothesis implies $\tr L'\in\X$, and the above exact sequence shows $\tr L\in\X$, as desired.

Now, assume that we have $\radi\X=r<\infty$.
We want to deduce a contradiction.
There is a ball $[C]_{r+1}^R$ that contains $\X$.
Since $\X$ contains $\tr_R(R/\m^i)$ for all $i>0$, the ball $[C]_{r+1}^R$ also contains it.
Taking the completions, we have $\tr_{\widehat R}(\widehat R/\m^i\widehat R)\in[\widehat C]_{r+1}^{\widehat R}$ for all $i>0$.
By virtue of Cohen's structure theorem, there exists a surjective homomorphism $S\to\widehat R$ such that $S$ is a Gorenstein local ring with $\dim S=\dim R=:d$.
Let $\n$ denote the maximal ideal of $S$ and note that we have $\widehat R/\m^i\widehat R=\widehat R/\n^i\widehat R$ for any $i>0$.
Lemma \ref{b} gives an inclusion relation
$$
\textstyle\bigcap_{i>0}\Ann_S\Ext_S^i(\tr_{\widehat R}(\widehat R/\n^i\widehat R),S)\supseteq(\textstyle\prod_{i=1}^d\Ann_S\Ext_S^i(\widehat R,S)\cdot\Ann_S\Ext_S^i(\widehat C,S))^{r+1}.
$$
Fix an integer $i>0$ and let $a_1,\dots,a_m$ be a system of generators of the ideal $\n^i$ of $S$.
There is an exact sequence
$
{\widehat R}^{\oplus m} \xrightarrow{(a_1,\dots,a_m)} \widehat R \to \widehat R/\n^i\widehat R \to 0
$
of $\widehat R$-modules.
Dualizing this by $\widehat R$ induces an exact sequence
$$
0=\Hom_{\widehat R}(\widehat R/\n^i\widehat R,\widehat R) \to \widehat R \xrightarrow{\left(\begin{smallmatrix}
a_1\\
\vdots\\
a_m
\end{smallmatrix}\right)} {\widehat R}^{\oplus m} \to \tr_{\widehat R}(\widehat R/\n^i\widehat R) \to 0,
$$
where the equality follows since $\depth\widehat R=\depth R>0$.
This makes an exact sequence
$$
\Hom_S(\widehat R,S)^{\oplus m} \xrightarrow{(a_1,\dots,a_m)} \Hom_S(\widehat R,S) \to \Ext_S^1(\tr_{\widehat R}(\widehat R/\n^i\widehat R),S),
$$
which yields an injection
$
\Hom_S(\widehat R,S)/\n^i\Hom_S(\widehat R,S)\to\Ext_S^1(\tr_{\widehat R}(\widehat R/\n^i\widehat R),S).
$
Thus
\begin{align*}
& \Ann_S(\Hom_S(\widehat R,S)/\n^i\Hom_S(\widehat R,S)) \supseteq\Ann_S(\Ext_S^1(\tr_{\widehat R}(\widehat R/\n^i\widehat R),S))\\
& \supseteq\textstyle\bigcap_{i>0}\Ann_S\Ext_S^i(\tr_{\widehat R}(\widehat R/\n^i\widehat R),S)
\supseteq(\textstyle\prod_{i=1}^d\Ann_S\Ext_S^i(\widehat R,S)\cdot\Ann_S\Ext_S^i(\widehat C,S))^{r+1},
\end{align*}
and we obtain
\begin{align*}
& (\textstyle\prod_{i=1}^d\Ann_S\Ext_S^i(\widehat R,S)\cdot\Ann_S\Ext_S^i(\widehat C,S))^{r+1}\\
& \subseteq \textstyle\bigcap_{i>0}\Ann_S(\Hom_S(\widehat R,S)/\n^i\Hom_S(\widehat R,S))= \Ann_S\Hom_S(\widehat R,S),
\end{align*}
where the equality follows from Lemma \ref{e}.

Let $I$ be the kernel of the surjection $S\to\widehat R$.
Since $\dim S=d=\dim\widehat R$, the ideal $I$ of $S$ has height zero.
Hence there exists a minimal prime ideal $\p$ of $S$ which contains $I$.
Since we have a ring epimorphism from the Artinian Gorenstein local ring $S_\p$ to ${\widehat R}_\p$, the ${\widehat R}_\p$-module $\Hom_S(\widehat R,S)_\p=\Hom_{S_\p}({\widehat R}_\p,S_\p)$ is isomorphic to the injective hull of the residue field of ${\widehat R}_\p$, which is in particular nonzero.
This implies that $\p$ contains the ideal $\Ann_S\Hom_S(\widehat R,S)$.
Therefore, for some integer $1\le l\le d$ the ideal $\p$ contains either $\Ann_S\Ext_S^l(\widehat R,S)$ or $\Ann_S\Ext_S^l(\widehat C,S)$.
If $\p$ contains $\Ann_S\Ext_S^l(\widehat R,S)$, then we have $\Ext_{S_\p}^l({\widehat R}_\p,S_\p)\ne0$, which contradicts the fact that $S_\p$ is injective as an $S_\p$-module.
Similarly, we have a contradiction when $\p$ contains $\Ann_S\Ext_S^l(\widehat C,S)$.
This contradiction proves that $\radi\X=\infty$.
\end{proof}

Now we can show the essential part of Theorem \ref{3}(2)(3).

\begin{thm}\label{t23}
Let $R$ be a local ring with $\dim R>0$.
Let $\X$ be a resolving subcategory of $\mod R$.
One has $\radi\X=\infty$ if there exists a module $M\in\X$ with $0<\Gdim_RM<\infty$ that satisfies either of the following conditions.
\begin{enumerate}[\rm(1)]
\item
$\syz^{-2}\syz^gM\in\X$, where $g=\Gdim_RM$.
\item
$\res(\syz^nM)$ is a thick subcategory of $\TR(R)$ for some $n\ge0$.
\end{enumerate}
\end{thm}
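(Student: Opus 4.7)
The plan is to reduce both hypotheses to Proposition~\ref{14} by producing a module in $\X$ of finite positive projective dimension.

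To show that (2) implies (1), observe that $\res(\syz^n M) \subseteq \TR(R)$ forces $\syz^n M$ to be totally reflexive, so $n \ge g$. By Proposition~\ref{12}, $\res(\syz^n M)$ is closed under cosyzygies, and applying cosyzygy $n-g$ times to $\syz^n M$ returns $\syz^g M$ up to free summand; hence $\syz^g M \in \res(\syz^n M) \subseteq \X$, and two further cosyzygies produce $\syz^{-2}\syz^g M \in \X$.

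Assuming (1), I first replace $M$ with $\syz^{g-1} M$ to reduce to $g = 1$: the new module lies in $\X$, has G-dimension $1$, and has $\syz^g M$ as its first syzygy, so the hypothesis is preserved. Setting $N := \syz M$, the resolving property applied to $0 \to \syz^{-1} N \to F_{-2} \to \syz^{-2} N \to 0$ yields $\syz^{-1} N \in \X$. The core construction is then a two-step pushout. First, pushing out the inclusions $N \hookrightarrow F_0$ (from the resolution $0 \to N \to F_0 \to M \to 0$) and $N \hookrightarrow F_{-1}$ (from the coresolution of $N$) produces a totally reflexive $P \in \X$ fitting into $0 \to F_0 \to P \to \syz^{-1} N \to 0$ and $0 \to F_{-1} \to P \to M \to 0$. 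The first of these splits, since $\Ext_R^1(\syz^{-1} N, F_0) = 0$ by total reflexivity, giving $P \cong F_0 \oplus \syz^{-1} N$. Using the summandwise inclusion $P \hookrightarrow P' := F_0 \oplus F_{-2}$ (identity on $F_0$, and $\syz^{-1} N \hookrightarrow F_{-2}$ from the coresolution), a second pushout of $P \twoheadrightarrow M$ along $P \hookrightarrow P'$ yields $Q \in \X$ sitting simultaneously in $0 \to F_{-1} \to P' \to Q \to 0$ and $0 \to M \to Q \to \syz^{-2} N \to 0$. The first sequence has free outer terms, forcing $\pd_R Q \le 1$; the second places $Q$ in $\X$ as an extension.

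To finish, I will rule out $\pd_R Q = 0$: if $Q$ were free, the second sequence would realize $M$ as a syzygy of the totally reflexive module $\syz^{-2} N$, whence $M \cong \syz^{-1} N$ up to free summand, contradicting $\Gdim_R M = 1$. Thus $\pd_R Q = 1$, and Proposition~\ref{14} immediately yields $\radi\X = \infty$. The main delicacy will be keeping track of the pushout identifications and cokernels through both steps; in particular, verifying that the composite $F_{-1} \hookrightarrow P \hookrightarrow P'$ remains injective and that the cokernel of $P \hookrightarrow P'$ really equals $\syz^{-2} N$, both of which follow from the pushout universal property together with the explicit splitting of $P$.
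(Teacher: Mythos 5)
Your proposal is correct and follows essentially the same route as the paper: the same two-pushout construction (splitting the first pushout via $\Ext^1_R(\syz^{-1}N,F_0)=0$) producing a module of projective dimension exactly one in $\X$, followed by Proposition~\ref{14}; your nonfreeness argument and your reduction of (2) to (1) via Proposition~\ref{12} and cosyzygy-inverts-syzygy on totally reflexive modules are only cosmetic variants of the paper's steps.
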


\begin{proof}
According to Proposition \ref{14}, it suffices to show that $\X$ contains a module of projective dimension one.

(1) We consider a construction whose idea essentially comes from the Auslander-Buchweitz approximation theorem \cite[Theorem 1.1]{AB''}.
There are exact sequences $0\to\syz^gM\to R^{\oplus a}\to\syz^{g-1}M\to0$ and $0\to\syz^gM\to R^{\oplus b}\to\syz^{-1}\syz^gM\to0$, where the latter is possible as $\syz^gM$ is a totally reflexive module.
We make the following pushout diagram.
$$
\begin{CD}
@. 0 @. 0 @. \\
@. @VVV  @VVV @. \\
0 @>>> \syz^gM @>>> R^{\oplus a} @>>> \syz^{g-1}M @>>> 0 \\
@. @VVV @VVV @| \\
0 @>>> R^{\oplus b} @>>> N @>>> \syz^{g-1}M @>>> 0 \\
@. @VVV  @VVV @. \\
@. \syz^{-1}\syz^gM @= \syz^{-1}\syz^gM \\
@. @VVV  @VVV @. \\
 @. 0 @. 0\\
\end{CD}
$$
As $\syz^{-1}\syz^gM$ is totally reflexive, we have $\Ext_R^1(\syz^{-1}\syz^gM,R)=0$.
Hence the second column in the above diagram splits, and we get an exact sequence
$
0\to R^{\oplus b}\to R^{\oplus a}\oplus\syz^{-1}\syz^gM\to\syz^{g-1}M\to0.
$
There is an exact sequence $0\to\syz^{-1}\syz^gM\to R^{\oplus c}\to\syz^{-2}\syz^gM\to0$, and taking the direct sum with $0\to R^{\oplus a}\xrightarrow{=}R^{\oplus a}\to0\to0$, we have an exact sequence
$
0\to R^{\oplus a}\oplus\syz^{-1}\syz^gM\to R^{\oplus(a+c)}\to\syz^{-2}\syz^gM\to0.
$
Thus the following pushout diagram is obtained.
$$
\begin{CD}
@. @. 0 @. 0 \\
@. @. @VVV @VVV \\
0 @>>> R^{\oplus b} @>>> R^{\oplus a}\oplus\syz^{-1}\syz^gM @>>> \syz^{g-1}M @>>> 0 \\
@. @| @VVV @VVV \\
0 @>>>R^{\oplus b}@>>> R^{\oplus(a+c)} @>>> L @>>> 0 \\
@. @. @VVV @VVV \\
@. @. \syz^{-2}\syz^gM @= \syz^{-2}\syz^gM \\
@. @. @VVV @VVV \\
@. @. 0 @. 0\\
\end{CD}
$$
As $\syz^{g-1}M$ and $\syz^{-2}\syz^gM$ are in $\X$, the module $L$ is also in $\X$.
The second row shows that $L$ has projective dimension at most $1$.
Since $\syz^{-2}\syz^gM$ is totally reflexive but $\syz^{g-1}M$ is not, it follows from the second column that $L$ is nonfree.
Therefore the projective dimension of $L$ is equal to $1$.

(2) Since by assumption $\res\syz^nM$ is a subcategory of $\TR(R)$, the module $\syz^nM$ is totally reflexive.
Hence $n\ge g:=\Gdim_RM$.

We claim that $\res\syz^nM=\res\syz^gM$.
In fact, since $n-g\ge0$ and $\syz^nM=\syz^{n-g}(\syz^gM)$, we observe that $\res\syz^nM$ is contained in $\res\syz^gM$.
There is an exact sequence
$$
0 \to \syz^nM \to F_{n-1} \to F_{n-2} \to \cdots \to F_{g+1} \to F_g \to \syz^gM \to 0
$$
of totally reflexive $R$-modules with each $F_i$ being free.
Since $\res\syz^nM$ is assumed to be thick in $\TR(R)$, decomposing the above exact sequence into short exact sequences of totally reflexive modules, we see that $\syz^gM$ belongs to $\res\syz^nM$.
Therefore $\res\syz^gM$ is contained in $\res\syz^nM$.
Thus the claim follows.

Set $\X=\res\syz^nM=\res\syz^gM$.
Our assumption implies that $\X$ is closed under cosyzygies, whence $\syz^{-2}\syz^gM\in\X$.
By (1), we conclude that $\X$ contains a module of projective dimension $1$.
\end{proof}

Now, using Remark \ref{rrr} and Theorem \ref{t23}, we deduce Theorem \ref{3}(2)(3).

\subsection{Proof of Theorem \ref{3}(4)}\label{cmplxt}

We use the notion of a module of reducible complexity, which has been introduced by Bergh \cite{Be}.
Let us recall the definition.

\begin{dfn} 
The subcategory $\C^r_R$ of $\mod R$ is defined inductively as follows.
\begin{enumerate}[(1)]
\item
Every module of finite projective dimension belongs to $\C^r_R$.
\item
A module $M$ with $0<\cx_RM<\infty$ belongs to $\C^r_R$ if there exists a homogeneous element $\eta\in\Ext_R^*(M,M)$ with $|\eta|>0$ which is represented by a short exact sequence
$
\ses{M}{K}{\syz^{|\eta|-1} M}
$ 
with $K\in\C^r_R$, $\cx K<\cx M$ and $\depth K= \depth M$.
\end{enumerate}
An $R$-module is said to have {\em reducible complexity} if it is in $\C^r_R$.
\end{dfn} 

The result below is shown in \cite[Proposition 2.2(i)]{Be}, which is implicitly stated in \cite{AGP}.

\begin{prop}\label{bagp}
Let $R$ be a local ring.
Every $R$-module of finite CI-dimension has reducible complexity.
\end{prop}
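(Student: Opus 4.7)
The plan is to induct on $c := \cx_R M$, which is finite under the hypothesis $\CIdim_R M < \infty$. The base case $c = 0$ is immediate: then $\pd_R M < \infty$, so $M$ lies in $\C^r_R$ by clause (1) of the definition of reducible complexity. Assume now $c \geq 1$ and that the result holds for every module of finite CI-dimension and complexity strictly less than $c$.

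Fix a quasi-deformation $R \to R' \leftarrow S$ realising the finiteness of $\CIdim_R M$: the surjection $S \twoheadrightarrow R'$ has kernel generated by an $S$-regular sequence $t_1,\ldots,t_e$, and $\pd_S M' < \infty$ where $M' := M \otimes_R R'$. Faithful flatness preserves complexity, so $\cx_{R'} M' = c$. Following Avramov-Gasharov-Peeva, the Eisenbud cohomology operators $\chi_1,\ldots,\chi_e \in \Ext^2_{R'}(M',M')$ associated to the CI presentation $S \twoheadrightarrow R'$ govern the growth of minimal free resolutions over $R'$; a suitably chosen operator (a generic linear combination, after possibly enlarging the residue field by a further faithfully flat extension) yields a short exact sequence
$$0 \to M' \to K' \to \syz_{R'} M' \to 0$$
such that $\cx_{R'} K' = c-1$, $\pd_S K' < \infty$ (indeed $K'$ has finite projective dimension over the codimension-$(e-1)$ subintersection picked out by the change of basis), and $\depth_{R'} K' = \depth_{R'} M'$ by Lemma \ref{a}.

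The main obstacle is descending this extension from $R'$ back to $R$. Faithful flatness only furnishes the natural isomorphism $\Ext^2_R(M,M) \otimes_R R' \cong \Ext^2_{R'}(M',M')$, so the class of the exact sequence above lies a priori in the right-hand side rather than in $\Ext^2_R(M,M)$ itself. This is handled by a density argument: after enlarging $R'$ so that its residue field is infinite, the Zariski-open locus of complexity-reducing classes in $\Ext^2_{R'}(M',M')$ meets the image of the $R$-rational classes $\Ext^2_R(M,M)$, so one may take the class of the form $\eta \otimes 1$ for some $\eta \in \Ext^2_R(M,M)$. The extension $0 \to M \to K \to \syz_R M \to 0$ over $R$ represented by $\eta$ base-changes to the one above; faithful flatness then transfers the needed properties to $K$: $\cx_R K = c-1 < c$, $\depth_R K = \depth_R M$, and $\CIdim_R K < \infty$ (using the same quasi-deformation). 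The inductive hypothesis places $K$ in $\C^r_R$, and consequently $M \in \C^r_R$ as well, completing the induction.
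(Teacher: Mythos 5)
First, a remark on what you are comparing against: the paper does not prove this proposition at all; it quotes it from \cite[Proposition 2.2(i)]{Be}, where it is deduced from \cite[Proposition 7.2]{AGP}. So you are reconstructing that argument. Your overall strategy --- induction on $\cx_RM$, passage to a quasi-deformation $R\to R'\leftarrow S$, an Eisenbud operator over $R'$ that cuts the complexity, and descent of the resulting extension back to $R$ --- is exactly the right skeleton, and the routine parts (the identification $\syz_{R'}M'\cong(\syz_RM)\otimes_RR'$, the finiteness of $\pd_SK'$ from the short exact sequence, the depth count via Lemma \ref{a}, and the transfer of $\cx$ and $\CIdim$ along the faithfully flat map) are all correct.

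The gap is in the descent step, which is precisely the point that makes the statement nontrivial. Whether a class $\theta\in\Ext^2_{R'}(M',M')$ reduces complexity depends only on its image in $V:=\Ext^2_{R'}(M',M')\otimes_{R'}k'\cong\bigl(\Ext^2_R(M,M)\otimes_Rk\bigr)\otimes_kk'$, and the image of the ``$R$-rational'' classes $\eta\otimes1$ in $V$ is exactly the $k$-form $\Ext^2_R(M,M)\otimes_Rk$. A nonempty Zariski-open subset of $V$ is guaranteed to meet this $k$-form only when $k$, the residue field of $R$, is infinite; when $k$ is finite the $k$-rational points form a finite set which can lie entirely inside the complement. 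Enlarging $R'$, as you propose, only enlarges $k'$ and does nothing to this problem, while enlarging $R$ itself would destroy the descent, since reducible complexity is by definition a condition over $R$. This is precisely why the definition of reducible complexity permits $|\eta|=2q$ for arbitrary $q\ge1$ rather than only $2$: in \cite{AGP} the rationality obstruction is overcome by passing to cohomology operators of higher even degree (graded prime avoidance over an arbitrary, possibly finite, field produces homogeneous elements only in large degrees) combined with a faithfully flat descent of the finite generation of $\Ext^*_R(M,k)$ over the image of $\Ext^{\mathrm{ev}}_R(M,M)$ --- not by a density argument in degree two. As written, your proof is essentially complete when $k$ is infinite (modulo justifying that the complexity-reducing locus is in fact open in $V$), but it does not establish the proposition for a general local ring.
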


In a resolving subcategory, for any fixed integer $n\ge0$, existence of modules of CI-dimension $n$ is equivalent to existence of modules of projective dimension $n$.

\begin{lem}\label{main3}
Let $R$ be a local ring.
Let $\X$ be a resolving subcategory of $\mod R$.
Suppose that there is a module $M\in\X$ such that $\CIdim_RM<\infty$.
Then $\X$ contains a module $N$ with $\pd_RN=\CIdim_RM$.
\end{lem}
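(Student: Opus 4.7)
The strategy is to exploit reducible complexity: by Proposition \ref{bagp}, the hypothesis $\CIdim_R M<\infty$ places $M$ in $\C^r_R$, and the goal becomes to chase a reducing chain inside the resolving subcategory $\X$ until one lands on a module of finite projective dimension.

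More precisely, I will prove by induction on $c:=\cx_R L$ the following stronger statement: \emph{for every resolving subcategory $\Y$ of $\mod R$ and every $L\in\Y\cap\C^r_R$, there exists $N\in\Y$ with $\pd_R N=\depth R-\depth L$.} The base case $c=0$ is immediate: complexity zero forces $\pd_R L<\infty$, and the Auslander--Buchsbaum formula (recorded in the remark in Section \ref{prelim}) gives $\pd_R L=\depth R-\depth L$, so we may take $N:=L$. For the inductive step $c>0$, the definition of $\C^r_R$ produces a short exact sequence
\[
0 \to L \to K \to \syz^{|\eta|-1}L \to 0
\]
with $K\in\C^r_R$, $\cx_R K<c$, and $\depth K=\depth L$. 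Since $\Y$ is resolving, it is closed under syzygies, so $\syz^{|\eta|-1}L\in\Y$; closure under extensions then gives $K\in\Y$. The inductive hypothesis applied to $K\in\Y\cap\C^r_R$ yields $N\in\Y$ with $\pd_R N=\depth R-\depth K=\depth R-\depth L$, as desired.

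Applying this to $\Y:=\X$ and $L:=M$, and using that $\CIdim_R M<\infty$ implies $\CIdim_R M=\depth R-\depth_R M$ (again by the remark), we obtain $N\in\X$ with $\pd_R N=\CIdim_R M$, which is the conclusion of the lemma.

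There is no real obstacle here; the argument is a straightforward induction once Proposition \ref{bagp} is used to start $M$ inside $\C^r_R$. The one subtle point worth recording is that the inductive hypothesis should be phrased for arbitrary modules in $\Y\cap\C^r_R$, not only for modules of finite CI-dimension, so that one need not verify the (plausible but non-trivial) two-out-of-three property for finite CI-dimension on the intermediate modules $K$ appearing in the reducing chain.
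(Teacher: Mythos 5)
Your proof is correct and follows essentially the same route as the paper: invoke Proposition \ref{bagp} to place $M$ in $\C^r_R$, then iterate the reducing short exact sequences (which stay inside the resolving subcategory by closure under syzygies and extensions) until the complexity drops to zero, and finish with Auslander--Buchsbaum. Your choice to induct on modules of reducible complexity while tracking $\depth R-\depth L$ rather than $\CIdim$ is a sensible refinement -- the paper's write-up asserts $\CIdim K=\depth R-\depth K$ for the intermediate module without justifying that $\CIdim K<\infty$, a point your formulation sidesteps -- but the underlying argument is the same.
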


\begin{proof}
Since $M$ has finite CI-dimension, it has finite complexity.
It follows from Proposition \ref{bagp} that $M$ has reducible complexity.
If $\cx M=0$, then $\pd M<\infty$, and we can take $N:=M$.
Hence we may assume $\cx M>0$.
There exists an exact sequence
$
\ses{M}{K}{\syz_R^{|\eta|-1} M}
$ 
with $\cx K<\cx M$ and $\depth K= \depth M$, where $\eta$ is a homogeneous element of $\Ext_R^*(M,M)$.
We have $K\in\X$ and $\CIdim K=\depth R-\depth K=\depth R-\depth M=\CIdim M$.
Replacing $M$ with $K$ and iterating this procedure, we can eventually arrive at a module $N\in \X$ with $\CIdim N = \CIdim M$ and $\cx N=0$.
The module $N$ has finite projective dimension, and we have $\pd N=\CIdim N=\CIdim M$.
\end{proof}

Lemma \ref{main3} and Proposition \ref{14} immediately yield the following theorem.
This is not only the essential part of Theorem \ref{3}(4) but also a generalization of Proposition \ref{14}.

\begin{thm}\label{t4}
Let $R$ be a local ring with $\dim R>0$.
Let $\X$ be a resolving subcategory of $\mod R$.
Suppose that there exists a module $M\in\X$ with $0<\CIdim_RM<\infty$.
Then the radius of $\X$ is infinite.
\end{thm}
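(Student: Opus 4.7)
The plan is straightforward: combine the two preceding results (Lemma \ref{main3} and Proposition \ref{14}) and observe that the hypotheses of Proposition \ref{14} become available once we apply Lemma \ref{main3}.

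First I would invoke Lemma \ref{main3} with the given module $M \in \X$ satisfying $\CIdim_R M < \infty$. This produces an $R$-module $N \in \X$ with $\pd_R N = \CIdim_R M$. Since by hypothesis $\CIdim_R M > 0$, the resulting module $N$ has positive and finite projective dimension, so $0 < \pd_R N < \infty$.

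Next I would apply Proposition \ref{14} to the resolving subcategory $\X$ and this module $N$. The proposition then immediately yields $\radi \X = \infty$, which is the desired conclusion.

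There is no real obstacle here; the work has already been done in Lemma \ref{main3} (which uses the reducible-complexity machinery of Bergh together with Proposition \ref{bagp} to cut the complexity down to zero while preserving depth and membership in $\X$) and in Proposition \ref{14} (which is the technical heart, using the nonfree-locus reduction of Lemma \ref{c}, the socle-element trick to produce a module $M$ with $\tr M \cong k$, the annihilator estimate of Lemma \ref{b}, and a Cohen structure-theorem argument to derive a contradiction). Theorem \ref{t4} is simply the clean packaging of these two inputs.
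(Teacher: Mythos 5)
Your proposal matches the paper's own proof exactly: the paper states that Theorem \ref{t4} follows immediately from Lemma \ref{main3} (producing $N\in\X$ with $0<\pd_RN=\CIdim_RM<\infty$) combined with Proposition \ref{14}. The argument is correct and complete.
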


Theorem \ref{3}(4) now follows from Theorem \ref{t4} and Remark \ref{rrr}.

\subsection{Another proof of Theorem \ref{t4}}

In the next theorem, we study the thickness of resolving subcategories of modules of CI-dimension at most zero.
This will give another proof of Theorem \ref{t4}.

\begin{thm}\label{4}
Let $R$ be a local ring.
\begin{enumerate}[\rm(1)]
\item
Let $M$ be an $R$-module of CI-dimension at most zero.
Then $\syz^{-1}M$ belongs to $\res M$.
\item
Let $\X$ be a resolving subcategory of $\mod R$.
Suppose that every module in $\X$ has CI-dimension at most zero.
Then $\X$ is a thick subcategory of $\TR(R)$.
\end{enumerate}
\end{thm}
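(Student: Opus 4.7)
My plan is to prove part (1) directly and derive (2) as an immediate consequence via Proposition \ref{12}. For (1), I may assume $M\ne 0$, so $\CIdim_R M=0$, which (using $\Gdim=\CIdim$ when both are finite) forces $\Gdim_R M=0$; hence $M$ is totally reflexive and $\depth_R M=\depth R$. I then induct on $\cx_R M$. In the base case $\cx_R M=0$, such a totally reflexive module of finite projective dimension is free, so $\syz^{-1}M=0\in\res M$ trivially.

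For the inductive step, Proposition \ref{bagp} supplies reducible complexity for $M$, yielding a class $\eta\in\Ext_R^d(M,M)$ and an exact sequence
\[
0\to M\to K\to \syz^{d-1}M\to 0
\]
with $\cx_R K<\cx_R M$, $\depth_R K=\depth_R M$, and $K\in\res M$. Using the Eisenbud/AGP cohomology operators arising from the quasi-deformation that witnesses finite CI-dimension of $M$, I would arrange $d\ge 2$. Since $M$ and $\syz^{d-1}M$ are totally reflexive, so is $K$, and $K$ inherits finite CI-dimension from the same quasi-deformation; combined with $\depth_R K=\depth R$, this forces $\CIdim_R K=0$. The induction hypothesis applied to $K$ then places $\syz^{-1}K$ in $\res K\subseteq\res M$.

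To conclude (1), I extract a cosyzygy short exact sequence. Dualizing the displayed sequence by $R$ gives $0\to(\syz^{d-1}M)^*\to K^*\to M^*\to 0$ (exact by total reflexivity); applying the horseshoe lemma to free resolutions of $M^*$ and $(\syz^{d-1}M)^*$ produces $0\to \syz(\syz^{d-1}M)^*\to \syz K^*\oplus R^l\to \syz M^*\to 0$ for some $l$; dualizing once more and using $(\syz N^*)^*\cong\syz^{-1}N$ for totally reflexive $N$ yields
\[
0\to \syz^{-1}M\to \syz^{-1}K\oplus R^l\to \syz^{-1}(\syz^{d-1}M)\to 0.
\]
Because $d\ge 2$, the right-hand term equals $\syz^{d-2}M$ up to a free summand and belongs to $\res M$; the middle term belongs to $\res M$ by the inductive step; and $\res M$ is closed under kernels of epimorphisms, so $\syz^{-1}M\in\res M$. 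For (2), each $M\in\X$ has $\CIdim_R M\le 0$, hence is totally reflexive, so $\X\subseteq\TR(R)$; part (1) gives $\syz^{-1}M\in\res M\subseteq\X$, showing that $\X$ is closed under cosyzygies; Proposition \ref{12} then identifies $\X$ as a thick subcategory of $\TR(R)$.

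The principal obstacle is the guarantee $d\ge 2$: if $d=1$ the cosyzygy sequence collapses to $0\to\syz^{-1}M\to\syz^{-1}K\oplus R^l\to\syz^{-1}M\to 0$, which is tautological and does not let closure under kernels of epimorphisms place $\syz^{-1}M$ in $\res M$. Overcoming this requires the AGP machinery: Eisenbud's cohomology operators attached to a quasi-deformation realize degree-$2$ classes in $\Ext_R^2(M,M)$, each of which furnishes an admissible $\eta$ for Bergh's construction at the even degree we need.
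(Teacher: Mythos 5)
Your proof is essentially correct but follows a genuinely different route from the paper's for part (1). You induct on $\cx_RM$, perform a single complexity reduction $0\to M\to K\to\syz^{d-1}M\to0$, apply the inductive hypothesis to $K$, and then transport the sequence to cosyzygies by dualizing twice with a horseshoe argument; closure under kernels of epimorphisms finishes the step. The paper instead iterates the reductions $K_0=M,\,K_1,\dots,K_e$ all the way down to a module $K_e$ of complexity zero, observes that $\CIdim_RM=0$ forces $\depth K_e=\depth R$ so that $K_e$ is \emph{free}, and then shows by a telescoping pushout argument that the cokernel $C_{e-1}$ of the composite embedding $M\hookrightarrow K_e$ lies in $\res M$; since $C_{e-1}\cong\syz^{-1}M\oplus(\text{free})$, this gives the conclusion with no dualization and, crucially, with no constraint on the degrees $t_i$ of the reducing classes. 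Your treatment of part (2) and of the hypothesis $\CIdim_RK=0$ needed for the induction (finite CI-dimension of $K$ via the same quasi-deformation, plus $\depth K=\depth M=\depth R$) matches what is needed and is fine.

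The one load-bearing point you should tighten is the guarantee $d\ge2$, which you correctly identify as the obstacle: if $d=1$ your cosyzygy sequence is useless. It is true that the reduction for modules of finite CI-dimension can be taken in even (hence $\ge2$) cohomological degree, since it is manufactured from the degree-two Eisenbud operators of a quasi-deformation, but your justification overstates the mechanism: the operators a priori live in $\Ext^2_{R'}(M',M')$ and must be descended to $R$, and not \emph{every} degree-two class is admissible --- one needs a class acting eventually injectively on $\Ext^*(M,k)$, whose existence in low degree requires the prime-avoidance arguments of Avramov--Gasharov--Peeva (and possibly an enlargement of the residue field built into the quasi-deformation). So your argument is correct only after unpacking \cite[Section 7]{AGP} and the proof of \cite[Proposition 2.2]{Be}, facts the paper's statement of reducible complexity (which records only $t_i>0$) does not supply. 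The paper's telescoping argument is designed precisely to be insensitive to this issue, which is what it buys over your cleaner induction.
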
 

\begin{proof}
(1) Proposition \ref{bagp} implies that $M$ has reducible complexity.
Let $K_0=M$ and let $K_{i+1}$ be a reduction in complexity of $K_i$ for each $i\ge0$.
Then we have a short exact sequence
$
0 \to K_i \xrightarrow{f_i} K_{i+1} \to \syz^{t_i-1}K_i \to 0
$
with $t_i>0$ (see also \cite[Proposition 7.2]{AGP}), and eventually we must have $\cx_R K_e=0$ for some $e\ge0$.
Then $K_e$ has finite projective dimension.
As $\CIdim_RM=0$, we have $\depth K_e=\depth M=\depth R$.
Therefore $K_e$ is a free module.
Note that the above exact sequence also shows that
\begin{equation}\label{953}
K_i\in\res M\text{ for all }i\ge0.
\end{equation}
If $e=0$, then $M$ is free and we have $\syz^{-1}M=0\in\res M$.
So we may assume $e\ge1$.

We claim that for each $0\le i\le e-1$ the cokernel $C_i$ of the composite map $f_i\cdots f_1f_0: M\to K_{i+1}$ belongs to $\res M$.
Let us show this claim by induction on $i$.
When $i=0$, we have $C_i=\syz^{t_0-1}M\in\res M$.
Let $i\ge1$.
We have the following commutative diagram with exact rows and columns.
$$
\begin{CD}
@. @. 0 @. 0 \\
@. @. @VVV @VVV \\
0 @>>> M @>{f_{i-1}\cdots f_0}>> K_i @>>> C_{i-1} @>>> 0 \\
@. @| @V{f_i}VV @VVV \\
0 @>>> M @>>> K_{i+1} @>>> C_i @>>> 0 \\
@. @. @VVV @VVV \\
@. @. \syz^{t_i-1}K_i @= \syz^{t_i-1}K_i \\
@. @. @VVV @VVV \\
@. @. 0 @. 0
\end{CD}
$$
The induction hypothesis implies that $C_{i-1}$ belongs to $\res M$.
Since $\syz^{t_i-1}K_i$ is in $\res M$ by \eqref{953}, the right column shows that $C_i$ is also in $\res M$.
Thus the claim follows.

Now we have a short exact sequence
$
0 \to M \xrightarrow{f_{e-1}\cdots f_1f_0} K_e \to C_{e-1} \to 0,
$
where $K_e$ is free and $C_{e-1}$ is in $\res M$ by the claim.
Since $M$ is totally reflexive and $\TR(R)$ is a resolving subcategory of $\mod R$, all the modules in $\res M$ are totally reflexive.
Hence all modules appearing in the above exact sequence belong to $\TR(R)$.
It is easy to verify that there exists an isomorphism $C_{e-1}\cong\syz^{-1}M\oplus F$ with $F$ being free.
Consequently, $\syz^{-1}M$ belongs to $\res M$.

(2) By assumption, $\X$ is a subcategory of $\TR(R)$.
Thanks to Proposition \ref{12}, it is enough to show that $\X$ is closed under cosyzygies.
Let $M$ be an $R$-module in $\X$.
Then $M$ is of CI-dimension at most zero, and $\syz^{-1}M$ belongs to $\res M$ by (1).
Since $\X$ is resolving and contains $M$, it also contains $\res M$.
Thus $\syz^{-1}M$ is in $\X$.
\end{proof}

Theorem \ref{4}(2) and Proposition \ref{12} immediately implies:

\begin{cor}\label{1559}
Let $R$ be a local complete intersection.
The following two are the same.
\begin{itemize}
\item
A resolving subcategory of $\mod R$ contained in $\CM(R)$.
\item
A thick subcategory of $\CM(R)$ containing $R$.
\end{itemize}
\end{cor}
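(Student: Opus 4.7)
The plan is to deduce the equivalence by combining Theorem \ref{4}(2) with Proposition \ref{12}, after one preliminary observation. Since $R$ is a local complete intersection it is Gorenstein, so $\TR(R)$ coincides with $\CM(R)$; moreover every $R$-module has finite CI-dimension, and for a Cohen-Macaulay module $M$ the Auslander-Buchsbaum-type identity gives $\CIdim_R M = \depth R - \depth_R M = 0$ (or $-\infty$ if $M=0$). Hence every module of $\CM(R)$ has CI-dimension at most zero.

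For the first inclusion, I would let $\X$ be a resolving subcategory of $\mod R$ contained in $\CM(R)$. By the preliminary observation, every module in $\X$ has CI-dimension at most zero, so Theorem \ref{4}(2) applies directly and yields that $\X$ is a thick subcategory of $\TR(R)=\CM(R)$; that $R\in\X$ is part of the resolving axiom (R1'). For the reverse inclusion, let $\X$ be a thick subcategory of $\CM(R)=\TR(R)$ containing $R$. Then the ``thick implies resolving and cosyzygy-closed'' direction of Proposition \ref{12} immediately gives that $\X$ is a resolving subcategory of $\mod R$, and the containment $\X\subseteq\CM(R)$ is built into the hypothesis.

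The genuine content of the corollary is concentrated in Theorem \ref{4}(2), which itself rests on the reducible-complexity machinery of Proposition \ref{bagp}; so no serious obstacle remains in packaging the corollary. The only step that requires any thought is the Auslander-Buchsbaum-type reduction forcing the CI-dimension of every Cohen-Macaulay module over a complete intersection to be at most zero, which is precisely what licenses the application of Theorem \ref{4}(2) and closes the argument.
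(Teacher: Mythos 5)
Your proposal is correct and follows exactly the route the paper intends: the paper derives the corollary "immediately" from Theorem \ref{4}(2) and Proposition \ref{12}, and your preliminary observation (that over a complete intersection every maximal Cohen--Macaulay module has CI-dimension at most zero via the depth formula, and that $\TR(R)=\CM(R)$ in the Gorenstein case) is precisely the bridge the paper leaves implicit. Nothing further is needed.
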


Now let us give another proof of Theorem \ref{t4}.
Let $R$ be a local ring with $\dim R>0$, and let $M\in\X$ be an $R$-module with $0<\CIdim_RM<\infty$.
Then we have $c:=\CIdim_RM=\Gdim_RM$, and $\syz^cM$ has CI-dimension zero.
In particular, $\syz^cM$ is totally reflexive, and hence so is $\syz^{-1}\syz^cM$.
We have
$
0=\CIdim_R(\syz^cM)=\sup\{\CIdim_R(\syz^{-1}\syz^cM)-1,0\},
$
which especially says that $\syz^{-1}\syz^cM$ has finite CI-dimension.
Therefore $\CIdim_R(\syz^{-1}\syz^cM)=\Gdim_R(\syz^{-1}\syz^cM)=0$, and Theorem \ref{4}(1) yields $\syz^{-2}\syz^cM\in\res(\syz^cM)\subseteq\X$.
Now Theorem \ref{t23}(1) implies that the radius of $\X$ is infinite.

%%%%%%%%%%%%%%%%%%%%%%%%%%%%%%%%%%%%%%%%%%%%%%%%%%%%%%%%

\section{Proof of Theorem II}\label{section5}

In this section we prove the main Theorem II from Introduction. In fact, we can prove significantly more general statements (Theorems \ref{88} and \ref{13}). In order to state and prove such results we need to first introduce a couple of definitions related to the concept of radius.

\begin{dfn}
Let $\X,\Y$ be subcategories of $\mod R$.
We put $|\X|=\add\X$, and set $\X*\Y=||\X|\circ|\Y||$.
(The notation ``$\circ$'' was introduced in Definition \ref{keydef}.)
For an integer $r>0$, set
$$
|\X|_r=
\begin{cases}
|\X| & (r=1),\\
|\X|_{r-1}*\X & (r\ge2).
\end{cases}
$$
\end{dfn}

Let $\X,\Y,\Z$ be subcategories of $\mod R$.
We observe that an object $M\in\mod R$ is in $\X*\Y$ if and only if there is an exact sequence $0 \to X \to E \to Y \to 0$ with $X\in|\X|$ and $Y\in|\Y|$ such that $M$ is a direct summand of $E$.
Also, one has $(\X*\Y)*\Z=\X*(\Y*\Z)$ and $|\X|_a*|\X|_b=|\X|_{a+b}$ for all $a,b>0$.

\begin{dfn}
For a subcategory $\X$ of $\mod $ we define $\size\X$ (respectively, $\rank\X$) to be the infimum of integers $n\ge0$ such that $\X\subseteq|G|_{n+1}$ (respectively, $\X=|G|_{n+1}$) for some $G\in\mod R$.
\end{dfn}

It always holds that $\size\X\le\rank\X$.
Since $|\X|_n\subseteq[\X]_n$ for all $n>0$, one has $\dim\X\ge\radius\X\le\size\X$.
If $\X$ is resolving, then $\dim\X\le\rank\X$.

For an $R$-module $M$, we denote by $M^\oplus$ an object in $\add_RM$.

\begin{prop}\label{6}
Let $I$ be an ideal of $R$ and let $M$ be an $R/I$-module.
\begin{enumerate}[\rm(1)]
\item
There is an exact sequence $0 \to I^\oplus \to \syz_RM \to \syz_{R/I}M \to 0$.
\item
One has $\syz_R^nM\in|\syz_{R/I}^nM\oplus(\bigoplus_{i=0}^{n-1}\syz_R^iI)|_{n+1}$ for all $n\ge0$.
\end{enumerate}
\end{prop}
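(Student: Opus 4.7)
The plan is as follows. For part (1), I begin by observing that since $IM = 0$, the minimal number of $R$-generators of $M$ equals the minimal number of $R/I$-generators; call this common number $g$. Choosing a minimal generating system $m_1,\ldots,m_g$ produces a commutative diagram with exact rows
\[
\begin{CD}
0 @>>> \syz_R M @>>> R^{\oplus g} @>>> M @>>> 0 \\
@. @VVV @VVV @| \\
0 @>>> \syz_{R/I} M @>>> (R/I)^{\oplus g} @>>> M @>>> 0
\end{CD}
\]
of minimal free presentations, whose middle vertical arrow is the canonical surjection with kernel $I^{\oplus g}$. The snake lemma then reads off the desired short exact sequence $0 \to I^{\oplus g} \to \syz_R M \to \syz_{R/I} M \to 0$.

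For part (2), I proceed by induction on $n$. The case $n = 0$ is immediate since $M \in \add M = |M|_1$. For the inductive step, I iterate the horseshoe lemma $n-1$ times starting from the short exact sequence from (1). Using that a syzygy of a direct sum agrees with the direct sum of syzygies up to free summands, and that a non-minimal free cover differs from the minimal one by a free summand, each iteration produces, in the end, an exact sequence
\[
0 \to (\syz_R^{n-1} I)^{\oplus g} \to \syz_R^n M \oplus R^{\oplus s} \to \syz_R^{n-1}\syz_{R/I} M \to 0
\]
for some $s\ge 0$. Now applying the inductive hypothesis to the $R/I$-module $\syz_{R/I} M$ (with exponent $n-1$) yields $\syz_R^{n-1}\syz_{R/I} M \in \bigl|\syz_{R/I}^n M \oplus \bigoplus_{i=0}^{n-2}\syz_R^i I\bigr|_n$.

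Setting $G := \syz_{R/I}^n M \oplus \bigoplus_{i=0}^{n-1}\syz_R^i I$, the left and right terms of the displayed sequence lie in $|G|_1$ and $|G|_n$ respectively, by monotonicity of $|-|_r$ under inclusion as a direct summand of the defining module. The identity $|G|_1 * |G|_n = |G|_{n+1}$ (stated just before Proposition \ref{6}) then places the middle term $\syz_R^n M \oplus R^{\oplus s}$ into $|G|_{n+1}$, and closure of $|G|_{n+1}$ under direct summands gives $\syz_R^n M \in |G|_{n+1}$, as required. The main technical nuisance is bookkeeping the free summands spawned by the repeated horseshoes, but these are benign: on the left they can be absorbed since $\syz_R R = 0$, and on the middle they are collected into a single $R^{\oplus s}$ and stripped at the end by the direct-summand-closure.
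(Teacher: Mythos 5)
Your proof is correct and takes essentially the same route as the paper: part (1) is the kernel sequence obtained by factoring a free cover $F\to M$ through $F/IF$, and part (2) is the same induction on $n$, applying $\syz_R^{n-1}$ to the sequence from (1) and invoking the inductive hypothesis for the $R/I$-module $\syz_{R/I}M$. Your explicit bookkeeping of the free summands produced by the iterated horseshoe lemma only makes precise a step the paper leaves implicit.
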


\begin{proof}
(1) Take a surjection from a free $R$-module $F$ to $M$.
Then this factors through a surjection $F/IF\to M$.
The assertion follows from this.

(2) We induce on $n$.
Let $n>0$.
The induction hypothesis shows $\syz_R^{n-1}\syz_{R/I}M\in|\syz_{R/I}^nM\oplus(\bigoplus_{i=0}^{n-2}\syz_R^iI)|_n$.
By (1) we have an exact sequence $0 \to \syz_R^{n-1}I^\oplus \to \syz_R^nM \to \syz_R^{n-1}\syz_{R/I}M \to 0$.
Now the assertion follows.
\end{proof}

For $n\ge0$ we denote by $\syz_R^n(\mod R)$ the subcategory of $\mod R$ consisting of $n$-th syzygies of $R$-modules.
For an ideal of $R$, let $\syz_R^n(\mod R/I)$ be the subcategory of $\mod R$ consisting of $n$-th syzygies of $R$-modules annihilated by $I$.

\begin{cor}
Let $d=\dim R<\infty$.
Suppose that $R/\p$ is regular for all $\p\in\Min R$.
Then $\size\syz^d(\mod R)<\infty$.
\end{cor}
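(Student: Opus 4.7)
The plan is to reduce an arbitrary $R$-module $M$ to an iterated extension, of length depending only on $R$, of $R/\p$-modules for $\p\in\Min R$, and then to bound the $d$-th syzygy of each such factor using Proposition \ref{6}(2) together with the regularity hypothesis.

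First I would produce a uniform filtration. Since $R$ is Noetherian with $\dim R<\infty$, the nilradical $N:=\nil(R)$ satisfies $N^k=0$ for some integer $k\ge 1$ depending only on $R$, and the chain $M\supseteq NM\supseteq\cdots\supseteq N^kM=0$ has $R/N$-modules as successive quotients. Writing $\Min R=\{\p_1,\dots,\p_s\}$, since $\prod_j\p_j\subseteq\bigcap_j\p_j=N$, any $R/N$-module $L$ satisfies $\p_1\cdots\p_sL=0$, so it admits the refinement $L\supseteq\p_1L\supseteq\p_1\p_2L\supseteq\cdots\supseteq 0$ whose factors are $R/\p_j$-modules. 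Concatenating yields, for every $M\in\mod R$, a filtration of length at most $ks$ whose factors are $R/\p$-modules for $\p\in\Min R$.

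Next I would bound a single factor. Let $F$ be an $R/\p$-module with $\p\in\Min R$. Since $R/\p$ is regular with $\dim R/\p\le d$, its global dimension is at most $d$, so $\syz_{R/\p}^dF$ is a projective $R/\p$-module and hence lies in $\add_{R/\p}(R/\p)$. Proposition \ref{6}(2) then gives
$$
\syz_R^dF\in\Big|\syz_{R/\p}^dF\oplus\bigoplus_{i=0}^{d-1}\syz_R^i\p\Big|_{d+1}\subseteq\Big|R/\p\oplus\bigoplus_{i=0}^{d-1}\syz_R^i\p\Big|_{d+1}.
$$
Now set
$$
G:=\bigoplus_{\p\in\Min R}\Big(R/\p\oplus\bigoplus_{i=0}^{d-1}\syz_R^i\p\Big),
$$
so $\syz_R^dF\in|G|_{d+1}$ for every factor $F$ in our filtration.

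Finally I would iterate the horseshoe lemma up the filtration. A short exact sequence $0\to A\to B\to C\to 0$ produces an exact sequence $0\to\syz_R^dA\to\syz_R^dB\oplus P\to\syz_R^dC\to 0$ with $P$ free, so $\syz_R^dB$ is a summand of the middle term and therefore lies in $\{\syz_R^dA\}*\{\syz_R^dC\}$. Combining this with the identity $|G|_a*|G|_b=|G|_{a+b}$ and inducting along the filtration, one gets $\syz_R^dM\in|G|_{ks(d+1)}$ for every $M\in\mod R$, so $\size\syz^d(\mod R)\le ks(d+1)-1<\infty$. The main technical obstacle is the bookkeeping with $|\cdot|_r$ during the horseshoe step, specifically ensuring that the free summand $P$ does not push $\syz_R^dB$ out of $|G|_{a+b}$; this comes down to the closure of $|G|_r$ under direct summands. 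The conceptual crux is that the filtration length $ks$ is intrinsic to $R$ and independent of $M$, which is what upgrades the pointwise bound per factor into a uniform bound on $\size$ rather than merely on $\dim$ or $\radius$.
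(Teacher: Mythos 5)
Your proof is correct and follows the paper's argument in all essentials: a uniform-length filtration of $M$ whose factors are modules over the regular rings $R/\p$ with $\p\in\Min R$, Proposition \ref{6}(2) plus regularity to place each factor's $d$-th syzygy in a fixed ball $|G|_{d+1}$, and a horseshoe induction up the filtration using closure of $|G|_r$ under direct summands. The only cosmetic difference is the source of the filtration: the paper uses a prime filtration $R=I_0\supsetneq\cdots\supsetneq I_n=0$ of $R$ and then passes to a minimal prime $\q_i\subseteq\p_i$ for each factor, whereas you use powers of the nilradical refined by products of the minimal primes; both give a length bound depending only on $R$, which is the point.
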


\begin{proof}
There is a filtration $R=I_0\supsetneq I_1\supsetneq\cdots\supsetneq I_n=0$ of ideals of $R$ such that for each $i$ one has $I_i/I_{i+1}\cong R/\p_i$ with $\p_i\in\spec R$.
Choose a minimal prime $\q_i$ contained in $\p_i$.
Let $M$ be an $R$-module.
Setting $M_i=I_iM/I_{i+1}M$, we have an exact sequence $0\to\syz_R^d(I_{i+1}M)\to\syz_R^d(I_iM)\oplus R^\oplus\to\syz_R^dM_i\to0$.
Note that each $d$-th syzygy $R/\q_i$-module is free.
Hence $\syz_R^dM_i\in|R/\q_i\oplus L_i|_{d+1}$ by Proposition \ref{6}(2), where $L_i:=\bigoplus_{j=0}^{d-1}\syz_R^j\q_i$.
Thus $\syz_R^dM\in|\bigoplus_{i=1}^n(R/\q_i\oplus L_i)|_{n(d+1)}$, which implies $\size\syz^d(\mod R)<n(d+1)<\infty$.
\end{proof}

\begin{cor}\label{7}
Let $I$ be an ideal of $R$ and $n\ge0$ an integer.
Then one has $\size\syz_R^n(\mod R/I)<(n+1)(\size\syz_{R/I}^n(\mod R/I)+1)$.
In particular, if $\size\syz_{R/I}^n(\mod R/I)$ is finite, then so is $\size\syz_R^n(\mod R/I)$.
\end{cor}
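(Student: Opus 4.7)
The plan is to apply Proposition~\ref{6}(2) and promote a generator on the $R/I$-side to a generator on the $R$-side. Set $s := \size\syz_{R/I}^n(\mod R/I)$; the inequality is vacuous when $s = \infty$, so I would assume $s < \infty$ and choose an $R/I$-module $G$ such that $\syz_{R/I}^n(\mod R/I) \subseteq |G|_{s+1}$, the ball being formed inside $\mod R/I$. Then I would set $H = \bigoplus_{i=0}^{n-1}\syz_R^i I$ and propose $G' = G \oplus H$, viewed as an $R$-module via restriction of scalars along the surjection $R \to R/I$, as the generator that realizes the claimed size bound over $R$.

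The core of the argument rests on two monotonicity properties of $|{-}|_r$. First, since any short exact sequence of $R/I$-modules is also a short exact sequence of $R$-modules, and a direct summand over $R/I$ remains a direct summand over $R$, a short induction on radius shows both the inclusion $|G|_r^{R/I} \subseteq |G|_r^R$ and the closure of $|G'|_r^R$ under finite direct sums (by stacking the defining short exact sequences). Combined with $H \in \add_R G' \subseteq |G'|_{s+1}^R$, these yield $\syz_{R/I}^n M \oplus H \in |G'|_{s+1}^R$ for every $R/I$-module $M$. Second, associativity of $*$ together with the identity $|G'|_a^R * |G'|_b^R \subseteq |G'|_{a+b}^R$ gives, again by induction on $r$, the inclusion $|\Y|_r^R \subseteq |G'|_{r(s+1)}^R$ whenever $\Y \subseteq |G'|_{s+1}^R$.

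Combining these two facts with Proposition~\ref{6}(2), which supplies $\syz_R^n M \in |\syz_{R/I}^n M \oplus H|_{n+1}^R$, yields $\syz_R^n M \in |G'|_{(n+1)(s+1)}^R$ for every $R/I$-module $M$. Therefore $\size\syz_R^n(\mod R/I) \le (n+1)(s+1) - 1 < (n+1)\bigl(\size\syz_{R/I}^n(\mod R/I) + 1\bigr)$, from which the ``in particular'' clause follows at once. I expect no substantive obstacle; the only care needed is to distinguish balls formed inside $\mod R/I$ from those formed inside $\mod R$. Since restriction of scalars preserves both short exactness and additive direct summands, all the relevant ball comparisons are essentially formal.
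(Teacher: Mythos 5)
Your argument is correct and is exactly the intended one: the paper's proof of Corollary~\ref{7} is the single line ``This is a consequence of Proposition~\ref{6}(2),'' and your write-up just supplies the routine ball-comparison details (restriction of scalars preserving exactness and summands, closure of $|{-}|_r$ under finite direct sums, and $|\Y|_r^R\subseteq|G'|_{r(s+1)}^R$ via $|G'|_a^R*|G'|_b^R=|G'|_{a+b}^R$) that the authors leave implicit. No gap; same approach.
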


\begin{proof}
This is a consequence of Proposition \ref{6}(2).
\end{proof}

\begin{lem}\label{5}
Let $M$ be an $R$-module.
Let $x\in R$ be $R$-regular.
Then $\syz_{R/xR}^n(\syz_RM/x\syz_RM)\cong\syz_R^{n+1}M/x\syz_R^{n+1}M$ for any $n\ge0$.
\end{lem}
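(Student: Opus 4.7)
Take a minimal free resolution $\cdots \to F_2 \xrightarrow{d_2} F_1 \xrightarrow{d_1} F_0 \to M \to 0$ of $M$ over $R$. Truncating, the sequence $\cdots \to F_2 \to F_1 \to \syz_R M \to 0$ is a minimal free resolution of $\syz_R M$ over $R$, whose $n$-th syzygy over $R$ is exactly $\syz_R^{n+1} M$. The strategy is to pass this resolution through $-\otimes_R R/xR$ and verify that the result is a minimal free resolution of $\syz_R M/x\syz_R M$ over $R/xR$; reading off the $n$-th syzygy then yields the claim.

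The key observation is that for every $k\ge 1$, $\syz_R^k M$ is a submodule of the free module $F_{k-1}$, hence is $x$-torsion free, since $x$ is $R$-regular. Resolving $R/xR$ by $0\to R\xrightarrow{x}R\to R/xR\to 0$, this gives $\Tor_1^R(\syz_R^k M,R/xR)=0$ for every $k\ge 1$. Applying $-\otimes_R R/xR$ to each short exact sequence $0\to \syz_R^{k+1}M\to F_k\to \syz_R^k M\to 0$ therefore produces a short exact sequence
$$
0 \to \syz_R^{k+1} M/x\syz_R^{k+1} M \to F_k/xF_k \to \syz_R^k M/x\syz_R^k M \to 0
$$
of $R/xR$-modules, and splicing for $k=1,\dots,n$ yields an exact sequence
$$
0 \to \syz_R^{n+1} M/x\syz_R^{n+1} M \to F_n/xF_n \to \cdots \to F_1/xF_1 \to \syz_R M/x\syz_R M \to 0
$$
in which each $F_i/xF_i$ is $R/xR$-free.

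The last step is to observe that this is a \emph{minimal} free $R/xR$-resolution of $\syz_R M/x\syz_R M$. Since $R$ is local (implicit from the use of minimal resolutions in Definition \ref{stc}) and $x$ lies in the maximal ideal $\m$, the original minimality condition $d_i(F_i)\subseteq \m F_{i-1}$ descends to the induced differentials $F_i/xF_i\to F_{i-1}/xF_{i-1}$, whose images land in $(\m/xR)\cdot(F_{i-1}/xF_{i-1})$. Hence the $n$-th syzygy $\syz_{R/xR}^n(\syz_R M/x\syz_R M)$ is precisely $\syz_R^{n+1}M/x\syz_R^{n+1}M$, as desired. I do not foresee any serious obstacle: both the Tor vanishing and the preservation of minimality are immediate once the correct short exact sequences are written down, so the entire argument amounts to carefully chaining them together.
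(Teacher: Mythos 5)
Your proof is correct and takes essentially the same approach as the paper: the paper argues by induction on $n$, reducing each short exact sequence $0\to\syz_R^{k+1}M\to P\to\syz_R^kM\to0$ modulo $x$ using that $x$ is regular on syzygies (submodules of free modules), which is exactly your splicing argument packaged inductively. Your explicit check that minimality descends to $R/xR$ is a worthwhile detail the paper leaves implicit.
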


\begin{proof}
We use induction on $n$.
Let $n>0$.
We have $\syz_{R/xR}^{n-1}(\syz M/x\syz M)\cong\syz^nM/x\syz^nM$ by the induction hypothesis, and hence $\syz_{R/xR}^n(\syz M/x\syz M)\cong\syz_{R/xR}(\syz^nM/x\syz^nM)$.
Note that $x$ is $\syz_R^nM$-regular.
There is an exact sequence $0 \to \syz^{n+1}M \to P \to \syz^nM \to 0$ of $R$-modules with $P$ projective, which gives an exact sequence $0 \to \syz^{n+1}M/x\syz^{n+1}M \to P/xP \to \syz^nM/x\syz^nM \to 0$.
Hence $\syz^{n+1}M/x\syz^{n+1}M\cong\syz_{R/xR}(\syz^nM/x\syz^nM)$.
\end{proof}

\begin{thm}\label{88}
Let $(R,\m)$ be a $d$-dimensional complete local ring with perfect coefficient field.
Then one has $\size\syz^d(\mod R)<\infty$.
Hence $\radius\syz^d(\mod R)<\infty$.
\end{thm}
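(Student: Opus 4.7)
The plan is to prove the theorem by induction on $d=\dim R$, using Proposition~\ref{6}, Corollary~\ref{7}, and Lemma~\ref{5} as the principal tools. In the base case $d=0$ the ring $R$ is Artinian with $\ell := \LL(R) < \infty$, and every $M \in \mod R$ carries the filtration $0 = \m^\ell M \subseteq \m^{\ell-1} M \subseteq \cdots \subseteq M$ by $R/\m$-vector spaces, so $\mod R \subseteq |R/\m|_\ell$ and $\size \syz^0(\mod R) < \infty$.

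For the inductive step, assuming the theorem in dimensions $<d$, the first move is to reduce to the case of a complete local \emph{domain} of dimension $d$. I would fix a prime filtration $R = I_0 \supsetneq I_1 \supsetneq \cdots \supsetneq I_n = 0$ with $I_i/I_{i+1} \cong R/\p_i$; every $M \in \mod R$ then inherits a filtration of uniform length $n$ whose factors $M_i = I_i M/I_{i+1}M$ are $R/\p_i$-modules. Iterating the short exact sequences $0 \to \syz^d(I_{i+1}M) \to \syz^d(I_iM) \oplus R^\oplus \to \syz^d M_i \to 0$ (as in the proof of the unnumbered corollary just before Corollary~\ref{7}) puts $\syz_R^d M$ in $|\bigoplus_i \syz_R^d M_i \oplus R|_n$. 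By Corollary~\ref{7} it then suffices to bound $\size \syz_{R/\p}^d(\mod R/\p)$ uniformly, and this follows directly from the induction hypothesis (together with the containment $\syz^d \subseteq \syz^{\dim R/\p}$ for $d \ge \dim R/\p$) for every $\p$ with $\dim R/\p < d$. The only outstanding case is that of a complete local domain $D$ of dimension $d$ with perfect coefficient field.

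For the domain case---which I anticipate will be the main technical obstacle---the strategy is to exploit the generic regularity afforded by excellence: since $D_{(0)}$ is a field, $\Sing D$ is a proper closed subset $V(J) \subsetneq \Spec D$ with $J \ne 0$; I pick any $0 \ne x \in J$, so that $x$ is $D$-regular, $D_x$ is regular, and $D/xD$ is complete local of dimension $d-1$ with perfect coefficient field. Lemma~\ref{5} provides the bridge
\begin{equation*}
\syz_D^d M / x \syz_D^d M \;\cong\; \syz_{D/xD}^{d-1}\bigl(\syz_D M / x \syz_D M\bigr),
\end{equation*}
so by the inductive hypothesis applied to $D/xD$, the mod-$x$ reductions of $d$-th $D$-syzygies lie in a ball of uniformly bounded size over $D/xD$, and hence over $D$. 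The delicate step is to lift this bound from $\syz_D^d M / x \syz_D^d M$ back to $\syz_D^d M$ itself, uniformly in $M$; my plan would combine the $x$-torsion-freeness of $\syz_D^d M$ (since $x$ is $D$-regular and $\syz_D^d M$ embeds in a free module), the projectivity of $(\syz_D^d M)_x$ over the regular ring $D_x$, and an Artin--Rees-type argument on the $x$-adic filtration. Once $\size \syz^d(\mod R) < \infty$ is established, the concluding inequality $\radius \syz^d(\mod R) < \infty$ is immediate from the general inequality $\radius \le \size$ noted right after the definitions at the beginning of this section.
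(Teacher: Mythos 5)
Your overall architecture --- induction on $d$, reduction to the domain case via a prime filtration together with Corollary~\ref{7}, and the use of Lemma~\ref{5} to identify $\syz_D^dM/x\syz_D^dM$ with a $(d-1)$-st syzygy over $D/xD$ --- matches the paper's proof exactly up to the point you yourself flag as ``the delicate step,'' and there the proposal has a genuine gap. Knowing that $N/xN$ lies in a ball $|G|_n^{D/xD}$ of uniformly bounded size does not by itself bound $N=\syz_D^dM$: the ingredients you list ($x$-torsion-freeness of $N$, freeness of $N_x$ over the regular ring $D_x$, and an Artin--Rees-type argument on the $x$-adic filtration) provide no mechanism for realizing $N$ itself through finitely many extensions of syzygies of a single fixed module, and I do not see how the argument would close along those lines.

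The paper's resolution differs in two essential respects. First, the element $x$ is not an arbitrary nonzero element of an ideal cutting out $\Sing D$; it is taken from the ideal $J$ supplied by Wang's theorem \cite[5.15]{W}, which satisfies the much stronger \emph{uniform} annihilation property $J\cdot\Ext_D^{d+1}(\mod D,\mod D)=0$. Second, the lift from $N/xN$ back to $N$ is achieved by the splitting lemma \cite[Lemma 2.1]{stcm}: if $x$ is $N$-regular and annihilates $\Ext_D^1(N,\syz_DN)$ --- which holds here precisely because $\Ext_D^1(\syz_D^dM,\syz_D^{d+1}M)\cong\Ext_D^{d+1}(M,\syz_D^{d+1}M)$ is killed by $J$ --- then $N$ is a direct summand of $\syz_D(N/xN)$. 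Since $N/xN\in|G|_n^D$ by Lemma~\ref{5} and the induction hypothesis, one gets $\syz_D(N/xN)\in|\syz_DG|_n^D$ and hence $N\in|\syz_DG|_n^D$, uniformly in $M$. Without Wang's uniform annihilator and this direct-summand lemma, the domain case does not go through.
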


\begin{proof}
We use induction on $d$.
When $d=0$, we have $\mod R=|k|_{\ell\ell(R)}$, hence $\size(\mod R)<\ell\ell(R)<\infty$.
Let $d\ge1$.
Take a filtration $R=I_0\supsetneq\cdots\supsetneq I_n=0$ of ideals such that for each $i$ one has $I_i/I_{i+1}\cong R/\p_i$ with $\p_i\in\spec R$.
Suppose that $\size\syz_{R/\p_i}^{d_i}(\mod R/\p_i)<\infty$ for all $i$, where $d_i=\dim R/\p_i$.
Then we have $\size\syz_{R/\p_i}^d(\mod R/\p_i)<\infty$, since $\syz_{R/\p_i}^d(\mod R/\p_i)$ is contained in $\syz_{R/\p_i}^{d_i}(\mod R/\p_i)$.
Corollary \ref{7} implies $\size\syz_R^d(\mod R/\p_i)<\infty$.
For each $R$-module $M$ there is an exact sequence $0 \to I_{i+1}M \to I_iM \to I_iM/I_{i+1}M \to 0$, which gives an exact sequence $0 \to \syz_R^d(I_{i+1}M) \to \syz_R^d(I_iM)\oplus R^\oplus \to \syz_R^d(I_iM/I_{i+1}M) \to 0$.
As $\syz_R^d(I_iM/I_{i+1}M)$ is in $\syz_R^d(\mod R/\p_i)$, we have $\size\syz_R^d(\mod R)<\infty$.
Thus, we may assume $R$ is a domain.

If $R$ is regular, then $\syz^d(\mod R)=|R|_1$ and $\size\syz^d(\mod R)=0<\infty$, so we may assume that $R$ is singular.
By \cite[5.15]{W} there is an ideal $J\subseteq\m$ with $\sing R=\v(J)$ and $J\Ext_R^{d+1}(\mod R,\mod R)=0$.
Since $R$ is a domain, we find an element $0\ne x\in J$.
The induction hypothesis guarantees $\syz_{R/xR}^{d-1}(\mod R/xR)\subseteq{|G|}_n^{R/xR}$ for some $R/xR$-module $G$ and an integer $n>0$.
Let $M$ be an $R$-module and put $N=\syz_R^dM$.
Note that $x$ is $N$-regular as $d>0$.
Hence $N$ is isomorphic to a direct summand of $\syz_R(N/xN)$ (cf. \cite[Lemma 2.1]{stcm}).
In view of Lemma \ref{5}, we have $N/xN\cong\syz_{R/xR}^{d-1}(\syz_RM/x\syz_RM)\in\syz_{R/xR}^{d-1}(\mod R/xR)\subseteq{|G|}_n^{R/xR}$.
Hence $N/xN$ is in $|G|_n^R$, which implies $\syz_R(N/xN)\in|\syz_RG|_n^R$.
Therefore $N$ belongs to $|\syz_RG|_n^R$, and we obtain $\syz^d(\mod R)\subseteq|\syz_RG|_n^R$.
It now follows that $\size\syz^d(\mod R)<\infty$.
\end{proof}

\begin{lem}\label{9}
Let $\A$ be an abelian category with enough projectives.
Let $0\to M\to C^0\to C^1\to\cdots\to C^{n-1}\to N\to 0$ be an exact sequence in $\A$ with $n\ge0$.
Then $M$ is in $|\syz^nN\oplus(\bigoplus_{i=0}^{n-1}\syz^iC^i)|_{n+1}$.
\end{lem}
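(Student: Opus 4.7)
The argument proceeds by induction on $n$. The case $n=0$ reduces immediately to $M\cong N=\syz^0 N\in |N|_1$.

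For $n\ge 1$, set $K=\mathrm{im}(C^0\to C^1)$, splitting the given sequence into a short exact sequence $0\to M\to C^0\to K\to 0$ and a shorter length-$(n-1)$ exact sequence $0\to K\to C^1\to\cdots\to C^{n-1}\to N\to 0$. Applying the inductive hypothesis to the latter yields $K\in |H|_n$ where $H:=\syz^{n-1}N\oplus\bigoplus_{i=0}^{n-2}\syz^i C^{i+1}$. From the first short exact sequence, taking a projective cover $P\twoheadrightarrow K$ (with kernel $\syz K$) and pulling back along $C^0\twoheadrightarrow K$ produces an SES $0\to\syz K\to M\oplus P\to C^0\to 0$ (its companion $0\to M\to E\to P\to 0$ splits since $P$ is projective).

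The key technical ingredient is an auxiliary claim: if $X\in |Y|_r$, then $\syz X\in |\syz Y|_r$, where syzygies are understood modulo projective summands, as is standard in an abelian category with enough projectives. I would prove this by a secondary induction on $r$: for $r=1$, syzygies commute with finite direct sums up to projective summands; for $r\ge 2$, apply the horseshoe lemma to a defining SES $0\to A\to E\to B\to 0$ expressing $X$ as a summand of $E$, and combine with the inductive hypothesis on $\syz A$ and $\syz B$.

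Applying this claim to $K\in |H|_n$ gives $\syz K\in |\syz H|_n$, where $\syz H=\syz^n N\oplus\bigoplus_{j=1}^{n-1}\syz^j C^j$ is a direct summand of $G:=\syz^n N\oplus\bigoplus_{i=0}^{n-1}\syz^i C^i$; hence $\syz K\in |G|_n$ and trivially $C^0\in |G|_1$. The SES $0\to\syz K\to M\oplus P\to C^0\to 0$ then places $M\oplus P$ in $|G|_n\ast |G|_1=|G|_{n+1}$, and $M\in |G|_{n+1}$ follows because $|G|_{n+1}$ is closed under direct summands. The main obstacle is the bookkeeping of projective summands introduced by the horseshoe lemma in the auxiliary claim, but these are harmlessly absorbed by the additive closures built into the $|\cdot|_r$ notation together with the convention that syzygies in $\A$ are defined only up to projective summand.
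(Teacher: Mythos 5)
Your proof is correct and follows essentially the same route as the paper's: the same splitting into $0\to M\to C^0\to L\to 0$ plus a shorter exact sequence, the same induction, and the same pullback producing $0\to\syz L\to M\oplus P\to C^0\to 0$. The only difference is that you explicitly isolate and justify (via the horseshoe lemma) the auxiliary fact that $X\in|Y|_r$ implies $\syz X\in|\syz Y|_r$, which the paper uses without comment.
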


\begin{proof}
We induce on $n$.
The case $n=0$ is trivial, so let $n\ge1$.
There are two exact sequences $0\to M\to C^0\to L\to 0$ and $0\to L\to C^1\to\cdots\to C^{n-1}\to N\to 0$.
The induction hypothesis shows $L\in|\syz^{n-1}N\oplus(\bigoplus_{i=0}^{n-2}\syz^iC^{i+1})|_n$.
A pullback diagram makes an exact sequence $0\to \syz L\to M\oplus R^\oplus \to C^0 \to 0$.
Since $\syz L$ belongs to $|\syz^nN\oplus(\bigoplus_{i=0}^{n-2}\syz^{i+1}C^{i+1})|_n$, we see that $M$ is in $|\syz^nN\oplus(\bigoplus_{i=0}^{n-1}\syz^iC^i)|_{n+1}$.
\end{proof}

\begin{cor}\label{100}
Let $R$ be a Cohen-Macaulay complete local ring with perfect coefficient field.
Then $\size\cm(R)<\infty$.
\end{cor}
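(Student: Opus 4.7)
The plan is to deduce the corollary from Theorem \ref{88} by constructing, for each maximal Cohen-Macaulay module $M$, a length-$d$ exact sequence that begins with $M$, has middle terms built from copies of the canonical module, and ends in a maximal Cohen-Macaulay module whose $d$-th syzygy is already governed by Theorem \ref{88}. Since $R$ is a Cohen-Macaulay complete local ring, it admits a canonical module $\omega$, and I would exploit $\omega$-duality to produce such a coresolution.

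First I would note that for any $M\in\cm(R)$ the module $M^{\vee}:=\Hom_R(M,\omega)$ is again maximal Cohen-Macaulay, the biduality map $M\to M^{\vee\vee}$ is an isomorphism, and $\Ext_R^i(M^\vee,\omega)=0$ for all $i>0$. Taking a free resolution $\cdots\to F_1\to F_0\to M^\vee\to 0$ and applying $\Hom_R(-,\omega)$, the Ext-vanishing makes the resulting complex exact, yielding an infinite coresolution $0\to M\to \omega^{r_0}\to \omega^{r_1}\to\cdots$. Truncating after $d=\dim R$ steps gives an exact sequence
\[ 0 \to M \to \omega^{r_0} \to \omega^{r_1} \to \cdots \to \omega^{r_{d-1}} \to L \to 0, \]
where $L=(\syz_R^d M^\vee)^\vee$ is maximal Cohen-Macaulay because $\syz_R^d M^\vee$ is.

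Next, applying Lemma \ref{9} to this length-$d$ coresolution yields
\[ M \in |\syz_R^d L \oplus \omega \oplus \syz_R\omega \oplus \cdots \oplus \syz_R^{d-1}\omega|_{d+1}. \]
By Theorem \ref{88} there exist a module $G$ and an integer $m$, depending only on $R$, with $\syz^d(\mod R)\subseteq|G|_m$, and in particular $\syz_R^d L\in|G|_m$. Setting $H:=G\oplus\omega\oplus\syz_R\omega\oplus\cdots\oplus\syz_R^{d-1}\omega$, the relation $|H|_a*|H|_b=|H|_{a+b}$ forces $|Y|_{d+1}\subseteq|H|_{m(d+1)}$ whenever $Y\subseteq|H|_m$, so $M\in|H|_{m(d+1)}$. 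Since the bound $m(d+1)$ is independent of $M$, this gives $\size\cm(R)\le m(d+1)-1<\infty$.

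The main obstacle will be the construction of the bounded coresolution itself: an arbitrary maximal Cohen-Macaulay module over a Cohen-Macaulay local ring need not embed in a free module, so there is no direct way to place $M$ at the left end of a free resolution in order to invoke Lemma \ref{9}. The role of $\omega$ is precisely to bypass this, since the MCM hypothesis on $M^\vee$ makes $\Hom_R(-,\omega)$ applied to its free resolution produce an exact $\omega$-coresolution of $M$; truncating at length $d$ then drops the tail into $\syz^d(\mod R)$, where Theorem \ref{88} takes over uniformly.
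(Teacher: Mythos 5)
Your proof is correct and follows essentially the same route as the paper's: a length-$d$ $\omega$-coresolution of $M$, Lemma \ref{9}, and Theorem \ref{88} combined via the relation $|H|_a*|H|_b=|H|_{a+b}$. The only difference is that you spell out the standard construction of the coresolution (applying $\Hom_R(-,\omega)$ to a free resolution of $\Hom_R(M,\omega)$ and using the vanishing of $\Ext_R^{>0}$ against $\omega$), which the paper leaves implicit.
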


\begin{proof}
As $R$ is complete, it admits a canonical module $\omega$.
Theorem \ref{88} implies $\size\syz^d(\mod R)=:n<\infty$, so we have $\syz^d(\mod R)\subseteq|G|_{n+1}$ for some $R$-module $G$.
Let $M$ be a Cohen-Macaulay $R$-module.
Then there exists an exact sequence $0\to M\to\omega^{\oplus s_0}\to\cdots\to\omega^{\oplus s_{d-1}}\to N\to0$.
It follows from Lemma \ref{9} that $M$ is in $|\syz^dN\oplus W|_{d+1}$, where $W:=\bigoplus_{i=0}^{d-1}\syz^i\omega$.
Since $\syz^dN\in|G|_{n+1}$, we have $M\in|G\oplus W|_{(n+1)(d+1)}$.
Thus $\size\cm(R)<(n+1)(d+1)<\infty$.
\end{proof}

\begin{prop}\label{12'}
Let $R$ be a Cohen-Macaulay local ring with a canonical module $\omega$.
\begin{enumerate}[\rm(1)]
\item
$\rank\cm(R)<(\dim R+1)(\size\cm(R)+1)$.
\item
$\dim\cm(R)<(\dim R+1)(\radius\cm(R)+1)$.
\end{enumerate}
In particular, one has
$$
\rank\cm(R)<\infty\Leftrightarrow\size\cm(R)<\infty\Rightarrow\dim\cm(R)<\infty\Leftrightarrow\radius\cm(R)<\infty.
$$
\end{prop}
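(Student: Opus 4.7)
The plan is to upgrade the possibly non-Cohen-Macaulay generator $G$ for $\cm(R)$ to a Cohen-Macaulay generator, using the canonical duality $(-)^\dagger=\Hom_R(-,\omega)$, Lemma \ref{9}, and a horseshoe-type lemma saying that taking $d$-th syzygies respects the $|\cdot|_k$-structure.

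For part (1), assume $n=\size\cm(R)<\infty$ and fix $G$ with $\cm(R)\subseteq|G|_{n+1}$. For any $M\in\cm(R)$, the module $M^\dagger$ is Cohen-Macaulay and so are all its syzygies $\syz^iM^\dagger$, hence $\Ext_R^{>0}(\syz^iM^\dagger,\omega)=0$ for every $i\ge0$. Therefore $\omega$-dualizing a free resolution of $M^\dagger$ remains exact, and truncating at length $d$ yields an exact sequence
\[
0\to M\to\omega^{s_0}\to\cdots\to\omega^{s_{d-1}}\to N\to 0
\]
with $N\cong(\syz^dM^\dagger)^\dagger$ Cohen-Macaulay. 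Applying Lemma \ref{9} then yields $M\in|\syz^dN\oplus W|_{d+1}$, where $\syz^dN$ and $W=\bigoplus_{i=0}^{d-1}\syz^i\omega$ are both Cohen-Macaulay.

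The key auxiliary claim is that $\syz^d$ sends $|G|_k$ into $|\syz^dG\oplus R|_k$. This follows by induction on $k$: the base case uses that syzygies commute with direct sums (and hence respect direct summands), while the inductive step applies the horseshoe lemma to a defining extension $0\to X\to E\to Y\to 0$, the free summands introduced by non-minimality of the horseshoe resolution being absorbed by the extra copy of $R$. Since $N$ is Cohen-Macaulay, $N\in\cm(R)\subseteq|G|_{n+1}$, so $\syz^dN\in|\syz^dG\oplus R|_{n+1}$. Setting $H=\syz^dG\oplus W\oplus R$, which is Cohen-Macaulay, we deduce $M\in|H|_{(d+1)(n+1)}$, so $\cm(R)\subseteq|H|_{(d+1)(n+1)}$; the reverse inclusion holds because $\cm(R)$ is closed under direct summands, direct sums, and extensions. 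Thus $\cm(R)=|H|_{(d+1)(n+1)}$, yielding the bound for $\rank\cm(R)$.

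Part (2) runs in parallel with $|\cdot|$ replaced by $[\cdot]$; here the syzygy transfer is simpler because $[G]_k$ is closed under $\syz$ and contains $R$ by definition, so $\syz^d[G]_k\subseteq[\syz^dG]_k$ and one may take $H=\syz^dG\oplus W$. The four equivalences then follow from (1), (2), the trivial inequalities $\radius\X\le\size\X$, $\radius\X\le\dim\X$, $\size\X\le\rank\X$, and the inequality $\dim\cm(R)\le\rank\cm(R)$ (from $\cm(R)$ being resolving). The main subtle point is the identification $N\cong(\syz^dM^\dagger)^\dagger$, which forces $N$ to be Cohen-Macaulay and hence to lie in $|G|_{n+1}$; without this we would only know $\depth N\ge0$, and could not apply the syzygy transfer to place $\syz^dN$ inside $|\syz^dG\oplus R|_{n+1}$.
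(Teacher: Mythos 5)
Your proof is correct and takes essentially the same route as the paper's: the paper likewise produces, for each $M\in\cm(R)$, the $\omega$-coresolution $0\to M\to\omega^{\oplus s_0}\to\cdots\to\omega^{\oplus s_{d-1}}\to N\to0$ with $N\in\cm(R)$ (as in Corollary \ref{100}), applies Lemma \ref{9}, and then uses the inclusion $\syz^dN\in|\syz^dG|_{n+1}$ to land in $|\syz^dG\oplus W|_{(n+1)(d+1)}$, with part (2) obtained by the same substitution of $[\ ]$ for $|\ |$. The only differences are that you spell out two steps the paper leaves implicit --- the identification $N\cong(\syz^dM^\dagger)^\dagger$ forcing $N$ to be Cohen--Macaulay, and the horseshoe-lemma verification that $\syz^d$ respects the $|\cdot|_k$-filtration (where your extra summand $R$ is harmless but unnecessary, since the free modules produced by the horseshoe lemma appear only in the middle term of the defining extension).
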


\begin{proof}
(1) Let $n=\size\cm(R)$.
We find an $R$-module $G$ with $\cm(R)\subseteq|G|_{n+1}$.
Let $d=\dim R$ and $M\in\cm(R)$.
Similarly to the proof of Corollary \ref{100}, there exists $N\in\cm(R)$ such that $M$ is in $|\syz^dN\oplus W|_{d+1}$, where $W:=\bigoplus_{i=0}^{d-1}\syz^i\omega\in\cm(R)$.
Note that $\syz^dN\in|\syz^dG|_{n+1}$ and that $N\in\cm(R)$.
Thus we obtain $\cm(R)=|\syz^dG\oplus W|_{(n+1)(d+1)}$, and $\rank\cm(R)<(n+1)(d+1)$.

(2) In the proof of (1), replace ``$\size$'', ``$\rank$'' and ``$|\ |$'' with ``$\radius$'', ``$\dim$'' and ``$[\ ]$'', respectively.
\end{proof}

\begin{thm}\label{13}
Let $R$ be a Cohen-Macaulay local ring admitting perfect coefficient field.
Assume that either of the following holds.
\begin{enumerate}[\rm(1)]
\item
$R$ is complete.
\item
$R$ is excellent with an isolated singularity.
\end{enumerate}
Then one has $\rank\cm(R)<\infty$.
\end{thm}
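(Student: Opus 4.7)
\emph{Paragraph 1 (Case (1)).} The complete case is essentially already done. Since $R$ is a Cohen--Macaulay complete local ring with perfect coefficient field, Corollary \ref{100} gives $\size\cm(R)<\infty$, and then Proposition \ref{12'}(1) converts this into $\rank\cm(R)<\infty$. So for case (1) there is nothing new to do.

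\emph{Paragraph 2 (Case (2), reduction to completion).} For the excellent isolated-singularity case, the plan is to reduce to case (1) via the $\m$-adic completion $\widehat R$. First I would verify that $\widehat R$ again satisfies the hypotheses of case (1): excellence of $R$ implies that the formal fibers of $R\to\widehat R$ are geometrically regular, so for any non-maximal prime $\q$ of $\widehat R$ the local ring $\widehat R_\q$ is regular (since $R_{\q\cap R}$ is, by the isolated singularity assumption). Hence $\widehat R$ is a complete Cohen--Macaulay local ring with isolated singularity, and its coefficient field, given by Cohen's structure theorem, is isomorphic to the residue field $R/\m$, which is perfect. Case (1) applied to $\widehat R$ therefore furnishes an $\widehat R$-module $H$ and an integer $n$ with $\cm(\widehat R)=|H|_{n+1}^{\widehat R}$.

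\emph{Paragraph 3 (Descent).} The remaining and main step is to descend the finite-rank generation from $\widehat R$ back to $R$. The natural vehicle is Artin--Elkik approximation, which for an excellent Henselian local ring $S$ with isolated singularity asserts that the completion functor $\cm(S)\to\cm(\widehat S)$ is essentially surjective (in fact an equivalence of additive categories). Applying this to the Henselization $R^h$ (which is excellent, Henselian, has isolated singularity, and satisfies $\widehat{R^h}=\widehat R$), one lifts the generator $H$ to a module $G^h\in\cm(R^h)$ with $\cm(R^h)=|G^h|_{n+1}^{R^h}$, the key point being that each extension step $0\to X\to E\to Y\to 0$ appearing in a chain $|H|_{n+1}^{\widehat R}$ is detected by a finite-length $\operatorname{Ext}^1$ group (since both ends are CM and the singular locus of $\widehat R$ is $\{\widehat\m\}$), which is insensitive to the faithfully flat extension $R^h\to\widehat R$. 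A further descent along the filtered colimit $R\to R^h$ of étale neighborhoods then produces $G\in\cm(R)$ defined over some étale neighborhood and of the form that recaptures $G^h$ after Henselization, whence $\cm(R)=|G|_{n+1}^R$.

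\emph{Main obstacle.} The essential difficulty is the third paragraph, namely controlling the number of extension steps under the descent $\widehat R\leadsto R^h\leadsto R$. Lifting a single object is the standard approximation theorem, but one must further check that the ``ball'' structure (direct summands of $n$-fold extensions) is preserved, for which the finite-length nature of the relevant $\operatorname{Ext}^1$ modules, guaranteed by the isolated singularity hypothesis, is exactly what saves the day.
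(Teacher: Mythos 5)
Case (1) of your plan is exactly the paper's argument (Corollary \ref{100} combined with Proposition \ref{12'}(1)), and your second paragraph is fine: $\widehat R$ is again a Cohen--Macaulay complete local ring with perfect coefficient field, so case (1) gives $\cm(\widehat R)=|H|_{n+1}$ for some $H$ and $n$. You have also correctly isolated the key idea for the descent, namely that the isolated singularity hypothesis makes the relevant $\Ext^1$ modules between Cohen--Macaulay modules have finite length, hence invariant under completion.

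The gap is in the last leg of your third paragraph. First, there is no reason for the generator to be \emph{extended} from $R$: the obstruction to descending a module from $R^h$ (or $\widehat R$) to $R$ is precisely the failure of Henselianness, and a module ``defined over some \'etale neighborhood $R'$ of $R$'' does not help, because $R'$ is only essentially of finite type over $R$ and its finitely generated modules are not finitely generated $R$-modules. What is true --- and what the paper uses, quoting \cite[Corollary 3.6]{stcm} --- is the weaker statement that the generator $C$ of $\cm(\widehat R)$ is a \emph{direct summand} of $\widehat G$ for some $G\in\cm(R)$; this already suffices to rewrite $\cm(\widehat R)=|\widehat G|_{n+1}$. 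Second, even with a generator of the form $\widehat G$ in hand, one must still descend the membership ``$\widehat M\in|\widehat G|_{n+1}$'' to ``$M\in|G|_{n+1}$''. The paper does this in two steps: an induction on the radius using your finite-length $\Ext^1$ observation shows that every $N\in|\widehat G|_m$ is a direct summand of $\widehat{M'}$ for some $M'\in|G|_m$, and then descent of direct-summand relations along completion (\cite[Lemma 5.7]{ddc}) yields $M\in|G|_{n+1}$ for each $M\in\cm(R)$. Your proposal is missing both the ``direct summand of a completion'' formulation of the lifting and this final summand-descent step; once the argument is recast in those terms, the detour through $R^h$ and the \'etale neighborhoods becomes unnecessary.
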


\begin{proof}
(1) This assertion follows from Corollary \ref{100} and Proposition \ref{12'}(1).

(2) It follows by (1) that $\cm(\widehat R)$ has finite rank, where $\widehat R$ denotes the completion of $R$.
One can prove that $\cm(R)$ also has finite rank, making an argument similar to \cite[Remark 6.5]{LR}:

Putting $n=\rank\cm(\widehat R)$, we have $\cm(\widehat R)=|C|_{n+1}$ for some $C\in\cm(\widehat R)$.
Since $R$ is Cohen-Macaulay, excellent and with an isolated singularity, we can apply \cite[Corollary 3.6]{stcm} to see that $C$ is isomorphic to a direct summand of $\widehat G$ for some $G\in\cm(R)$.
Hence the equality $\cm(\widehat R)=|\widehat G|_{n+1}$ holds.
We establish a claim:
\begin{claim*}
Let $m>0$.
For any $N\in|\widehat G|_m$ there exists $M\in|G|_m$ such that $N$ is isomorphic to a direct summand of $\widehat M$.
\end{claim*}
\noindent
To show this claim, we use induction on $m$.
As $R$ is an isolated singularity, for all $X,Y\in\cm(R)$ the $R$-module $\Ext_R^1(X,Y)$ has finite length.
Hence there are isomorphisms $\Ext_R^1(X,Y)\cong\Ext_R^1(X,Y)^{\widehat{\ \ }}\cong\Ext_{\widehat R}^1(\widehat X,\widehat Y)$, which imply that every short exact sequence $0\to \widehat Y\to E \to\widehat X\to0$ of $\widehat R$-modules is isomorphic to the completion of some short exact sequence $0\to Y\to E' \to X\to0$ of $R$-modules.
The claim follows from this.
Using this claim and \cite[Lemma 5.7]{ddc}, we observe that $\cm(R)=|G|_{n+1}$ holds.
Therefore $\rank\cm(R)\le n<\infty$.
\end{proof}

%%%%%%%%%%%%%%%%%%%%%%%%%%%%%%%%%%%%%%%%%%%%%%%%%%%%%%%%

\section{Some discussions and open questions}\label{appsec}

In this section we  relate our results to 
the uniform Auslander condition and discuss some open questions.
For a local ring $R$, Jorgensen and \c{S}ega \cite{JS} introduced the {\it uniform Auslander condition}:
$$
\begin{array}{rl}
\uac:\ & \text{There exists an integer $n$ such that for all $R$-modules $M,N$ with} \\
& \text{$\Ext^i_R(M,N)=0$ for all $i\gg0$ one has $\Ext_R^i(M,N)=0$ for all $i\ge n$}.
\end{array}
$$
It is known that this condition is satisfied if the local ring $R$ is a complete intersection, a Golod ring, a Gorenstein ring with $\mult R=\codim R+2$, or a Gorenstein ring with $\codim R\le4$.
Here $\mult R$ denotes the multiplicity of $R$.
These are proved in \cite[Proposition 1.4]{JS}, \cite[Theorem 4.7]{AB'}, \cite[Theorem 3.5]{HJ} and \cite[Theorem 3.4]{S'}, respectively.
More information can be found in \cite[Appendix A]{CH}.
On the other hand, there exists an example of a Gorenstein local ring which does not satisfy \uac; see \cite[Theorem in \S0]{JS}.

The result below says that over a Gorenstein local ring the condition \uac\ is closely related to the thickness of resolving subcategories of Cohen-Macaulay modules.

\begin{prop}\label{55}
Let $R$ be a Gorenstein local ring.
Assume every resolving subcategory of $\mod R$ contained in $\CM(R)$ is a thick subcategory of $\CM(R)$.
Then $R$ satisfies \uac.
\end{prop}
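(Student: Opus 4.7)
The plan is to exhibit $n_0 = \dim R + 1$ as a uniform bound for UAC. Given any pair of $R$-modules $M, N$ with $\Ext_R^i(M,N) = 0$ for $i \gg 0$, I would first replace $M$ by the $d$-th syzygy $K = \syz_R^d M$, where $d = \dim R$; this is maximal Cohen-Macaulay (or zero, which is trivial). Iterating the long exact sequence of Ext and using that free modules kill positive Ext yields $\Ext_R^{i+d}(M,N) \cong \Ext_R^i(K,N)$ for every $i \geq 1$, so the task reduces to showing that, for every Cohen-Macaulay $K$ with eventually vanishing $\Ext_R^\ast(K,N)$, the vanishing already holds in every positive degree.

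The key construction is, for each integer $n \geq 1$ and each fixed $N$, the subcategory
$$
\X_n = \{K \in \CM(R) \mid \Ext_R^i(K,N) = 0 \text{ for all } i \geq n\}.
$$
A direct long-exact-sequence check (combined with $\Ext_R^{\geq 1}(R,-) = 0$) shows that $\X_n$ contains $R$, is closed under direct summands, extensions, and syzygies, hence is a resolving subcategory of $\mod R$; by construction it sits inside $\CM(R) = \TR(R)$. The standing hypothesis then makes $\X_n$ a thick subcategory of $\CM(R)$, and Proposition \ref{12} upgrades thickness to closure under cosyzygies.

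The crux is now the sequence $0 \to K \to F \to \syz^{-1}K \to 0$ with $F$ free: for $K \in \X_n$ with $n \geq 2$, the long exact sequence together with $\Ext_R^{\geq 1}(F,N) = 0$ gives $\Ext_R^i(\syz^{-1}K,N) \cong \Ext_R^{i-1}(K,N)$ for $i \geq 2$. Cosyzygy-closure forces $\syz^{-1}K \in \X_n$, so the isomorphism yields $\Ext_R^j(K,N) = 0$ for $j \geq n-1$, i.e.\ $K \in \X_{n-1}$. A finite descending induction collapses every $\X_n$ to $\X_1$, so the chosen $K = \syz^d M$ lies in $\X_1$; unwinding the initial syzygy shift gives $\Ext_R^j(M,N) = 0$ for all $j \geq d+1$, which is UAC.

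The main obstacle I anticipate is the choice of resolving subcategory to feed into the hypothesis. The naive choice, the set of all $K \in \CM(R)$ with eventually vanishing Ext into $N$, is automatically closed under cosyzygies (shifting Ext indices merely reshuffles "eventually"), so the hypothesis produces no new information. The critical device is to record the actual vanishing threshold $n$ in the definition of $\X_n$, so that cosyzygy-closure translates into a \emph{strict} improvement of that threshold; once this is in place, the remaining steps are routine long-exact-sequence bookkeeping and the standard depth argument that passes between $M$ and its $d$-th syzygy.
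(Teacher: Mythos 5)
Your proof is correct and follows essentially the same route as the paper: both define a resolving subcategory of Cohen--Macaulay modules cut out by Ext-vanishing beyond a fixed threshold, invoke the hypothesis together with Proposition \ref{12} to obtain closure under cosyzygies, and then use cosyzygies to push the vanishing threshold down to $1$, yielding the uniform bound $\dim R+1$. The only difference is presentational: the paper applies $\syz^{-t}$ in one step to a single such subcategory and computes directly, whereas you descend one degree at a time through the chain $\X_n\subseteq\X_{n-1}$.
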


\begin{proof}
Let $t\ge0$ be an integer, and let $M,N$ be $R$-modules with $\Ext_R^i(M,N)=0$ for all $i>t$.
We define a subcategory $\X$ of $\mod R$ to consist of all Cohen-Macaulay $R$-modules $X$ satisfying $\Ext_R^i(X,N)=0$ for all $i>t$.
Then $\X$ is a resolving subcategory of $\mod R$ contained in $\CM(R)$.
By assumption, $\X$ is a thick subcategory of $\CM(R)$.
Set $d=\dim R$.
Since $\syz^dM$ is in $\X$, so is $\syz^{-t}\syz^dM$.
We have
$
\Ext_R^i(M,N) \cong\Ext_R^{i-d}(\syz^dM,N)\cong\Ext_R^{i-d}(\syz^t(\syz^{-t}\syz^dM),N)
 \cong\Ext_R^{i-d+t}(\syz^{-t}\syz^dM,N)=0
$
for all integers $i>d$.
\end{proof}

There is also a connection between thickness of resolving subcategories of totally reflexive modules and closure under $R$-duals.
Here we say that a subcategory $\X$ of $\mod R$ is {\em closed under $R$-duals} if for each module $M$ in $\X$ its $R$-dual $M^\ast$ is also in $\X$.

\begin{prop}\label{66}
\begin{enumerate}[\rm(1)]
\item
Let $R$ be local.
Let $\X$ be a resolving subcategory of $\mod R$ contained in $\TR(R)$.
If $\X$ is closed under $R$-duals, then $\X$ is a thick subcategory of $\TR(R)$.
\item
Let $R$ be a local hypersurface.
Then every resolving subcategory of $\mod R$ contained in $\CM(R)$ is closed under $R$-duals.
\end{enumerate}
\end{prop}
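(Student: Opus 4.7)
The two parts call for substantially different arguments, so I will handle them separately.

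For part (1), I will apply Proposition \ref{12} to reduce the problem to showing $\X$ is closed under cosyzygies. Given $M \in \X$ (which is totally reflexive), take the defining short exact sequence $0 \to M \to F \to \syz^{-1}M \to 0$ with $F$ free. Since $\syz^{-1}M$ is also totally reflexive, $\Ext_R^1(\syz^{-1}M, R) = 0$, so dualizing yields
$$
0 \to (\syz^{-1}M)^* \to F^* \to M^* \to 0.
$$
By hypothesis $M^* \in \X$, and since $F^*$ is free, the resolving property places $(\syz^{-1}M)^*$, which is the first syzygy of $M^*$ in this presentation (up to a free summand), in $\X$. Applying closure under duals once more, and using $\syz^{-1}M \cong (\syz^{-1}M)^{**}$ (total reflexivity), we conclude $\syz^{-1}M \in \X$.

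For part (2), let $M \in \X \subseteq \CM(R)$; since $\res M \subseteq \X$, it suffices to show $M^* \in \res M$. My first step is to reduce this to a statement about the Auslander transpose. Over any Gorenstein ring, dualizing a minimal free presentation $F_1 \to F_0 \to M \to 0$ and using $M \cong M^{**}$ produces the four-term exact sequence $0 \to M^* \to F_0^* \to F_1^* \to \tr M \to 0$, giving $M^* \cong \syz^2 \tr M$ up to free summand. Since $R$ is a hypersurface, matrix factorizations yield 2-periodicity of syzygies on MCM modules: $\syz^2 N \cong N$ up to free summand for every $N \in \CM(R)$. Applying this to $N = \tr M$ (itself MCM, being totally reflexive over a Gorenstein ring) gives $M^* \cong \tr M$ up to free summand, so it suffices to show $\tr M \in \res M$.

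This last reduction is the main obstacle. The cleanest route is via a classification result: since hypersurfaces are complete intersections, Corollary \ref{1559} identifies $\res M$ with the thick subcategory of $\CM(R)$ generated by $R$ and $M$, and over a hypersurface such thick subcategories are classified by specialization-closed subsets of $\sing R$ arising as unions of nonfree loci. Because the nonfree locus of an MCM module over a Gorenstein ring is preserved by $R$-duality (freeness is local, and $(-)^*$ sends free modules to free modules, while double-dualization recovers the module up to isomorphism), we have $\NF(\tr M) = \NF(M^*) = \NF(M)$. Hence $\tr M$ and $M$ generate the same thick subcategory, and in particular $\tr M \in \res M$. An alternative would be to construct $\tr M$ directly from the matrix factorization $(\varphi,\psi)$ of $M$ using iterated extensions and syzygies inside $\res M$, exploiting the explicit relation $\tr M = \coker(\bar\varphi^T)$, but such an explicit construction appears comparatively delicate to execute without passing through the classification.
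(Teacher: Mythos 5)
Your proof is correct. Part (1) is essentially the paper's argument read in the opposite direction: the paper dualizes the syzygy sequence of $X^*$ to exhibit $\syz^{-1}X$ as $(\syz(X^*))^*$ up to free summand, while you dualize the cosyzygy sequence of $M$ and then dualize back; both hinge on Proposition \ref{12}, the vanishing of $\Ext^1$ against $R$ for totally reflexive modules, and two applications of closure under duals. For part (2) you ultimately rest on the same key external input as the paper, namely the classification of resolving subcategories of $\mod R$ contained in $\CM(R)$ over a hypersurface by specialization-closed subsets of $\Sing R$ via nonfree loci (the Main Theorem of the cited paper on classifying thick subcategories of the stable category). The difference is that you first reduce $M^*$ to $\tr M$ through the four-term sequence $0\to M^*\to F_0^*\to F_1^*\to\tr M\to 0$ and two-periodicity of hypersurface syzygies; this detour is logically sound (all identifications hold up to free summands, which is harmless for membership in a resolving subcategory), but it is superfluous: since $\NF(M^*)=\NF(M)$ already, the classification applied directly to $M^*$ gives $M^*\in\NF^{-1}_{\mathrm{CM}}(W)=\X$ in one line, which is exactly what the paper does. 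The extra machinery buys nothing here, though it does no harm.
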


\begin{proof}
(1) According to Proposition \ref{12}, we have only to show that $\X$ is closed under cosyzygies.
Let $X\in\X$.
There is an exact sequence $0 \to \syz(X^*) \to F \to X^* \to 0$, where $F$ is free.
Dualizing this by $R$, we get an exact sequence
$
0 \to X \to F^* \to (\syz(X^*))^* \to 0.
$
Note that $(\syz(X^*))^*$ is totally reflexive.
We easily see that $(\syz(X^*))^*$ is isomorphic to $\syz^{-1}X$ up to free summand.
As $\X$ is a resolving subcategory closed under $R$-duals, $(\syz(X^*))^*$ belongs to $\X$, and so does $\syz^{-1}X$.

(2) It follows from \cite[Main Theorem]{stcm} that every resolving subcategory of $\mod R$ contained in $\CM(R)$ can be described as $\NF^{-1}_{\mathrm{CM}}(W)$, where $W$ is a specialization-closed subset of $\Spec R$ contained in $\Sing R$.
If $M$ is an $R$-module in $\NF^{-1}_{\mathrm{CM}}(W)$, then we have $\NF(M^*)=\NF(M)\subseteq W$, which shows that $\NF^{-1}_{\mathrm{CM}}(W)$ also contains $M^*$.
\end{proof}

Now we have reached the following question.

\begin{ques}\label{780845}
Let $R$ be a Gorenstein local ring.
Let us consider the following five conditions.
\begin{enumerate}[(1)]
\item
$R$ is a complete intersection.
\item
Every resolving subcategory of $\mod R$ contained in $\CM(R)$ is closed under $R$-duals.
\item
Every resolving subcategory of $\mod R$ contained in $\CM(R)$ is a thick subcategory of $\CM(R)$.
\item
$R$ satisfies \uac.
\item
Conjecture \ref{2} is true for $R$.
\end{enumerate}
We know that the implications $(2)\Rightarrow (3)$ and $(1) \Rightarrow (3) \Rightarrow (4)$ hold by Propositions \ref{55}, \ref{66}(1) and Corollary \ref{1559}.
The implication $(1)\Rightarrow(2)$ is also true if $R$ is a hypersurface by Proposition \ref{66}(2).
Very recently, motivated by the first version of the present paper, Stevenson \cite{Ste2} proved that the implication $(1)\Rightarrow(2)$ holds in the case where $R$ is a quotient of a regular local ring.
Corollary \ref{8} says that $(3) \Rightarrow (5)$ holds if $R$ is Gorenstein.
How about the other implications among these five conditions?
\end{ques}

\begin{rem}\label{r12}
According to a recent preprint by Stevenson \cite{S} (see also \cite{I}), if $R$ is a quotient of a regular local ring by a regular sequence, then one can classify the thick subcategories of $\uCM(R)$ in terms of ``support varieties".
Thus, one can also classify the resolving subcategories of $\mod R$ contained in $\CM(R)$ by using Corollary \ref{1559} and \cite[Proposition 6.2]{stcm}.
In relation to this, the resolving subcategories over a regular ring can be classified completely.
This classification theorem is stated and proved in \cite{LR2}.
\end{rem}

%%%%%%%%%%%%%%%%%%%%%%%%%%%%%%%%%%%%%%%%%%%%%%%%%%%%%%%%
\section*{Acknowledgments}
The authors would like to thank Luchezar Avramov, Jesse Burke, Craig Huneke, Osamu Iyama, Srikanth Iyengar and Greg Stevenson for their valuable comments.
This work was done during the visits of the second author to University of Kansas in May, July and August, 2011, and July, 2012.
He is grateful for their kind hospitality.
The authors also thank the referee for his/her careful reading and helpful comments.
%%%%%%%%%%%%%%%%%%%%%%%%%%%%%%%%%%%%%%%%%%%%%%%%%%%%%%%%

%%%%%%%%%%%%%%%%%%%%%%%%%%%%%%%%%%%%%%%%%%%%%%%%%%%%%%%%
\end{document}